\documentclass{amsart}

\usepackage[latin1]{inputenc}
\usepackage{harvard}
\usepackage{graphicx}
\usepackage{psfrag}
\usepackage{amsthm,amsfonts,amssymb,amscd,amsmath}

\newtheorem{theorem}{Theorem}[section]
\newtheorem{mainthm}{Main Theorem}
\newtheorem{oldthm}{Theorem}
\newtheorem{oldlemma}[oldthm]{Lemma}
\newtheorem{lemma}[theorem]{Lemma}
\newtheorem{prop}[theorem]{Proposition}
\newtheorem{corollary}[theorem]{Corollary}
\newtheorem{remark}[theorem]{Remark}

\newtheorem{definition}[theorem]{Definition}

\newcommand{\field}[1]{\mathbb{#1}}

\def\C{\mathbb{C}}

\def\fC{\field{C}}

\def\fN{\field{N}}

\def\fR{\field{R}}

\def\fZ{\field{Z}}

\def\cA{\mathcal{A}}
\def\cB{\mathcal{B}}
\def\cC{\mathcal{C}}
\def\cD{\mathcal{D}}
\def\cE{\mathcal{E}}

\def\cH{\mathcal{H}}

\def\cJ{\mathcal{J}}
\def\cL{\mathcal{L}}

\def\cM{\mathcal{M}}

\def\cR{\mathcal{R}}

\def\cP{\mathcal{P}}
\def\cS{\mathcal{S}}
\def\cU{\mathcal{U}}
\def\cW{\mathcal{W}}
\def\cV{\mathcal{V}}
\def\cX{\mathcal{X}}

\def\cY{\mathcal{Y}}

\def\bLambda{{\bf \Lambda_*}}
\def\blambda{{\boldsymbol \lambda_*}}
\def\bmu{{\boldsymbol \mu_*}}
\def\bF{{\bf F_*}}
\def\bG{{\bf G_*}}

\def\bs{{\bf s^*}}

\def\bT{{\bf T}}
\def\bW{{\bf W}}

\def\loc{{\rm loc}}

\newcommand{\converge}[1]{\stackrel{#1}{\longrightarrow}}
\def\HkG{{H_{k,G}}}
\def\iHkG{{\!\!\phantom{a}_iH_{k,G}}}
\def\hiHkG{{\!\!\phantom{a}_{\hat{i}}H_{k,G}}}

\newcommand{\bTi}[1]{{\bf T}_{[#1]}}
\newcommand{\bTis}[3]{{\bf T}_{[#1]_{#2}^{#3}}}
\newcommand{\TiG}[1]{T_{[#1],G}}
\newcommand{\Ti}[1]{T_{[#1]}}
\newcommand{\TisG}[3]{T_{[#1]_{#2}^{#3},G}}
\newcommand{\Tis}[3]{T_{[#1]_{#2}^{#3}}}

%\usepackage[dvips]{graphicx}

%\date{2009-03-31}

\begin{document}
\title[Dynamics of the Universal Area-Preserving Map: Stable Sets]{Dynamics of the Universal Area-Preserving Map Associated with Period Doubling: Stable Sets}
\author{Denis Gaidashev, Tomas Johnson}
\address{Department of Mathematics, Uppsala University, Box 480, 751 06 Uppsala, Sweden}
\email{{gaidash@math.uu.se},  {tomas.johnson@math.uu.se}}
\begin{abstract}
It is known that the famous Feigenbaum-Coullet-Tresser period doubling universality has a counterpart for area-preserving maps of ${\fR}^2$. A renormalization approach has been used in \cite{EKW1} and \cite{EKW2} in a computer-assisted proof of existence of a ``universal'' area-preserving map $F_*$ ---  a map with orbits of all binary periods $2^k, k \in \fN$.  In this paper, we consider {\it infinitely renormalizable} maps --- maps on the renormalization stable manifold in some neighborhood of  $F_*$ --- and study their dynamics. 

For all such infinitely renormalizable maps in a neighborhood of the fixed point $F_*$ we prove the existence of a ``stable'' invariant Cantor set  $\cC^\infty_F$ such that the Lyapunov exponents of $F \arrowvert_{\cC^\infty_F}$ are zero, and whose Hausdorff dimension satisfies
$${\rm dim}_H(\cC_F^{\infty}) < 0.5324.$$  

We also show that there exists a submanifold, $\bW_\omega$, of finite codimension in the renormalization local stable manifold, such that for all $F\in\bW_\omega$ the set $\cC^\infty_F$ is  ``weakly rigid'': the dynamics of any two maps in this submanifold, restricted to the stable set $\cC^\infty_F$, is conjugated by a bi-Lipschitz transformation that preserves the Hausdorff dimension.
\end{abstract}
\subjclass[2000]{37E20, 37F25, 37D05, 37D20, 37C29, 37A05, 37G15, 37M99}

\maketitle
\tableofcontents

%---------------------------------------------------------------------------------------------------------------------------------------------------------------------------------------

\section{Introduction}

Universality --- independence of the quantifiers of the geometry of orbits and bifurcation cascades in families of maps of the choice of a particular family  ---  has been abundantly observed in area-preserving maps, both as the period-doubling universality \cite{DP,Hel,BCGG,CEK2,EKW1,EKW2,GK1} and as the universality associated with the break-up of invariant surfaces  \cite{Shen,McK1,McK2,ME},  and in Hamiltonian flows \cite{ED,AK,AKW,Koch1,Koch2,Koch3,GK,Gai1,Kocic}. 

To prove universality one usually introduces a {\it renormalization} operator on a functional space, and demonstrates that this operator has a {\it hyperbolic fixed point}.  The renormalization approach to universality has been very successful in one-dimensional dynamics, and has led to explanation of universality in unimodal maps \cite{Eps1,Eps2,Lyu}, critical circle maps \cite{dF1,dF2,Ya1,Ya2} and holomorphic maps with a  Siegel disk \cite{McM,Ya3,GaiYa}. There is, however, at present  no deep understanding of universality in conservative systems, other than in the ``trivial'' case of the universality for systems ``near integrability'' \cite{Koch1,Koch2,Gai1,Kocic,KLDM}. 

It is worth noting that universality in conservative systems seems to be completely different from that in one-dimensional and dissipative maps. As it has been shown in \cite{CEK1,dCLM,LM}, the case of very dissipative systems is largely reducible to the one-dimensional Feigenbaum-Coullet-Tresser universality.

For families of area-preserving maps a universal infinite period-doubling cascade was observed by several authors in the early 80's \cite{DP,Hel,BCGG,Bou,CEK2}.  The existence of a hyperbolic fixed point for the period-doubling renormalization operator has been proved with computer-assistance in \cite{EKW2}. 

%%%%%%%%%%%%%%%%%%%%%%%%%%

In \cite{GJ} we used the method of covering relations (see, e.g. \cite{Z97,ZG04,KWZ07,Z09,CAPD}) in rigorous computations to construct hyperbolic sets for all maps in some neighborhood of the fixed point of the renormalization operator.  The Hausdorff dimension of these hyperbolic sets has been estimated with the help of the Duarte Distortion Theorem (see, e.g. \cite{Duarte1}) which enables one to use the distortion of a Cantor set to find bounds on its dimension. 

In this paper, we prove that {\it infinitely renormalizable} maps in a neighborhood of existence of the hyperbolic sets also admit  a ``stable'' set. This set is a bounded invariant set, such that the maximal Lyapunov exponent at any point of this set is zero.  Together with our result from \cite{GJ}, this demonstrates that for all reversible area-preserving infinitely renormalizable maps in some neighborhood of the renormalization fixed point, there are coexisting hyperbolic and stable sets.

We also address the issues of rigidity of the stable set and invariance of its Hausdorff dimension. Similar issues have been investigated in \cite{dCLM}  for attractors of very dissipative two-dimensional maps, where it has been shown that the regularity of conjugacy of attractors for two infinitely renormalizable maps $F$ and $\tilde{F}$ has a definite upper bound 
\begin{equation}\label{reg_bound}
\alpha \le {1 \over 2} \left(1 +\min\left\{ {\log {\rm Jac}(F)\over \log {\rm Jac}(\tilde{F}) }, {\log {\rm Jac}(\tilde{F})\over \log {\rm Jac}({F}) } \right\} \right),
\end{equation}
where $\rm Jac(F)$ is the ``average'' Jacobian of the map $F$. The authors of \cite{dCLM} put forward two questions: 1) whether the Hausdorff dimension of the attractor of an infinitely renormalizable map depends only on its average Jacobian, and 2) how regular is the conjugacy when ${\rm Jac}(F)={\rm Jac}(\tilde{F})$. In this regard, we obtain a partial result along similar lines in the ``extreme'' case of area-preserving maps (constant Jacobian equal to one): we prove that there exists a subset of locally infinitely renormalizable maps such the actions of any two maps from this subset on their stable sets are conjugate by a bi-Lipschitz map which preserves the Hausdorff dimension. 

We can not make a definite conclusion about whether this subset is equal to the whole set of locally infinitely renormalizable maps, or strictly smaller, because a sharp bound on the convergence rate of renormalizations of infinitely renormalizable maps is not known to date.

Finally, we provide an upper bound on the Hausdorff dimension of the stable set for all infinitely renormalizable maps.

\medskip

\section{Renormalization for area-preserving reversible maps} 

An ``area-preserving map'' will mean an exact symplectic diffeomorphism of a subset of ${\fR}^2$ onto its image.

Recall, that an area-preserving map can be uniquely specified by its generating function $S$:
\begin{equation}\label{gen_func}
\left( x \atop -S_1(x,y) \right) {{ \mbox{{\small \it  F}} \atop \mapsto} \atop \phantom{\mbox{\tiny .}}}  \left( y \atop S_2(x,y) \right), \quad S_i \equiv \partial_i S.
\end{equation}

Furthermore, we will assume that $F$ is reversible, that is 
\begin{equation}\label{reversible}
T \circ F \circ T=F^{-1}, \quad {\rm where} \quad T(x,u)=(x,-u).
\end{equation}

For such maps it follows from $(\ref{gen_func})$ that 
$$S_1(y,x)=S_2(x,y) \equiv s(x,y),$$
and
\begin{equation}\label{sdef}
\left({x  \atop  -s(y,x)} \right)  {{ \mbox{{\small \it  F}} \atop \mapsto} \atop \phantom{\mbox{\tiny .}}} \left({y \atop s(x,y) }\right).
\end{equation}

It is this ``little'' $s$ that will be referred to below as ``the generating function''. If the equation $-s(y,x)=u$ has a unique differentiable solution $y=y(x,u)$, then the derivative of such a map $F$ is given by the following formula:

\begin{equation}\label{Fder}
DF(x,u)=\left[ 
\begin{array}{c c}
-{s_2(y(x,u),x) \over s_1(y(x,u),x)} &  -{1 \over s_1(y(x,u),x)} \\
s_1(x,y(x,u))-s_2(x,y(x,u)) {s_2(y(x,u),x) \over s_1(y(x,u),x)}  & -{s_2(x,y(x,u)) \over s_1(y(x,u),x)} 
\end{array}
\right]. 
\end{equation}

The period-doubling phenomenon can be illustrated with the area-preserving H\' enon family (cf. \cite{Bou}) :
$$ H_a(x,u)=(-u +1 - a x^2, x).$$

Maps $H_a$ have a fixed point $((-1+\sqrt{1+a})/a,(-1+\sqrt{1+a})/a) $ which is stable for $-1 < a < 3$. When $a_1=3$ this fixed point becomes unstable, at the same time an orbit of period two is born with $H_a(x_\pm,x_\mp)=(x_\mp,x_\pm)$, $x_\pm= (1\pm \sqrt{a-3})/a$. This orbit, in turn, becomes unstable at $a_2=4$, giving birth to a period $4$ stable orbit. Generally, there  exists a sequence of parameter values $a_k$, at which the orbit of period $2^{k-1}$ turns unstable, while at the same time a stable orbit of period $2^k$ is born. The parameter values $a_k$ accumulate on some $a_\infty$. The crucial observation is that the accumulation rate
\begin{equation}
\lim_{k \rightarrow \infty}{a_k-a_{k-1} \over  a_{k+1}-a_k } = 8.721...
\end{equation} 
is universal for a large class of families, not necessarily H\'enon.

Furthermore, the $2^k$ periodic orbits scale asymptotically with two scaling parameters
\begin{equation}
\lambda=-0.249 \ldots,\quad \mu=0.061 \ldots
\end{equation}

To explain how orbits scale with $\lambda$ and $\mu$ we will follow \cite{Bou}. Consider an interval $(a_k,a_{k+1})$ of parameter values in a ``typical'' family $F_a$. For any value $\alpha \in (a_k,a_{k+1})$ the map $F_\alpha$ possesses a stable periodic orbit of period $2^{k}$. We fix some $\alpha_k$ within the interval $(a_k,a_{k+1})$ in some consistent way; for instance, by requiring that the restriction of $F^{2^{k}}_{\alpha_k}$ to a neighborhood of a stable periodic point in the $2^{k}$-periodic orbit is conjugate, via a diffeomorphism $H_k$, to a rotation with some fixed rotation number $r$.  Let $p'_k$ be some unstable periodic point in the $2^{k-1}$-periodic orbit, and let $p_k$ be the further of the two stable $2^{k}$-periodic points that bifurcated from $p'_k$.  Denote with $d_k=|p'_k-p_k|$, the distance between $p_k$ and $p'_k$. The new elliptic point $p_k$ is surrounded by invariant ellipses; let $c_k$ be the distance between $p_k$ and $p'_k$ in the direction of the minor semi-axis of an invariant ellipse surrounding $p_k$, see Figure \ref{bifGeom}. Then,
$${1 \over \lambda}=-\lim_{k \rightarrow \infty}{ d_k \over d_{k+1}},\quad
{\lambda \over \mu}=-\lim_{k \rightarrow \infty}{ \rho_k \over \rho_{k+1}}, \quad
{1 \over \lambda^2}=\lim_{k \rightarrow \infty}{ c_k \over c_{k+1}},
$$
where $\rho_k$ is the ratio of the smaller and larger eigenvalues of $D H_k(p_k)$. 

\begin{figure}[t]
\psfrag{p_k}{$p_k$}
\psfrag{p'_k}{$p'_k$}
\psfrag{c_k}{$c_k$}
\psfrag{d_k}{$d_k$}
\begin{center}
\includegraphics[width=0.7 \textwidth]{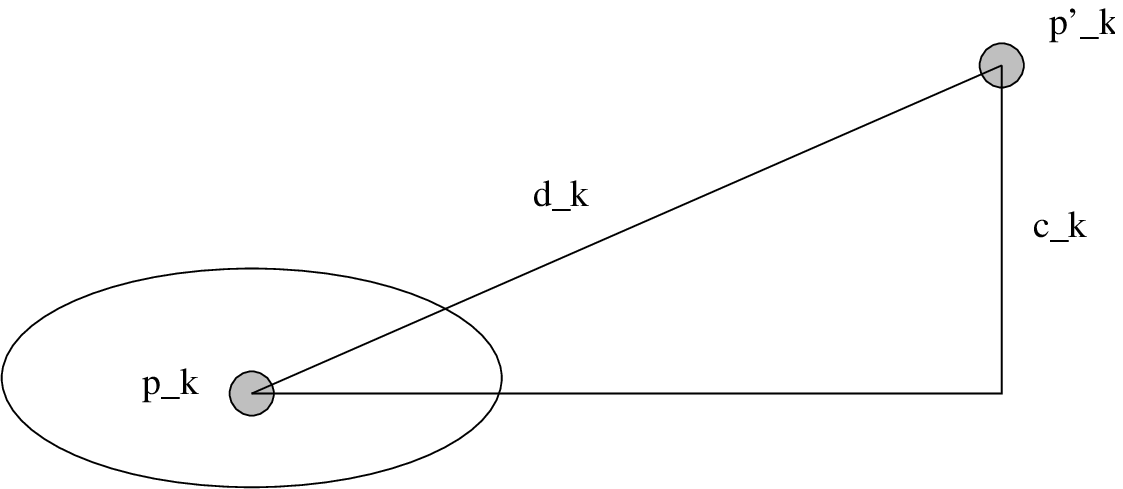}
\caption{The geometry of the period doubling. $p_k$ is the further elliptic point that has bifurcated from the hyperbolic point $p'_k$.}\label{bifGeom}
\end{center}
 \end{figure}

This universality can be explained rigorously if one shows that the {\it renormalization} operator
\begin{equation}\label{Ren}
R[F]=\Lambda^{-1}_F \circ F \circ F \circ \Lambda_F,
\end{equation}
where $\Lambda_F$ is some $F$-dependent coordinate transformation, has a fixed point, and the derivative of this operator is hyperbolic at this fixed point.

It has been argued in \cite{CEK2}  that $\Lambda_F$ is a diagonal linear transformation. Furthermore, such $\Lambda_F$ has been used in \cite{EKW1} and \cite{EKW2} in a computer assisted proof of existence of a reversible renormalization fixed point $F_*$ and hyperbolicity of the operator $R$. 

We will now derive an equation for the generating function of the renormalized map $\Lambda_F^{-1} \circ F \circ F \circ \Lambda_F$.

Applying a reversible $F$ twice we get
$$
 \left({x'  \atop  -s(z',x')} \right) {{ \mbox{{\small \it  F}} \atop \mapsto} \atop \phantom{\mbox{\tiny .}}} \left({z' \atop s(x',z')} \right) = \left({z'  \atop  -s(y',z')} \right) {{ \mbox{{\small \it  F}} \atop \mapsto} \atop \phantom{\mbox{\tiny .}}} \left({y'\atop  s(z',y')} \right).
$$

According to \cite{CEK2}  $\Lambda_F$ can be chosen to be a linear diagonal transformation:  

$$\Lambda_F(x,u)=(\lambda x, \mu u).$$

We, therefore, set  $(x',y')=(\lambda x,  \lambda y)$, $z'(\lambda x, \lambda y)= z(x,y)$ to obtain:

\begin{equation}\label{doubling}
\left(\!{x  \atop  -{ 1 \over \mu } s(z,\lambda x)} \!\right) \!{{ \mbox{{\small $\Lambda_F$}} \atop \mapsto} \atop \phantom{\mbox{\tiny .}}} \!\left(\!{\lambda x  \atop  -s(z,\lambda x)} \!\right) \!{{ \mbox{{\small \it  F $ \circ$ F}} \atop \mapsto} \atop \phantom{\mbox{\tiny .}}}\!\left(\!{\lambda y \atop s(z,\lambda y)}\! \right)   {{ \mbox{{\small \it  $\Lambda_F^{-1}$}} \atop \mapsto} \atop \phantom{\mbox{\tiny .}}} \left(\!{y \atop {1 \over \mu } s(z,\lambda y) }\!\right),
\end{equation}
where $z(x,y)$ solves
\begin{equation}\label{midpoint}
s(\lambda x, z(x,y))+s(\lambda y, z(x,y))=0.
\end{equation}

If the solution of $(\ref{midpoint})$ is unique, then $z(x,y)=z(y,x)$, and it follows from $(\ref{doubling})$ that the generating function of the renormalized $F$ is given by 
\begin{equation}
\tilde{s}(x,y)=\mu^{-1} s(z(x,y),\lambda y).
\end{equation}

One can fix a set of normalization conditions for $\tilde{s}$ and $z$ which serve to determine scalings $\lambda$ and $\mu$ as functions of $s$. For example, the normalization $s(1,0)=0$ is reproduced for $\tilde{s}$ as long as $z(1,0)=z(0,1)=1.$ In particular, this implies that $s(\lambda, 1)+s(0, 1)=0.$ Furthermore, the condition $\partial_1 s(1,0)=1$ is reproduced as long as $\mu=\partial_1 z (1,0).$

We will now summarize the above discussion in the following definition of the renormalization operator acting on generating functions originally due  to the authors of \cite{EKW1} and \cite{EKW2}:

\begin{definition}
\phantom{a}

\begin{eqnarray}\label{ren_eq}
\nonumber \\ {\cR}_{EKW}[s](x,y)=\mu^{-1} s(z(x,y),\lambda y),
\end{eqnarray}
where
\begin{eqnarray}
\label{midpoint_eq} 0&=&s(\lambda x, z(x,y))+s(\lambda y, z(x,y)), \\
0&=&s(\lambda,1)+s(0,1) \quad {\rm and} \quad \mu=\partial_1 z (1,0).
\end{eqnarray}
\end{definition}

\medskip

\begin{definition}
The Banach space of functions  $s(x,y)=\sum_{i,j=0}^{\infty}c_{i j} x^i y^j$, analytic on a bi-disk
$$ |x-0.5|<\rho, |y-0.5|<\rho,$$
for which the norm
$$\|s\|_\rho=\sum_{i,j=0}^{\infty}|c_{i j}|\rho^{i+j}$$
is finite, will be referred to as $\cA(\rho)$.

$\cA_s(\rho)$ will denote its symmetric subspace $\{s\in\cA(\rho) : s_1(x,y)=s_1(y.x)\}$.
\end{definition}

\medskip

As we have already mentioned, the following has been proved with the help of a computer in \cite{EKW1} and \cite{EKW2}:
\begin{oldthm}\label{EKWTheorem}
There exist a polynomial $s_{\rm a} \in \cA_s(\rho)$ and  a ball $\cB_r(s_{\rm a}) \subset \cA_s(\rho)$, $r=6.0 \times 10^{-7}$, $\rho=1.6$, such that the operator ${\cR}_{EKW}$ is well-defined and analytic on $\cB_r(s_{\rm a})$. 

Furthermore, its derivative $D {\cR}_{EKW} \arrowvert_{\cB_r(s_{\rm a})}$ is a compact linear operator, and has exactly two eigenvalues $\delta_1$ and $\delta_2$ of modulus larger than $1$, while 
$${\rm spec}(D {\cR}_{EKW} \arrowvert_{\cB_r(s_{\rm a})}) \setminus \{\delta_1,\delta_2 \} \subset \{z \in \C: |z| \le \nu \},$$ 
where 
\begin{equation}\label{contr_rate}
\hspace{3.0cm}\nu <  0.85.
\end{equation}

Finally, there is an $s^* \in \cB_r(s_{\rm a})$ such that
$$\cR_{EKW}[s^*]=s^*.$$
The scalings $\lambda^*$ and $\mu^*$ corresponding to the fixed point $s^*$ satisfy
\begin{eqnarray}
\label{lambda} \lambda_* \in [-0.24887681,-0.24887376], \\
\label{mu} \mu_* \in [0.061107811, 0.061112465].
\end{eqnarray}
 \end{oldthm}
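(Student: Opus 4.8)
The plan is to realize $\cR_{EKW}$ as an operator on the Banach space $\cA_s(\rho)$ with $\rho=1.6$ and to carry out a computer-assisted fixed-point and spectral analysis in rigorous (interval/ball) arithmetic. First I would verify that the implicit midpoint equation $(\ref{midpoint_eq})$ together with the stated normalizations $s(\lambda,1)+s(0,1)=0$ and $\mu=\partial_1 z(1,0)$ determines an analytic function $z(x,y)$ and scalings $\lambda$, $\mu$ as analytic functionals of $s$ on the ball $\cB_r(s_{\rm a})$. Concretely, I would exhibit $z(x,y)$ as the fixed point of the contraction obtained by solving $s(\lambda x,z)+s(\lambda y,z)=0$ for $z$ by Newton iteration, using the weighted $\ell^1$-norm $\|\cdot\|_\rho$ to control analyticity on the bi-disk $|x-0.5|<\rho$, $|y-0.5|<\rho$. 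The invertibility required is guaranteed by a lower bound on $\partial_2 s$ over the relevant domain, obtained from the explicit polynomial $s_{\rm a}$ plus the radius $r$. Composition of $F$ with itself, division by $\mu$, and the substitution $y\mapsto\lambda y$ then show that $\cR_{EKW}$ maps into $\cA_s(\rho)$ and is Fr\'echet-analytic on $\cB_r(s_{\rm a})$.

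Next I would locate the fixed point by a Newton--Kantorovich argument centered at the approximate solution $s_{\rm a}$. Decompose $\cA_s(\rho)=P\oplus Q$, where $P$ is the finite-dimensional span of Taylor monomials up to some order $N$ and $Q$ is its complement. On $P$ one computes a numerical approximation $A$ to the finite part of $D\cR_{EKW}(s_{\rm a})$ and an approximate inverse $M$ of $(A-\mathrm{Id})$; the quasi-Newton operator $N[s]=s-M(\cR_{EKW}[s]-s)$ is then shown to be a contraction on $\cB_r(s_{\rm a})$ by bounding the residual $\|N[s_{\rm a}]-s_{\rm a}\|_\rho$ and the Lipschitz constant of $DN$ on the ball, with rigorous arithmetic enclosing the truncation error coming from $Q$. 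The contraction mapping theorem then yields the unique $s^*\in\cB_r(s_{\rm a})$ with $\cR_{EKW}[s^*]=s^*$, and evaluating $\lambda$, $\mu$ at $s^*$ with the same enclosures produces the bounds $(\ref{lambda})$--$(\ref{mu})$ for $\lambda_*$ and $\mu_*$.

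For the spectral statement I would again split $D\cR_{EKW}(s^*)=L_{PP}+L_{PQ}+L_{QP}+L_{QQ}$ along $P\oplus Q$. The decisive quantitative estimate is that the tail blocks are small: since $\cR_{EKW}$ composes with the contracting scalings $\lambda_*\approx-0.249$ and $\mu_*^{-1}$, and since the weighted norm makes high-order coefficients cheap, one bounds $\|L_{QQ}\|+\|L_{QP}\|+\|L_{PQ}\|$ below $\nu<0.85$ once $N$ is chosen large enough. A Gershgorin/analytic-perturbation argument then confines the essential spectrum to $\{z\in\C:|z|\le\nu\}$, while the eigenvalues of the rigorously enclosed finite matrix $L_{PP}$ are computed by interval methods; verifying that exactly two of them, $\delta_1,\delta_2$, lie outside the unit disk and the remaining finitely many lie inside $\{|z|\le\nu\}$ completes the count. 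Compactness of $D\cR_{EKW}$ follows from the analyticity-improving nature of the composition operator, as $|\lambda_*|\rho<\rho$ strictly shrinks the domain.

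The hard part will be the spectral separation: producing a rigorous yet sufficiently sharp bound on the infinite-dimensional tail $L_{QQ}$ so that it provably falls below $0.85$, while simultaneously certifying that the finite block $L_{PP}$ contributes exactly two expanding eigenvalues and no eigenvalue sits on or near the unit circle. This demands both an a priori analytic estimate on how $\|\cdot\|_\rho$ controls the neglected coefficients and an eigenvalue enclosure robust near $|z|=1$. A secondary technical obstacle is the uniform lower bound on $\partial_2 s$ over $\cB_r(s_{\rm a})$ needed for unique solvability of $(\ref{midpoint_eq})$, since it underlies the well-definedness of the entire construction.
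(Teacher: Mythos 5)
The paper does not prove this theorem at all: it is imported verbatim as Theorem 1 from the computer-assisted work of Eckmann, Koch and Wittwer \cite{EKW1,EKW2}, so there is no internal proof to compare against. Your outline --- a Newton--Kantorovich (quasi-Newton contraction) argument centered at a numerically computed polynomial approximation in the weighted coefficient norm $\|\cdot\|_\rho$, combined with a finite-block-plus-small-tail decomposition of $D{\cR}_{EKW}$, interval-arithmetic eigenvalue enclosures, and compactness from the domain-shrinking scalings --- is essentially the strategy of that original computer-assisted proof, so your proposal is a faithful reconstruction of how the result is actually established.
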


\medskip
 \begin{remark}
The bound $(\ref{contr_rate})$ is not sharp. In fact, a (lower) bound on the largest eigenvalue of $D {\cR}_{EKW}(s_*)$, restricted to the tangent space of the stable manifold, is not known.
% of the contracting part of the spectrum ${\rm spec}(D {\cR}_{EKW}(s_*)  \setminus \{\delta_1,\delta_2 \}$ has been estimated in \cite{EKW2} to be $\nu=0.8$. 
 \end{remark}
\medskip

The interval enclosures of $\lambda_*$ and $\mu_*$ will be denoted 
\begin{eqnarray}
\label{blambda} \blambda &\equiv& [\lambda_-,\lambda_+]; \quad \lambda_-=-0.24887681, \quad \lambda_+=-0.24887376,\\
\label{bmu} \bmu & \equiv& [\mu_-,\mu_+]; \quad \mu_-=0.061107811, \quad \mu_+=0.061112465.
\end{eqnarray}

The corresponding interval enclosure for the linear map 
$$\Lambda_*\equiv \left[\lambda_* \ 0 \atop 0 \ \mu_* \right]$$ 
will be denoted $\bLambda$; if $(x,u) \in \fC^2$, then
\begin{equation}
 \bLambda(x,u) \equiv \left\{(\lambda x,\mu u) \in \fC^2: \lambda \in \blambda, \mu \in \bmu  \right\}. 
\end{equation}

The bound on the fixed point generating function $s^*$ will be called $\bs$:
\begin{equation}\label{bs}
\bs \equiv \left\{ s \in \cA(\rho) : \|s-s_{\rm a}\|_{\rho} \le  r=6.0 \times 10^{-7}  \right\},
\end{equation}
while the bound on the renormalization fixed point $F_*$ will be referred to as  $\bF$:
\begin{equation}\label{bF}
\bF \equiv \left\{ F: (x,-s(y,x)) \mapsto (y,s(x,y)) : s \in \bs \right\},
\end{equation}
the third iterate of this bound will be referred to as $\bG$.

It follows from  Theorem $\ref{EKWTheorem}$, that there exists a codimension $2$ local stable manifold $W^s_\loc(s^*)\subset \cB_r(s_{\rm a})$.

\begin{definition}\label{Wdef}
A reversible map $F$ of the form (\ref{sdef}) such that $s\in W^s_\loc(s^*)$ is called infinitely renormalizable. The set of all reversible infinitely renormalizable maps is denoted by $\bW$.
\end{definition}

\begin{definition}\label{Wdefr}
The set of reversible maps $F$ of the form (\ref{sdef})  with $s \in \cB_\varrho(s^*)$  will be referred to as $\bF(\varrho)$. Denote,
$$\bW(\varrho) \equiv \bW \cap \bF(\varrho).$$
\end{definition}

Naturally, $\bW(\varrho)$ is invariant under renormalization if $\varrho$ is sufficiently small.

Compactness of $D {\cR}_{EKW} \arrowvert_{\cB_r(s_{\rm a}) }$ implies that for any $\omega \le \nu$ there exists a submanifold $\bW_\omega \subset \bW$ (of finite codimension in $\bW$) such that $\|R^k[F]-F_*\|_\rho \le {\rm const} \, \omega^k$ for all $F \in \bW_\omega$.

\begin{definition}\label{Wdefo}
Define
$$\bW_\omega(\varrho)=\bW(\varrho) \cap \bW_\omega.$$
\end{definition}

\medskip

\section{Hyperbolic sets for maps in $\bF$}

In this Section we will recall  some of our results from the satellite paper \cite{GJ}.

We will start by introducing several classical definitions which will be helpful in understanding our Theorem $\ref{PIMTHM2}$ below.

\begin{definition} \label{hyperbolicset}
Let $\cM$ be a smooth manifold, and let $F$ be a diffeomorphism of an open subset $\cU \subset \cM$ onto its image.

An invariant set $\cC$ is called \underline{hyperbolic} for the map $F$ if there is a Riemannian metric on a neighborhood $\cU$ of $\cC$, and $\beta<1<\delta$, such that for any $p \in \cC$ and $n \in \fN$ the tangent space $T_{F^n(p)}\cU$ admits a decomposition in two equivariant subspaces:
$$T_{F^n(p)}\cM  =  E^+_n \oplus E^-_n, \quad 
DF(F^n(p)) E^\pm_n = E^\pm_{n+1},$$
on which the sequence of differentials is hyperbolic:
$$\|DF(F^n(p))\arrowvert_{E^-_n}\| < \beta, \quad \| DF^{-1} (F^n(p)) \arrowvert_{E^+_{n+1}} \| <  \delta^{-1}.$$

The hyperbolic set $\cC$ is called \underline{locally maximal},  if there is a neighborhood $\cV$ of $\cC$ such that  $\cC=\cap_{n \in \fZ} F^n(\bar{\cV})$.
\end{definition}

\medskip

\begin{definition} \label{shift}
Let  $\{0,1,\ldots,N-1\}^{\fZ}$ be the space of all two-sided sequences of $N$ symbols:
$$\{0,1,\ldots,N-1\}^{\fZ} = \{\omega=(\ldots,\omega_{-1},\omega_0,\omega_1,\ldots): \omega_i=\{0,1,\ldots,N-1\}, i\in\fZ   \},$$
Define the \underline{Bernoulli shift} on  $\{0,1,\ldots,N-1\}^{\fZ}$ as
$$\sigma_N (\omega)=\omega', \quad \omega'_n=\omega_{n+1}.$$
\end{definition}

\medskip

\begin{definition} \label{HDim}
Let $\cX$ be a metric space, $\cA \subset \cX$ and $d \in [0,\infty)$. Suppose that $\cB=\{ B_i \}$ is some cover of $\cA$ whose elements are open sets. We will denote
\begin{equation}\label{Cd}
C_d[\cB] \equiv \sum_i {\rm diam}(B_i)^d.
\end{equation} 

The $d$-dimensional \underline{Hausdorff content} of $\cA$ is defined as 
\begin{equation}\label{Hcont}
C_d^H[\cA]={\rm inf} \left\{C_d[\cB]: \cB \, {\rm is \, a \, cover \, of \, } \cA \right\}.
\end{equation}

The \underline{Hausdorff dimension} of $\cA$ is defined as 
\begin{equation}
{\rm dim}_H(\cA) = {\rm inf} \left\{d \ge 0: C_d^H[\cA]=0\right\}.
\end{equation}
\end{definition}

\medskip

In \cite{GJ} we have demonstrated that all maps in a neighborhood of the fixed point admit a hyperbolic set in their domain of analyticity. %These are the maps studied in the present paper. 

\begin{oldthm}\label{PIMTHM2}
 The following holds for all $F \in \bF$.

\medskip

\noindent i)  There exist connected open sets $\cD \subset \fC^2$ and $\cD_3 \subset \fC^2$ such that the maps $F$ and  $G \equiv F \circ F \circ F$ are analytic on $\cD$ and $\cD_3$, respectively.  

\medskip

\noindent ii)
The map $F$ possesses a hyperbolic fixed point $p_0=p_0(F) \in \cD$, such that
\begin{itemize}
\item[1)] $\cP_x p_0 \in ( 0.57761843, 0.57761989)$, and $\cP_u p_0=0$, where $\cP_{x,u}$ are projections on the $x$ and $u$ coordinates;
\item[2)] $D F(p_0)$ has two negative eigenvalues. 
\begin{eqnarray}
\nonumber e_+ &\in& (-2.05763559,-2.05759928),\\
\nonumber e_- &\in& (-0.48601715,-0.48598084).
\end{eqnarray} 
\end{itemize}

\medskip

\noindent iii) The map $G$ admits a locally maximal invariant hyperbolic set $\cC_G$: 
$$ \cC_G=\bigcap_{n \in \fZ} G^{-n}(\Delta),$$
and
$$G \arrowvert_{\cC_G} \, { \approx \atop  \mbox{{\small \it  homeo}}   }  \, \sigma_2 \arrowvert_{\{0,1\}^{\fZ}},$$ 
where $\Delta=\Delta_0 \cup \Delta_1$ and $\Delta_0 \subset \cD_3$, $\Delta_1 \subset \cD_3$ are compact sets, diffeomorphic to rectangles, with non-empty interior, that constitute a Markov partition for $G \arrowvert_{\cC_G}$.

Furthermore, the Hausdorff dimension of $\cC_G$ satisfies:
$$0.76594 > {\rm dim}_H(\cC_G) > \varepsilon,$$
where $\varepsilon \approx 0.00013 \, e^{-7499}$ is strictly positive.  

\medskip

\noindent iv) The local stable manifold $\cW_{\rm loc}^s(p_0)  \cap \Delta_0$ is a graph over the $x$-axis with the angle of the slope bounded away from $0$ and $\pi/2$.

\end{oldthm}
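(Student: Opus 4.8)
The plan is a computer-assisted argument resting on the method of covering relations \cite{Z97,ZG04,KWZ07,Z09,CAPD} together with rigorous interval arithmetic, carried out \emph{uniformly} over the entire set $\bF$ rather than for a single map. Each $F\in\bF$ is pinned down by a generating function $s\in\bs$, i.e.\ within $r=6.0\times 10^{-7}$ of the explicit polynomial $s_{\rm a}$, so every object below is assembled from the derivative formula (\ref{Fder}) and the implicit relation $-s(y,x)=u$. For (i), I would fix a candidate polydisc in $\fC^2$ and verify, by an interval-Newton (Krawczyk) contraction performed over all $s\in\bs$ at once, that $-s(y,x)=u$ has a unique analytic solution $y=y(x,u)$ there; this makes $F$ analytic on a set $\cD$, and propagating images through three compositions yields a domain $\cD_3$ on which $G=F\circ F\circ F$ is analytic. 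The relevant dynamics is that of the triple iterate $G$, whose homoclinic geometry supports a Markov partition into exactly two rectangles. For (ii) I would use reversibility (\ref{reversible}): a fixed point on the symmetry axis $u=0$ must satisfy the scalar equation $s(x,x)=0$, and an interval-Newton solve of this equation, again uniform over $s\in\bs$, locates $p_0$ with $\cP_u p_0=0$ and confines $\cP_x p_0$ to the stated enclosure. Evaluating $DF(p_0)$ from (\ref{Fder}) and invoking area preservation, $\det DF(p_0)=1$, the two eigenvalues are fixed by the trace alone; verifying the trace enclosure places them in the claimed negative intervals with $e_+e_-=1$, so $p_0$ is a flip saddle.

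The heart of (iii) is the Markov data $\Delta_0,\Delta_1$ and the verification of the four covering relations $\Delta_i\cover{G}\Delta_j$, $i,j\in\{0,1\}$. Realising $\Delta_0,\Delta_1$ as h-sets (topological rectangles equipped with prescribed exit/unstable and entry/stable directions), I would check by interval evaluation of $G$ on their boundaries that $G$ stretches each $\Delta_i$ all the way across each $\Delta_j$; the topological theory of covering relations then delivers $\cC_G=\bigcap_{n\in\fZ}G^{-n}(\Delta)$ together with a semi-conjugacy onto $\sigma_2$. To promote this to hyperbolicity and to a true conjugacy I would verify invariant cone conditions over $\bF$: a field of unstable cones carried into itself by $DG$ with uniform expansion, and stable cones carried into themselves by $DG^{-1}$ with uniform contraction. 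These cones furnish the equivariant splitting $E^+\oplus E^-$ of Definition \ref{hyperbolicset}, and uniqueness of admissible itineraries upgrades the semi-conjugacy to the homeomorphism $G|_{\cC_G}\approx\sigma_2$. Part (iv) is then immediate from the same stable-cone field: realising $\cW_{\rm loc}^s(p_0)$ as an invariant graph via the graph transform, the stable-cone bound forces its slope angle to stay strictly between $0$ and $\pi/2$ on $\Delta_0$.

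For the dimension bounds I would apply the Duarte Distortion Theorem \cite{Duarte1}. Locally $\cC_G$ is a product of a stable and an unstable Cantor set generated by the contracting and expanding branches of $G$; rigorous control over $\bF$ of the branch distortions (the variation of $DG$ along branches) and of the expansion/contraction ratios converts, through Duarte's estimates, into the explicit two-sided bounds $0.76594>{\rm dim}_H(\cC_G)>\varepsilon$.

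I expect the main obstacle to be \emph{uniformity over the infinite-dimensional ball} $\bF$: each covering relation, cone inclusion, and distortion bound must hold simultaneously for every $s\in\bs$, so the interval computations are run not over points of $\fC^2$ but over the functional neighbourhood $\bs$, with the attendant need to control the implicit solution $y(x,u)$ and three successive compositions at once. The second, purely quantitative, difficulty is that Duarte's lower estimate degrades severely, which is why the positive lower bound $\varepsilon\approx 0.00013\,e^{-7499}$ is astronomically smaller than the upper bound; sharpening it would require a finer distortion analysis than the theorem provides.
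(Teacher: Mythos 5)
Your proposal is correct and follows essentially the same route as the paper's: this theorem is not proved in the present paper but recalled from the satellite paper \cite{GJ}, whose method --- computer-assisted covering relations with invariant cone conditions, verified by interval computations uniformly over the functional ball $\bs$, combined with the Duarte Distortion Theorem for the two-sided Hausdorff-dimension bounds --- is exactly what you describe. Your supporting observations (reversibility placing $p_0$ on the symmetry axis $u=0$ via $s(x,x)=0$, and area preservation reducing the eigenvalue enclosures to a trace computation with $e_+e_-=1$) are likewise consistent with that approach.
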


\medskip

\begin{remark}
The bounds on the rectangles $\Delta_0$ and $\Delta_1$ of the Markov  partition for $\cC_G$ are given in Table $\ref{bigHorseshoe}$.  
\end{remark}

\medskip

One can  construct a convergent sequence of approximations of the hyperbolic  sets $\cC_G$ in a straightforward way. Define recursively:
\begin{equation}\label{UkG}
 \cU_G^1 \equiv G(\Delta) \cap G^{-1}(\Delta) \quad {\rm and} \quad \cU_G^k \equiv   G(\cU_G^{k-1}) \cap G^{-1} (\cU_G^{k-1}).
\end{equation}  

Each of the sets $\cU_G^k$ contains $2 \cdot 4^k$ components 
$\cU_G^{k,n}$, $n=1..2 \cdot 4^k$. The following Lemma has been proved in \cite{GJ}.

\begin{oldlemma}\label{lMSBds}
Let 
$$\rho_{k,n}={\rm sup}_{B_\rho \subset \cU_G^{k,n}} (\rho),$$
and set $\rho_k=\min_n \{ \rho_{k,n} \}$. There exist constants $C>0$ and $c>0$ such that
$${\rm diam}\left(\cU_G^{k,n}\right) \le C \kappa_+^k, \quad  \rho_k \ge  c \, \kappa_-^k,$$
where $\kappa_-=0.0371$ and $\kappa_+=0.1642$.
\end{oldlemma}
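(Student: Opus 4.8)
The plan is to prove both bounds by induction on $k$, using the recursive definition $(\ref{UkG})$ together with the uniform hyperbolicity supplied by Theorem $\ref{PIMTHM2}$. First I would record the combinatorial/geometric structure of the components. Each $\cU_G^{k,n}$ is the set of points $p$ whose orbit segment satisfies $G^j(p)\in\Delta_{\omega_j}$ for $-k\le j\le k$, for a word $(\omega_{-k},\dots,\omega_k)\in\{0,1\}^{2k+1}$; this accounts for the $2\cdot 4^k=2^{2k+1}$ components and writes $\cU_G^{k,n}=V^n_+\cap V^n_-$, where $V^n_+=\bigcap_{j=0}^k G^{-j}(\Delta_{\omega_j})$ is a forward tube (thin in the unstable direction, crossing $\Delta$ in the stable direction) and $V^n_-=\bigcap_{j=0}^k G^{j}(\Delta_{\omega_{-j}})$ is the corresponding backward tube (thin in the stable direction).

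The computational core is to fix unstable and stable cone fields $C^u,C^s$ over $\Delta_0,\Delta_1$ and, using the rigorous interval enclosures of $DG$ and $DG^{-1}$ from \cite{GJ}, verify that $C^u$ is $DG$-invariant and $C^s$ is $DG^{-1}$-invariant, with per-step rate bounds
$$\kappa_-\ \le\ \|DG^{-1}v\|/\|v\|\ \le\ \kappa_+\quad(v\in C^u),\qquad \kappa_-\ \le\ \|DG\,w\|/\|w\|\ \le\ \kappa_+\quad(w\in C^s).$$
Because $G$ is area preserving, these two estimates are two sides of the same coin, but the margin between $\kappa_-$ and $\kappa_+$ is precisely what absorbs the variation of the rates across $\Delta$ and across the different words. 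The cone invariance also forces the stable/unstable boundaries of the tubes $V^n_\pm$ to be graphs of uniformly Lipschitz functions, so that the tubes have bounded geometry independently of $k$.

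With these estimates in hand the induction is short. Let $a_k$ denote the unstable and $b_k$ the stable extent of a component. Applying $G$ once multiplies the stable extent by a factor in $[\kappa_-,\kappa_+]$ and expands the unstable one, while applying $G^{-1}$ once multiplies the unstable extent by a factor in $[\kappa_-,\kappa_+]$ and expands the stable one; the intersection in $(\ref{UkG})$ keeps the \emph{smaller} extent in each direction. Hence $a_k,b_k\in[c\,\kappa_-^k,\,C\,\kappa_+^k]$. Since ${\rm diam}(\cU_G^{k,n})$ is comparable to $\max(a_k,b_k)$, this yields ${\rm diam}(\cU_G^{k,n})\le C\kappa_+^k$; and since the bounded-slope (cone) condition guarantees that the quadrilateral $V^n_+\cap V^n_-$ contains a disk of radius comparable to $\min(a_k,b_k)$, it yields $\rho_{k,n}\ge c\,\kappa_-^k$ uniformly in $n$, hence $\rho_k\ge c\,\kappa_-^k$.

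I expect the main obstacle to be the computational one: producing rate enclosures $\kappa_\pm$ that are valid on the whole region swept out by the components, and, more delicately, controlling distortion so that the constants $C$ and $c$ can be chosen independent of $k$ — this bounded-geometry statement is the crux of turning the cone estimates into clean width bounds. The transversality of the horizontal tube $G(\cU_G^{k-1})$ against the vertical tube $G^{-1}(\cU_G^{k-1})$, which is what makes each intersection a genuine, uniformly fat quadrilateral, is exactly what the covering-relations apparatus of \cite{GJ} provides, so it is there that the geometric part of the argument is cashed in.
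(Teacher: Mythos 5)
This lemma is not proved in the body of the present paper at all: it is quoted from the satellite paper \cite{GJ}, where it is established by essentially the machinery you describe --- covering relations and invariant cone fields with rigorous interval enclosures of $DG$, the constants $\kappa_\pm$ being computer-verified per-step rate bounds that are then compounded along the recursion $(\ref{UkG})$. So your overall strategy (cone-field rate enclosures; an induction in which a component of $\cU_G^k$ is the transverse intersection of one thin component of $G(\cU_G^{k-1})$ with one thin component of $G^{-1}(\cU_G^{k-1})$, inheriting the smaller extent in each direction; transversality to produce the inscribed disk) is the right one and matches the source. Your remark that defining $\kappa_\pm$ as enclosures over whole cones, rather than over the exact stable/unstable directions, is what prevents per-step constants from accumulating is also exactly the point: with that convention the constants $C$ and $c$ enter only once, when extents are converted into a diameter bound and an inscribed-disk bound.

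One detail of your combinatorial setup is incorrect, though harmlessly so. The recursion $\cU_G^k=G(\cU_G^{k-1})\cap G^{-1}(\cU_G^{k-1})$ does \emph{not} constrain all iterates $G^j(p)$ for $-k\le j\le k$: unfolding it gives $\cU_G^k=\bigcap_{j\in S_k}G^{-j}(\Delta)$ with $S_k=\{j:\,|j|\le k,\ j\equiv k\ (\mathrm{mod}\ 2)\}$, so only times of the same parity as $k$ are constrained (already $\cU_G^1=G(\Delta)\cap G^{-1}(\Delta)$ does not require $p\in\Delta$). The indexing of components by full words $(\omega_{-k},\dots,\omega_k)$, hence the count $2\cdot 4^k$, comes from geometric disconnection --- each component of $G(\cU_G^{k-1})$ crosses each compatible component of $G^{-1}(\cU_G^{k-1})$ in exactly one quadrilateral --- and not from pointwise constraints at every time; one must also check (and this is part of the rigorous computation) that no spurious components arise in the folds outside $\Delta$. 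Consequently a component is in general slightly larger than the fully constrained cylinder $\bigcap_{j=-k}^{k}G^{-j}(\Delta_{\omega_j})$ that you write down, and the two sets should not be identified. Your induction survives unchanged, because it uses only the strip-intersection structure of the recursion and never the symbolic identity; but the bookkeeping sentence should be corrected accordingly.
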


\medskip

In this paper we will complement Theorem $\ref{PIMTHM2}$, and show that the third iterate of $F$ also supports a stable set in its domain of analyticity.

\begin{table}[t]
\begin{center}
\begin{footnotesize}
\begin{tabular}{c|ccc}
Component & Centre & ``Stable'' Scale & ``Unstable'' Scale\\ \hline
$\Delta_0'$ & $(0.670198,  0.0)$ & $0.083$ & $0.083$ \\
$\Delta_1'$ & $(-0.441811, 0.0)$ & $0.0655$ & $0.0655$
\end{tabular}
\begin{center}
\caption{The rectangles that approximate the Markov partition for the horseshoe of $G$. The rectangles are spanned by vectors  ${\bf e}^s_0=(0.788578889012330, -0.614933602760558)$,  ${\bf e}^u_0=T({\bf e}^s_0)$  and ${\bf e}^s_1=(0.750925931392967773, 0.660386436536671957)$,  ${\bf e}^u_1=T({\bf e}^s_1)$, respectively.  The length of the sides of the rectangles $\Delta_0'$ and $\Delta_1'$ is $2 \cdot {\rm stable/unstable \quad \!\! scale} \cdot |{\bf e}^{u,s}_{0,1}|$.}\label{bigHorseshoe}
%The rectangles that approximate the Markov partition for the horseshoe of $\bG$. The spanning vectors are approximate stable and unstable vectors for $D\bG(\bp_{0,1})$. The ``stable'' vectors for $\bp_0$ and $\bp_1$, respectively, are ${\bf e}^s_0=(0.788578889012330, -0.614933602760558)$ and ${\bf e}^s_1=(0.750925931392967773, 0.660386436536671957)$. The ``unstable'' vectors are ${\bf e}^u_0=T({\bf e}^s_0)$ and  ${\bf e}^u_1=T({\bf e}^s_1)$.  The length of the sides of the rectangles $\Delta_0'$ and $\Delta_1'$ is $2 \cdot {\rm stable/unstable \quad \!\! scale} \cdot |{\bf e}^{u,s}_{0,1}|$. }
\end{center}
\end{footnotesize}
\end{center}
\end{table}

\medskip

\section{Statement of results}

Recall, that a map $\cH: \cX \rightarrow \cY$ between two metric spaces $\cX$ and $\cY$ is called {\it bi-Lipschitz}, if there is a constant $C \ge 1$, such that for any two points $p$ and $q$ in $\cX$
\begin{equation}\label{bi-Lip}
{1 \over C} \, {\rm dist}_{\cX}(p,q) \le {\rm dist}_{\cY}(\cH(p),\cH(q)) \le C \, {\rm dist}_{\cX}(p,q),
\end{equation} 
i.e. if ${\rm dist}_{\cX}(p,q)$ and ${\rm dist}_{\cY}(\cH(p),\cH(q))$ are {\it commensurate}. We denote commensurability of two quantities by ``$\asymp$''.

A classical result from analysis states that such maps preserve the Hausdorff dimension.

Consider the dyadic group,
\begin{equation}\label{dyadic_group}
\{0,1\}^\infty=\underleftarrow{\lim}\{0,1\}^n,
\end{equation}
where $\underleftarrow{\lim}$ stands for the inverse limit. An element $w$ of the dyadic group can be represented as a formal power series $w \rightarrow \sum_{k=0}^\infty w_{k+1} 2^k$. The {\it odometer}, or the {\it adding machine}, $p: \{0,1\}^\infty \rightarrow  \{0,1\}^\infty$ is the operation of adding $1$ in this group. 

We are now ready to state our main theorem. 

\begin{mainthm}\label{MTHM2}
There exists $\varrho>0$ such that any $F \in \bW(\varrho)$   admits a ``stable'' Cantor set $\cC_F^{\infty} \subset \cD$,  that is the set on which the  maximal Lyapunov exponent is equal to zero, with the following properties.

\begin{itemize}

\item[1)] The set $\cC^\infty_F$ is a Hausdorff limit of invariant hyperbolic sets of vanishing hyperbolicity.

\item[2)] The Hausdorff dimension of $\cC^\infty_F$ satisfies
$${\rm dim}_H(\cC_F^{\infty}) \le 0.5324.$$

\item[3)] The restriction of the dynamics $F \arrowvert_{\cC^\infty_F}$ is topologically conjugate to the adding machine.

\item[4)]  If
$$\omega \le \min\left\{{\mu_* \over |\lambda_*|},{\inf_{p \in \cD, v \in \field{R}^2} \Arrowvert \Lambda_* \cdot D F_*(p) \cdot v \Arrowvert \over \sup_{p \in \cD, v \in \field{R}^2} \Arrowvert \Lambda_* \cdot D F_*(p) \cdot v \Arrowvert }  \right\},$$
then for all  $F$ and $\tilde{F}$  in $\bW_\omega(\varrho)$
$${\rm dim}_H(\cC_F^{\infty}) = {\rm dim}_H(\cC_{\tilde{F}}^{\infty}),$$
and 
$$F \arrowvert_{\cC_F^{\infty}}  \, { \approx \atop { \cH} }    \, \tilde{F} \arrowvert_{\cC_{\tilde{F}}^{\infty}},$$
where $\cH$ is  a bi-Lipschitz map.
\end{itemize}
 
\end{mainthm}
 
\begin{remark}
We have obtained the following rigorous computer bound
$${\inf_{p \in \cD, v \in \field{R}^2} \Arrowvert \Lambda_* \cdot D F_*(p) \cdot v \Arrowvert \over \sup_{p \in \cD, v \in \field{R}^2} \Arrowvert \Lambda_* \cdot D F_*(p) \cdot v \Arrowvert } \le 0.0581,$$
which is clearly dependent on our choice of coordinates for $F$'s. At the same time,  $\omega \le \mu_* / |\lambda_*| <0.246$ (independently of the choice of a coordinate system in which maps $F$ are considered), while $\nu$, the renormalization convergence rate on $\bW(\varrho)$, is less than $0.85$. Therefore, it seems likely that the submanifold  $\bW_\omega(\varrho) \ne  \bW(\varrho)$.
\end{remark}

%--------------------------------------------------------------------------------------------------------------------------------------------------------------------------------------

\medskip

\section{Some notation and definitions}

We will use the following notation for the sup norm of a function $h$ and a transformation $H$ defined on 
some set $\cS \subset \fR^2 \! \quad {\rm or} \quad \! \fC^2$:
\begin{eqnarray}
\arrowvert h \arrowvert_\cS \equiv \sup_{(x,u) \in \cS }\{ |h| \},\\
\arrowvert H \arrowvert_\cS \equiv \max \{\sup_{(x,u) \in \cS }\{|\cP_x H| \}, \sup_{(x,u) \in \cS}\{|\cP_u H| \} \},
\end{eqnarray}
where $\cP_x$ and $\cP_u$ are projections on the corresponding components. We will also use the notation $| \cdot |$ for the $l_2$ norm for vectors in $\fR^2$.

%The contraction rate on the local stable manifold $\cW^s \cap \cB_r(F^*)$, $r= $, will be denoted $\nu=0.55$.
%\begin{equation}\label{cont-rate}
%\nu \equiv  
%\end{equation}

 With $D\bG: p \mapsto D\bG(p)$ we denote an interval matrix valued function such that 
$$
[DG(p)]_{ij} \in [D\bG(p)]_{ij}, \quad \textrm{for all } G\in \bG,\,p\in\cD_3,
$$
where $\cD_3$ is the domain of $\bG$, and the bound on the operator norm of $DG$ for $G=F \circ F \circ F$, $F\in \bF$ on a set $\cS$ will be denoted
$$\| D\bG \|_\cS \equiv \sup_{F \in \bF }\left\{ \| D (F \circ F \circ F) \|_{\cS}\right\}.$$
%where $s_{\rm a}$ is as in Theorem $\ref{EKWTheorem}$.

%Given a non-empty open set $\cD \subset \fC^n$ we will denote by $\cO_R(\cD)$ the set of reversible maps $F: \cD \mapsto \fC^n$, analytic on $\cD$.

We will also use the following abbreviations for maps, transformations and scalings
\begin{eqnarray}
G_j &\equiv & R^j[G],\\
\Lambda_{k,G} & \equiv & \Lambda_G \circ \Lambda_{R[G]} \circ \ldots \Lambda_{R^{k-1}[G]}= \Lambda_{G_0} \circ \Lambda_{G_1} \circ \ldots \circ \Lambda_{G_{k-1}},\\
\lambda_{k,G} &\equiv & \lambda_{G_0}  \lambda_{G_1}  \ldots \lambda_{G_{k-1}},\\
\mu_{k,G} &\equiv & \mu_{G_0}  \mu_{G_1}  \ldots \mu_{G_{k-1}}.
\end{eqnarray}

\medskip

\section{A stable invariant set as a Hausdorff limit of hyperbolic sets}

According to Theorem $\ref{PIMTHM2}$ the fixed point $F_*$ possesses a hyperbolic set for its third iterate. By the stability property of such sets, there exists a neighborhood 
$\cB_{r'}(s^*)$ such that all maps $F$ of the form $(\ref{sdef})$ with $s \in \cB_{r'}(s^*)$ also have a hyperbolic set  $\cC_G$ for $G \equiv F \circ F \circ F$, and the action of $G$ on $\cC_G$ is topologically conjugate to that of $G_* \equiv F_* \circ F_* \circ F_*$ on $\cC_*$: 
$$G \circ H_G = H_G \circ G_* \arrowvert_{\cC_* }. $$

In what follows, we consider maps $F \in \bW(\varrho)$ (see Definition $\ref{Wdefr}$) where 
$$\varrho \le  \min\{ r',r\}.$$
The following holds on $\Lambda^k_*(\cC_*) $ for all $F \in \bW(\varrho)$:
\begin{eqnarray}
 \nonumber \Lambda_{k,G} \circ  H_{G_k} \circ \Lambda^{-k}_* &= & \Lambda_{k,G} \circ  G^{-1}_k \circ 
H_{G_k}  \circ G_* \circ \Lambda^{-k}_* \\
\nonumber &= & \Lambda_{k,G} \circ  G^{-1}_k \circ  \Lambda_{k,G}^{-1} \circ  \Lambda_{k,G}  \circ
H_{G_k} \circ \Lambda^{-k}_* \circ \Lambda_*^k  \circ G_* \circ \Lambda^{-k}_* \\
\nonumber &= & G^{-2^k} \circ  \Lambda_{k,G}  \circ H_{G_k} \circ \Lambda^{-k}_* \circ G_*^{2^k}.
\end{eqnarray}
Therefore, the transformation
$$\HkG =\Lambda_{k,G} \circ H_{G_k} \circ \Lambda_*^{-k}$$
is a topological conjugacy of iterates $G^{2^k}$ and $G_*^{2^k}$ on  $\Lambda^k_*(\cC_*)$:

Define
%$$\phantom{a}_0\HkG  \equiv  \HkG, \quad \iHkG  \equiv G^{i}\circ \!\!\!\phantom{a}_0\HkG\circ G^{-i}_*, \quad 1 \le i \le 2^k-1.$$
$$\iHkG  \equiv G^{i}\circ \HkG\circ G^{-i}_*, \quad 0 \le i \le 2^k-1.$$

Clearly,
\begin{equation}\label{Gconj}
G^{2^k} \circ \!\!\iHkG = \!\! \iHkG \circ G^{2^k}_* \quad {\rm and} \quad 
G \circ \!\!\iHkG=\!\!\! \phantom{a}_{i+1}\HkG \circ G_*.
\end{equation}
on $\cC_*^{k,i} \equiv \cC_{G_*}^{k,i}$, where
\begin{eqnarray}
\nonumber \cC_G^{k,i} &\equiv& G^i (\HkG (\Lambda_*^k(\cC_*))) \\
\nonumber &=&G^i (\Lambda_{k,G} ( H_{G_k}(\cC_*))) \\
\nonumber &=& G^i (\Lambda_{k,G} ( \cC_{G_k}))\\
 &=& \!\!\iHkG (G_*^{i}(\Lambda_*^k(\cC_*))).
\end{eqnarray}

Also, define the following sequence of sets

\begin{equation} \label{Gk}
\cC^k_G \equiv  \bigcup _{i=0}^{2^k-1} \cC_G^{k,i},
\end{equation}
and their covers  $\cV_G^k \supset \cC_G^k$:  
$$ \cV_G^k\equiv   \bigcup_{i=0}^{2^k-1} \cV_G^{k,i},$$
where 
$$\cV_G^{k,0} \equiv\Lambda_{k,G}(\cU_{G_k}^k),$$
$\cU^k_G$ is as in $(\ref{UkG})$, and
$$ \cV_G^{k,i} \equiv G^i (\Lambda_{k,G}(\cU_{G_k}^k)) \cap G^{i-2^k} (\Lambda_{k,G}(\cU_{G_k}^k)).
$$
Clearly, the map $\cH_{k,G}$ defined as 
$$\cH_{k,G}\arrowvert_{\cC_*^{k,i}} \equiv \iHkG,$$
is a conjugacy of $G$ and $G_*$ on $\cC^k_*$:
\begin{equation}\label{Hconj}
G \circ \cH_{k,G} \arrowvert_{\cC^k_*} = \cH_{k,G} \circ G_* \arrowvert_{\cC^k_*}.
\end{equation}
\medskip

In the rest of this Section we will be studying the sequence of sets $\cC_G^k$. We will demonstrate that the limit set $\cC_G^\infty$  exists, is {\it stable}, in the sense that the maximal Lyapunov exponent on $\cC_G^\infty$ is zero,  bounded, closed and {\it invariant} under $G$. We will also show that there exists an $\omega>0$ such that for any $F  \in \bW_\omega(\varrho)$ the sets $\cC^\infty_G$ are {\it weakly rigid}: there exists a bi-Lipschitz (see $(\ref{bi-Lip})$) conjugacy $\cH_G$ between $\cC_*^\infty$ and  $\cC_G^\infty$.

The stable set $\cC_G^\infty$ is an analogue of the attracting set for dissipative H\'enon-like maps constructed \cite{dCLM}. The (more standard) approach of \cite{dCLM} is based on the so called presentation functions; it also demonstrates that the attracting set is Cantor and that the restriction of the dynamics to it is homeomorphic to the adding machine. We outline a similar procedure in Section \ref{Cantor_Set}.

The approach of this Section is more technical than that method of presentation functions; one of the our goals in pursuing it was to demonstrate that {\it the stable set is a Hausdorff limit of hyperbolic Cantor sets with vanishing hyperbolicity}. On the other hand the more compact method of presentation functions shows that {\it the stable set is indeed Cantor, and that the stable dynamics is that of an odometer (an adding machine)}.

We will first demonstrate boundedness of sets $\cC^k_G$.

Given a set $\cS \subset \cD_3$ on which an iterate $G^i$ for all $G \in \bG$ is defined, we use the notation $\bG^i(\cS)$ as a shorthand for $ \cup_{G\in\bG} G^i(\cS)$.

Notations $\bLambda^n(\cS)$ and $\bT_n(\cS)$ are used in a similar sense.

\medskip

\begin{lemma}\label{4-set}
 For all $F \in \bW(\varrho)$,  the sets $\cC^k_G$ are bounded, in particular,  $\cC^k_G \subset \cE$  for all $k \ge 1$, where  
$$\cE \equiv  \bigcup_{i=1}^4 \cE_i,$$
 and 
\begin{eqnarray}
\nonumber \cE_1&=&\{(x,u) \in \fR^2: {(x+0.0328)^2 \over 0.169^2}+{u^2 \over 0.010683153^2}< 1\},\\ 
\nonumber \cE_2 &\equiv&  \bG(\cE_1), \quad 
%\cE_3 \equiv  \bG^2(\cE_1),\quad
\cE_3 \equiv  \bLambda(\cE_2 \cup \cE_4),\quad
\cE_4 \equiv T(\bG(\cE_1)).
\end{eqnarray}
\end{lemma}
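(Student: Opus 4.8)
The plan is to exhibit $\cE$ as a trapping region: I will show that the finite family $\{\cE_1,\cE_2,\cE_3,\cE_4\}$ is closed under the two operations that generate the pieces $\cC_G^{k,i}$ — scaling by $\bLambda$ and one application of $\bG$ — and then propagate this through an induction on $k$. Throughout I use that $\bW(\varrho)$ is renormalization invariant, so $R^k[F]\in\bF$ and, by Theorem \ref{PIMTHM2}, the hyperbolic set $\cC_{G_k}$ of $G_k=(R^k[F])^3$ satisfies $\cC_{G_k}\subset\Delta$. I also use the identity inherited from $F^2=\Lambda_F\circ R[F]\circ\Lambda_F^{-1}$: cubing it gives $G^2=\Lambda_{G_0}\circ G_1\circ\Lambda_{G_0}^{-1}$, with $G_1=(R[F])^3$ and $\Lambda_{G_0}=\Lambda_F$.

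First, the base pieces. I claim $\cC_G^{k,0}=\Lambda_{k,G}(\cC_{G_k})\subset\cE_1$ for every $k\ge1$. Writing $\Lambda_{k,G}=\Lambda_{G_0}\circ\Lambda_{k-1,R[G]}$ with every factor in $\bLambda$, this follows from the two rigorous inclusions $\bLambda(\Delta)\subset\cE_1$ and $\bLambda(\cE_1)\subset\cE_1$ by induction on the number of scalings: the innermost scaling carries $\cC_{G_k}\subset\Delta$ into $\cE_1$, and each further scaling keeps the set inside $\cE_1$ because $\bLambda$ contracts toward the origin.

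Second, the orbit pieces, by induction on $k$. Let $P(k)$ be the assertion $\cC_G^k\subset\cE$ for all $F\in\bW(\varrho)$. For $P(1)$: $\cC_G^{1,0}\subset\cE_1$ by the first step, and $\cC_G^{1,1}=\bG(\cC_G^{1,0})\subset\bG(\cE_1)=\cE_2$. For the inductive step I split the orbit by parity. From $G^2=\Lambda_{G_0}\circ G_1\circ\Lambda_{G_0}^{-1}$ and $\cC_G^{k,0}=\Lambda_{G_0}(\cC_{G_1}^{k-1,0})$ one obtains, for even indices, $\cC_G^{k,2l}=\Lambda_{G_0}\big(\cC_{G_1}^{k-1,l}\big)$; applying $P(k-1)$ to $R[F]\in\bW(\varrho)$ gives $\cC_{G_1}^{k-1,l}\subset\cE$, hence $\cC_G^{k,2l}\subset\bLambda(\cE)$. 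Odd indices are one further application of $\bG$: $\cC_G^{k,2l+1}=\bG(\cC_G^{k,2l})\subset\bG(\bLambda(\cE))$. The induction therefore closes once $\bLambda(\cE)\subset\cE$ and $\bG(\bLambda(\cE))\subset\cE$. Using the definition $\cE_3=\bLambda(\cE_2\cup\cE_4)$, the first reduces to $\bLambda(\cE_1)\subset\cE_1$ and $\bLambda(\cE_3)\subset\cE_1\cup\cE_3$, giving $\bLambda(\cE)\subset\cE_1\cup\cE_3$; the second then reduces to $\bG(\cE_1)=\cE_2$ together with $\bG(\cE_3)\subset\cE$. The reflected piece $\cE_4=T(\bG(\cE_1))$ is exactly what is needed here: by reversibility (\ref{reversible}), $T\circ G\circ T=G^{-1}$, and since $\Delta$, hence $\cC_{G_k}$ and (as $\Lambda_{k,G}$ is diagonal and commutes with $T$) the base piece $\cC_G^{k,0}$ are $T$-symmetric, the terminal iterate satisfies $\cC_G^{k,2^k-1}=T(\cC_G^{k,1})\subset T(\cE_2)=\cE_4$. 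This is the geometric reason the last inclusion $\bG(\cE_3)\subset\cE$ holds, with the image landing in $\cE_4$.

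The remaining content is the rigorous verification of the finitely many inclusions $\bLambda(\Delta)\subset\cE_1$, $\bLambda(\cE_1)\subset\cE_1$, $\bLambda(\cE_3)\subset\cE_1\cup\cE_3$, $\bG(\cE_1)=\cE_2$ and $\bG(\cE_3)\subset\cE$ over the full enclosures $\blambda$, $\bmu$ and $\bG$. I expect this to be the main obstacle: the scaling inclusions are elementary, but the images $\bG(\cE_1)$ and especially $\bG(\cE_3)$ require tight interval enclosures of $G=F\circ F\circ F$ — whose derivative (\ref{Fder}) involves the implicitly defined solution $y(x,u)$ of $-s(y,x)=u$ — uniformly over all $F\in\bW(\varrho)$ and over regions much larger than the Cantor set itself. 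Controlling the propagation of interval overestimation through three compositions so that the computed images genuinely fit inside the thin ellipse-union $\cE$ (note the very small $u$-semi-axis of $\cE_1$) is the delicate computer-assisted step; the parity bookkeeping and the confirmation that $\{\cE_i\}$ is closed, so that the induction on $k$ terminates uniformly, is the accompanying conceptual point.
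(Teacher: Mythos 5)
Your proof is correct, but it is organized differently from the paper's. The paper does not induct on $k$: it writes $i$ in binary, $i=2^{j_1}+\cdots+2^{j_m}$, telescopes $G^i$ through the conjugacies $G^{2^j}=\Lambda_{j,G}\circ G_j\circ\Lambda_{j,G}^{-1}$ into the form $G^i=\TiG{i}\circ\Lambda_{j_m,G}^{-1}$, and then shows that every composition of the rescaled maps $\bT_n=\bLambda^n\circ\bG$ applied to $\cE_1$ stays inside $\cE$ --- tracking explicitly that the image lands in $\cE_3$ when a binary gap $j_{n}-j_{n-1}$ equals $1$ and in $\cE_1$ when it is $\ge 2$. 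Your parity induction (even pieces are $\Lambda_{G_0}$-images of the pieces of $R[F]$, odd pieces are $G$-images of even pieces, closing via $\bLambda(\cE)\subset\cE_1\cup\cE_3$ and $\bG(\cE_1\cup\cE_3)\subset\cE_2\cup\cE_4$) processes the same binary expansion one bit at a time and rests on exactly the same computer-verified inclusions ($\bLambda(\Delta)\subset\cE_1$, $\bLambda(\cE_1)\Subset\cE_1$, $\bLambda(\cE_3)\Subset\cE_1$, $\bG(\cE_3)\Subset\cE_4$), together with the renormalization invariance of $\bW(\varrho)$, which the paper also asserts; your identity $\cC_G^{k,2l}=\Lambda_{G_0}(\cC_{G_1}^{k-1,l})$ checks out. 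What your route buys is compactness and transparency for this lemma in isolation. What the paper's route buys is the machinery reused downstream: the maps $T_{n,m,G}$, the decomposition $\bTi{i}$, and the $\cE_1$-versus-$\cE_3$ gap bookkeeping are precisely what drive the contraction estimates in Proposition \ref{convergence} (the rate $\theta$ comes from counting $A_1$ against $A_3$ factors), the Lyapunov bound, and the rigidity argument of Section \ref{Weak_Rigidity}; your induction would have to be unrolled again to recover those estimates. Two minor caveats: your phrase ``$\bLambda$ contracts toward the origin'' is not by itself a reason ($\cE_1$ is not centered at the origin --- the verified inclusion $\bLambda(\cE_1)\Subset\cE_1$ is what does the work, and you do list it), and your reversibility argument for why $\bG(\cE_3)$ should land in $\cE_4$ is a correct heuristic about particular orbit pieces ($\cC_G^{k,2^k-1}=T(\cC_G^{k,1})$) but is not a proof of the set inclusion, which remains a numerical verification, as you acknowledge.
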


\begin{figure}[t]
\begin{center}
\begin{tabular}{c c}
\resizebox{60mm}{!}{\includegraphics[angle=-90]{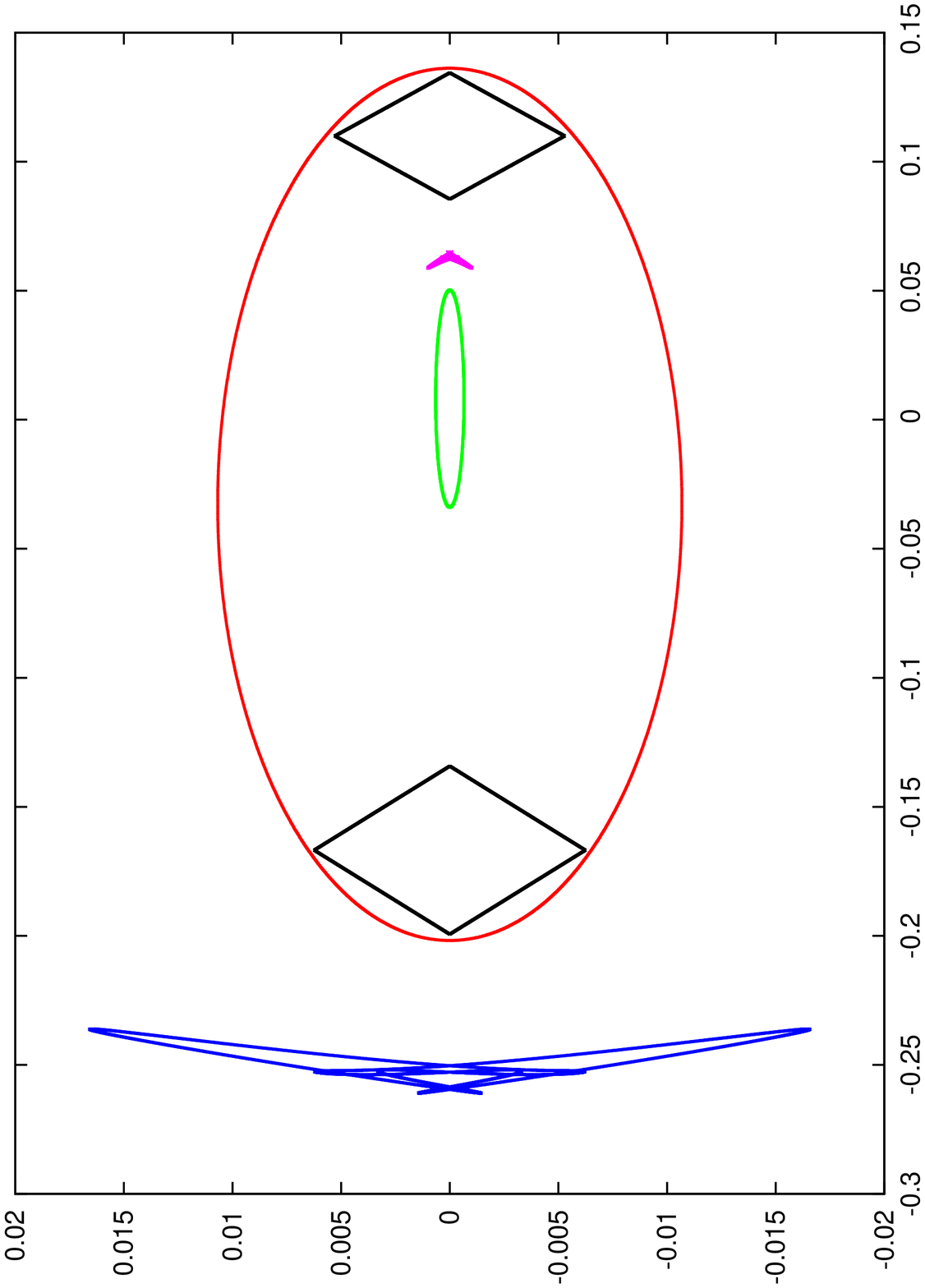}}&  \resizebox{60mm}{!}{\includegraphics[angle=-90]{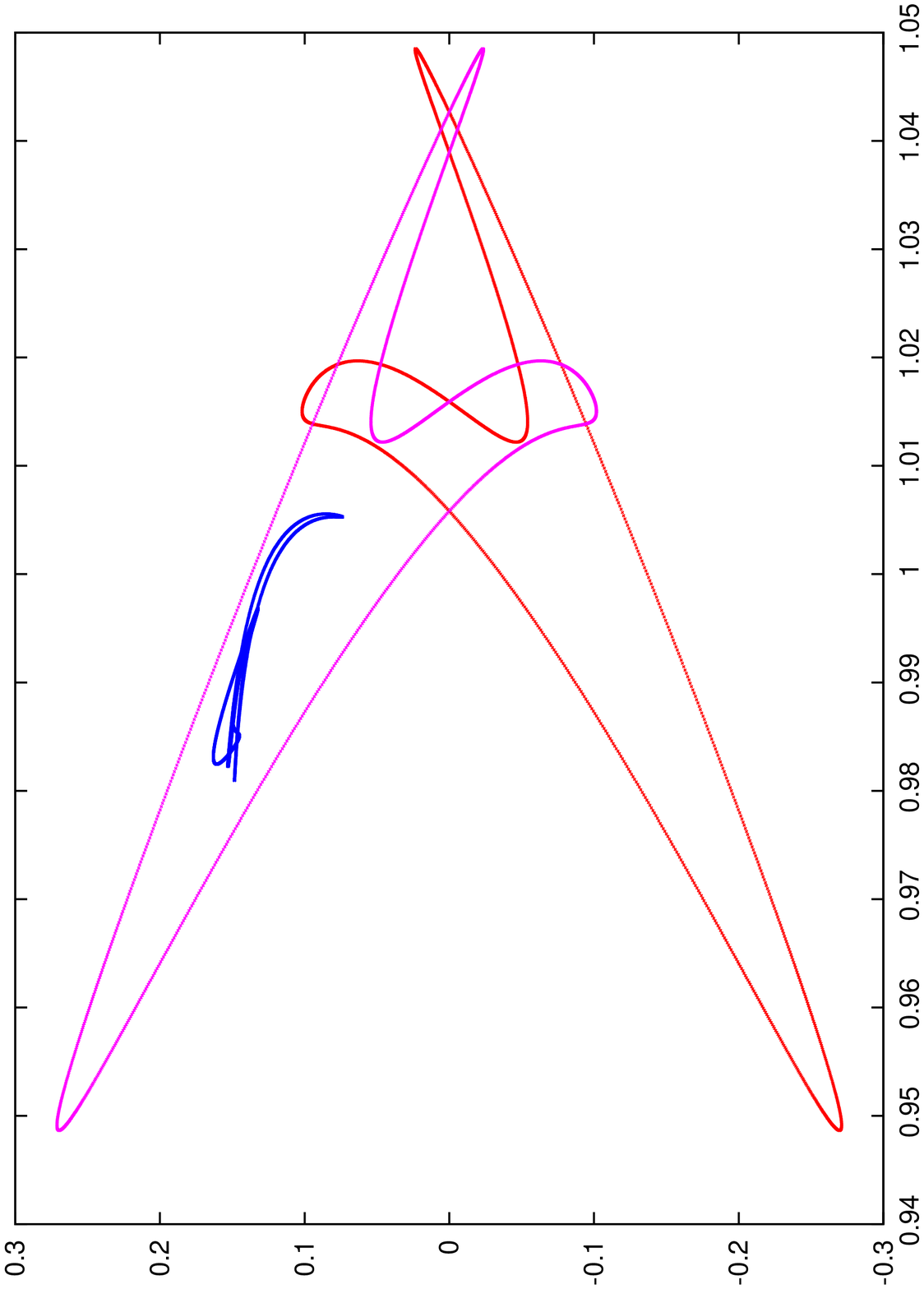}} \\
a) & b) 
\end{tabular} 
\caption{a) Sets $\cE_1$ (red), $\cE_3$ (blue), $\bLambda(\cE_1)$ (green) and $\bLambda(\cE_3)$ (magenta).  The two components of the Markov partition, rescaled by $\bLambda$,   are drawn in black.  b) Sets $\cE_2$ (red), $\cE_4$ (magenta) and $\bG(\cE_3)$ (blue).}\label{BoundedFig}
\end{center}
\end{figure}

%\begin{figure}[t]
%\begin{center}
%\includegraphics[angle=-90,width=0.95\textwidth]{}

\begin{proof}
Let $i<2^k$ for some $k \in \fN$. We write $i$ in its binary representation: 
$$i=\alpha_0 2^0 +\alpha_1 2^1+\ldots +\alpha_{k-1} 2^{k-1}, \quad \alpha_j=0,1.$$
Let $\{j_{l} \}_{l=1}^m$,  be the index set such that $\alpha_{j_l } \ne 0$:
\begin{equation}\label{binaryi}
i=2^{j_1} +2^{j_2}+\ldots +2^{j_m}, \quad j_m \le k-1, \quad m\le k.
\end{equation}

 Consider $G^i$ on a subset of  $\cD_3$ where this iterate is defined:
\begin{eqnarray}\label{Gi}
\nonumber G^i&=&G^{2^{j_1}} \circ G^{2^{j_2}} \circ \ldots \circ G^{2 ^{j_m}} \\
\nonumber &=&\Lambda_{j_1,G} \circ \!\left[ \Lambda_{j_1,G}^{-1} \circ G^{2^{j_1}} \! \circ \Lambda_{j_1,G} \right] \circ \Lambda_{j_1,G}^{-1} \circ \Lambda_{j_2,G} \\ 
\nonumber &\phantom{=}&\phantom{\Lambda_{j_1,G}}\circ \left[ \Lambda_{j_2,G}^{-1} \circ  G^{2^{j_2}}\! \circ \Lambda_{j_2,G} \right] \circ \Lambda_{j_2,G}^{-1} \circ \ldots \circ \Lambda_{j_m,G} \\
\nonumber &\phantom{=}&\phantom{\Lambda_{j_1,G}}\circ  \left[ \Lambda_{j_m,G}^{-1} \circ G^{2 ^{j_m}} \!\circ \Lambda_{j_m,G} \right] \circ \Lambda_{j_m,G}^{-1} \\
\nonumber &=&\Lambda_{j_1,G} \circ  G_{j_1} \circ \Lambda_{j_1,G}^{-1}  \circ \Lambda_{j_2,G} \circ  G_{j_2}  \circ \ldots \circ \Lambda_{j_{m-1},G}^{-1} \circ \Lambda_{j_m,G} \circ G_{j_m} \circ \Lambda_{j_m,G}^{-1}.
\end{eqnarray}

For convenience, we will denote
$$T_{n,m,G}=\Lambda_{m,G}^{-1} \circ \Lambda_{n,G} \circ G_n, \quad T_{n,0,G}=\Lambda_{n,G} \circ G_n, \quad T_n=\Lambda_*^n \circ G_*, \quad \bT_n \equiv \bLambda^n \circ \bG,
$$
and also use the following notation for compositions of these maps:
\begin{eqnarray}
\label{Ttransform2}\TisG{i}{q}{l}&=&T_{j_q,j_{q-1},G} \circ T_{j_{q+1},j_q,G} \circ \ldots \circ  T_{j_l,j_{l-1},G},\\
\label{Ttransform3}\Tis{i}{q}{l}&=&T_{j_q-j_{q-1}} \circ T_{j_{q+1}-j_q} \circ \ldots \circ  T_{j_l-j_{l-1}}, \\
\label{Ttransform4}\bTis{i}{q}{l}&=&\bT_{j_q-j_{q-1}} \circ \bT_{j_{q+1}-j_q} \circ \ldots \circ  \bT_{j_l-j_{l-1}}, \\
\label{Ttransform5}\TiG{i}&=&\TisG{i}{1}{m}, \quad  \Ti{i}=\Tis{i}{1}{m}, \quad  \bTi{i}=\bTis{i}{1}{m}, 
\end{eqnarray}
where $j_0 \equiv 0$.
In this notation, the iterate $G^i$ can be written as 
\begin{equation}\label{GiT}
G^i= \TiG{i} \circ \Lambda_{j_m,G}^{-1}.
\end{equation}

We apply the formula $(\ref{GiT})$ to write the action of $G^i$ on $\HkG (\Lambda_*^k(\cC_*))$:
$$G^i(\HkG (\Lambda_*^k(\cC_*))  ) =\TiG{i} \circ \Lambda_{j_m,G}^{-1} ( \Lambda_{k,G}(H_{G_k}(\cC_*)) ) \subset \bTi{i} \circ  \bLambda^{k-j_m}(H_{G_k}(\cC_*)).$$

The set $\cE_1$ was chosen so that $ \bLambda(\Delta)  \subset \cE_1$, where $\Delta$ is as in Theorem $2$, and $\bLambda(\cE_1) \subset  \cE_1$. Therefore, 
$$ G^i(\HkG (\Lambda_*^k(\cC_*))  )  \subset   \bTi{i} (\cE_1).
$$

Now, to demonstrate the invariance of the set $\cE$, we verify that 

%\medskip
%\begin{tabular}{r l r l r l}
%$\bLambda(\cE_1)$&\hspace{-0.3cm}$\Subset  \cE_1,$ \quad & \quad   $\bLambda(\cE_3 )$&\hspace{-0.3cm}$\Subset  \cE_1,$ \quad & \quad  $\bLambda( \cE_2 )$&\hspace{-0.3cm}$\Subset \cE_3,$\\
%$\bLambda( \cE_4 )$&\hspace{-0.3cm}$\Subset  \cE_3,$ \quad & \quad  $\bG( \bLambda( \cE_2))$&\hspace{-0.3cm}$\Subset  \cE_4$,  \quad &\quad  $\bG( \bLambda(\cE_4))$&\hspace{-0.3cm}$\Subset  \cE_4.$
%\end{tabular}
%\medskip

$$\bLambda(\cE_1) \Subset  \cE_1, \quad \bLambda(\cE_3 ) \Subset  \cE_1, \quad \bG( \cE_3) \Subset  \cE_4$$
(see \cite{GP} for programs used in this verification).

These inclusions imply (see Figure $\ref{containDynamics}$) that for any sequence $\{j_n\}_{q-1}^l$, $0 \le j_{q-1} < j_q < \ldots < j_l \le k-1$, the set $ \bTis{i}{q}{l} (\cE_1)$ is compactly contained in $\cE$. The set $\cE$ is depicted in Figure $\ref{BoundedFig}$.

\begin{figure}[t]
\psfrag{one}{$\cE_1$}
\psfrag{two}{$\cE_2$}
\psfrag{three}{$\cE_3$}
\psfrag{four}{$\cE_4$}
\psfrag{L}{$\Lambda$}
\psfrag{G}{$G$}
\begin{center}
\includegraphics[width=0.3 \textwidth]{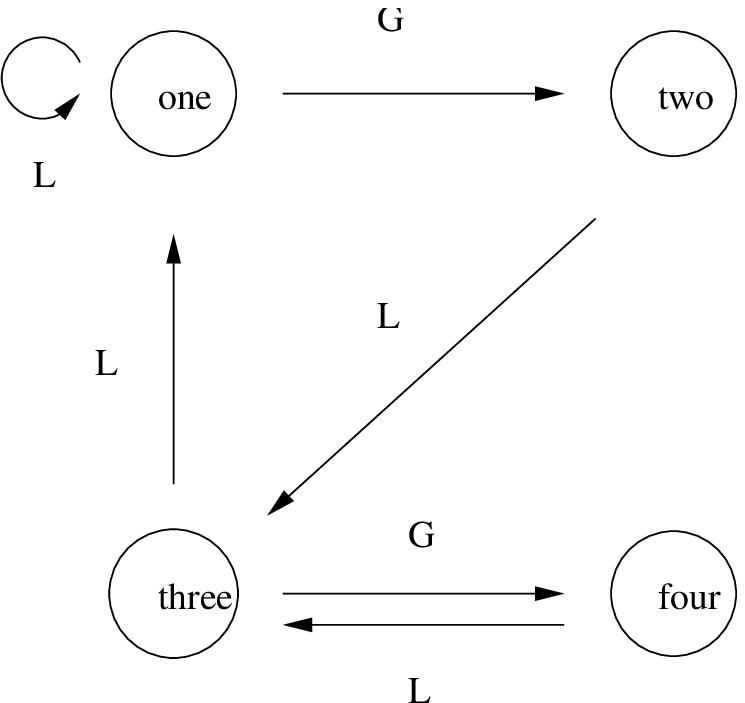} 
\caption{ Invariance of the set $\cE$ under the action of $\bT_n$.}\label{containDynamics}
\end{center}
 \end{figure}

\end{proof}

The next technical property of the set $\cE$ will be required in the proof of weak rigidity in Section $\ref{Weak_Rigidity}$.  This ``separation'' property has been verified directly on the computer.

\begin{lemma}\label{separation}
The projections of sets  $\cE_2$ and $\cE_4$ on the horizontal axis are separated from $0$ and from rescalings of themselves:  ${\rm dist}(\cP_x (\cE_2 \cup  \cE_4),0)$  is strictly positive, and 
$$|\lambda_-| \,  {\rm sup}_{p \in \cE_2 \cup \cE_4}|\cP_x p|<  {\rm inf}_{p \in \cE_2 \cup \cE_4}|\cP_x p|.$$ 
\end{lemma}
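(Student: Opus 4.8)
The plan is to reduce both assertions to a single rigorous enclosure of the horizontal projection of $\bG(\cE_1)$, and then to verify two explicit scalar inequalities by validated interval arithmetic.

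First I would exploit the symmetry built into the definition of $\cE_4$. The reversor $T(x,u)=(x,-u)$ acts only on the second coordinate, so it leaves horizontal projections unchanged: $\cP_x(\cE_4)=\cP_x(T(\bG(\cE_1)))=\cP_x(\bG(\cE_1))=\cP_x(\cE_2)$. Hence $\cP_x(\cE_2\cup\cE_4)=\cP_x(\bG(\cE_1))$, and both $\inf_{p\in\cE_2\cup\cE_4}|\cP_x p|$ and $\sup_{p\in\cE_2\cup\cE_4}|\cP_x p|$ are governed entirely by the single image set $\bG(\cE_1)$. This is the crucial simplification: everything reduces to the image of the explicit ellipse $\cE_1$ under the interval third iterate $\bG$, which (by the inclusions already established in the proof of Lemma \ref{4-set}) is well defined since $\cE_1\subset\cD_3$.

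Next I would compute rigorous real numbers $0<m\le M$ with
$$
m \le |\cP_x(G(p))| \le M \qquad \text{for all } G\in\bG,\ p\in\cE_1.
$$
Because $\bG$ is the interval third iterate of the family generated by $\bs$ and $\cE_1$ is given by an explicit inequality, this is a finite validated computation: one covers $\cE_1$ by a grid of small boxes, propagates each box through the three applications of $F$ in interval (or Taylor-model) arithmetic while simultaneously ranging over the $6\times 10^{-7}$ uncertainty ball $\bs$, and takes the union of the resulting $x$-intervals to extract $m$ and $M$. With these two numbers in hand, assertion $(1)$ is immediate, since $\mathrm{dist}(\cP_x(\cE_2\cup\cE_4),0)\ge m>0$. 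For assertion $(2)$ it suffices to check the single inequality $|\lambda_-|\,M < m$; indeed $|\lambda_-|\sup_{p}|\cP_x p|\le |\lambda_-| M < m \le \inf_{p}|\cP_x p|$, which is exactly the claimed separation from rescalings. Since $|\lambda_-|\approx 0.2489$ is the larger of the two endpoint magnitudes in $\blambda$, this is the worst-case rescaling, and the condition amounts to the ratio $M/m$ being smaller than $1/|\lambda_-|\approx 4.02$.

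The main obstacle is purely numerical: securing an enclosure tight enough that $M/m<1/|\lambda_-|$. A naive interval evaluation of $F\circ F\circ F$ over an extended set such as $\cE_1$ is ruined by the wrapping effect, grossly inflating $M$ and deflating $m$ and thereby breaking the ratio bound. The remedy is a sufficiently fine subdivision of $\cE_1$ together with derivative (mean-value) forms or Taylor models for each branch of $F$, so that cancellation across the three compositions is tracked rather than discarded, while the validated arithmetic continues to absorb the parameter uncertainty $\bs$. This is precisely the separation computation reported as having been carried out directly on the computer (see \cite{GP}).
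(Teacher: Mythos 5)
Your proposal is correct and is essentially the paper's own approach: the paper gives no analytic argument here, stating only that this separation property ``has been verified directly on the computer'' (programs in \cite{GP}), which is precisely the validated interval computation of the $x$-projection of $\bG(\cE_1)$ that you outline, down to checking ${\rm dist}(\cP_x\bG(\cE_1),0)>0$ and $|\lambda_-|\,M<m$. Your preliminary observation that $\cP_x\circ T=\cP_x$, hence $\cP_x(\cE_4)=\cP_x(\cE_2)=\cP_x(\bG(\cE_1))$, is a correct and natural simplification of that same verification.
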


\begin{remark}\label{norms}
We have computed (see \cite{GP}) the upper and lower bounds on the norms of $\|D \bT_1 v\|$ and $\|D (\bG \circ \bLambda )\|$ to be as follows:
%\label{DGnorms}
\begin{eqnarray}
\hspace{-2.0cm} &&\| D \bT_1 \|_{\cE_1} < 0.764 \equiv A_1, \quad \phantom{aaaaaaa} \| D \bT_1 \|_{\cE_3} < 0.344 \equiv A_3, \\
&&\| D \left(\bG \circ \bLambda \right) \|_{\cE} < 0.585 \equiv a, \quad  \inf_{v \in \fR^, \|v\|=1} \| D \bT_1 v \|_{\cE_1 \cup \cE_3} > 0.034\equiv b,
\end{eqnarray}
We will also denote 
$$A=\max\{A_1,A_3\}.$$

\end{remark}

\medskip

The next Lemma, albeit straightforward, will be important in our proofs of convergence of sets $\cC_G^k$ and existence of a bi-Lipschitz conjugacy between the limit sets.

\begin{lemma}\label{HkGLemma}
There exist $\varrho>0$ and a function $C(\varrho)$ with the property 
$$\lim_{\varrho \rightarrow 0} C(\varrho)=0,$$
such that for any $F \in \bW(\varrho)$

\begin{eqnarray}
 \arrowvert \cP_x \left( \HkG - \Lambda_{k,G} \circ \Lambda^{-k}_* \right)\arrowvert_{\Lambda^k_*(\cC_*)} &\le& C(\varrho) \, |\lambda_{k,G}| \, \nu^k, \\
 \arrowvert \cP_u \left( \HkG - \Lambda_{k,G} \circ \Lambda^{-k}_* \right)\arrowvert_{\Lambda^k_*(\cC_*)} &\le& C(\varrho) \, \mu_{k,G} \, \nu^k.
\end{eqnarray}

Furthermore, if  $F \in \bW_\omega(\varrho)$ then
\begin{eqnarray}
\label{omega_conv1} \arrowvert \cP_x \left( \HkG - \Lambda_{k,G} \circ \Lambda^{-k}_* \right)\arrowvert_{\Lambda^k_*(\cC_*)} &\le& C(\varrho) \, |\lambda_{k,G}| \, \omega^k, \\
\label{omega_conv2} \arrowvert \cP_u \left( \HkG - \Lambda_{k,G} \circ \Lambda^{-k}_* \right)\arrowvert_{\Lambda^k_*(\cC_*)} &\le& C(\varrho) \, \mu_{k,G} \, \omega^k.
\end{eqnarray}
\end{lemma}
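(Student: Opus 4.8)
The plan is to factor the difference through the \emph{linear} scaling $\Lambda_{k,G}$, thereby isolating the prefactors $|\lambda_{k,G}|$ and $\mu_{k,G}$ and reducing everything to a single estimate on how far the conjugacy $H_{G_k}$ of $G_k=R^k[G]$ with $G_*$ sits from the identity over $\cC_*$. Since $\HkG=\Lambda_{k,G}\circ H_{G_k}\circ\Lambda_*^{-k}$ and $\Lambda_{k,G}$ is the diagonal map with entries $\lambda_{k,G},\mu_{k,G}$, putting $q=\Lambda_*^{-k}(p)\in\cC_*$ for $p\in\Lambda_*^k(\cC_*)$ gives the exact identity
\begin{equation*}
\HkG(p)-\left(\Lambda_{k,G}\circ\Lambda_*^{-k}\right)(p)=\Lambda_{k,G}\bigl((H_{G_k}-\mathrm{Id})(q)\bigr),
\end{equation*}
and hence, componentwise,
\begin{eqnarray*}
\left|\cP_x\left(\HkG-\Lambda_{k,G}\circ\Lambda_*^{-k}\right)\right|_{\Lambda_*^k(\cC_*)}&=&|\lambda_{k,G}|\,\left|\cP_x(H_{G_k}-\mathrm{Id})\right|_{\cC_*},\\
\left|\cP_u\left(\HkG-\Lambda_{k,G}\circ\Lambda_*^{-k}\right)\right|_{\Lambda_*^k(\cC_*)}&=&\mu_{k,G}\,\left|\cP_u(H_{G_k}-\mathrm{Id})\right|_{\cC_*}.
\end{eqnarray*}
Thus the entire statement reduces to proving $|H_{G_k}-\mathrm{Id}|_{\cC_*}\le C(\varrho)\,\nu^k$ (respectively $\le C(\varrho)\,\omega^k$).

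Next I would bound $|H_{G_k}-\mathrm{Id}|_{\cC_*}$ by the $C^0$-distance of $G_k$ to $G_*$. The set $\cC_*$ is a hyperbolic set for $G_*$, and the conjugacy depends Lipschitz-continuously on the map: there is a constant $C_1$, depending only on the hyperbolicity data of $\cC_*$ supplied by Theorem \ref{PIMTHM2}, with $|H_{G_k}-\mathrm{Id}|_{\cC_*}\le C_1\,|G_k-G_*|_{\cD_3}$ once $G_k$ is close enough to $G_*$. I would obtain this in the usual way: writing $H=\mathrm{Id}+h$, the conjugacy equation $G_k\circ H=H\circ G_*$ becomes, to leading order, the cohomological equation $h\circ G_*-DG_*\cdot h=(G_k-G_*)+O(|h|^2)$, whose linear operator is boundedly invertible on continuous sections over $\cC_*$ precisely because the dynamics carries no spectrum on the unit circle; the implicit function theorem in $C^0(\cC_*,\fR^2)$ then yields both $H_{G_k}$ and the Lipschitz bound. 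Alternatively, the conjugacy and its Lipschitz dependence can be read off the explicit covering-relation construction of the horseshoes in \cite{GJ}.

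It remains to estimate $|G_k-G_*|_{\cD_3}$. As $G_k$ and $G_*$ are the third iterates of $R^k[F]$ and $F_*$, and the assignment (\ref{sdef}) from a generating function to its map, followed by composition, is Lipschitz on the relevant bounded domain by the derivative formula (\ref{Fder}) and analyticity on the bi-disk, one has $|G_k-G_*|_{\cD_3}\le C_2\,\|R^k[F]-F_*\|_\rho$. For $F\in\bW(\varrho)$ the generating function lies on the local stable manifold $W^s_\loc(s^*)$ at distance $O(\varrho)$ from $s^*$; since this manifold is tangent to the stable subspace of $D\cR_{EKW}(s^*)$, on which the spectral radius is at most $\nu<0.85$ by Theorem \ref{EKWTheorem}, the contraction gives $\|R^k[F]-F_*\|_\rho\le{\rm const}\cdot\varrho\cdot\nu^k$. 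Chaining the three estimates proves the first pair of inequalities with $C(\varrho)={\rm const}\cdot\varrho$, which tends to $0$ as $\varrho\to0$. The sharpened bounds (\ref{omega_conv1})--(\ref{omega_conv2}) for $F\in\bW_\omega(\varrho)$ follow at once by replacing $\nu^k$ with $\omega^k$ in the last step, which is exactly the convergence rate afforded on $\bW_\omega$.

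The main obstacle is the second step: making the Lipschitz constant $C_1$ of the conjugacy genuinely \emph{uniform} over all $F\in\bW(\varrho)$ and all $k$. While continuous dependence of conjugacies on hyperbolic dynamics is classical, the uniformity requires that the linearized cohomological operator be invertible with a norm bounded independently of $k$; this is guaranteed by the explicit and $k$-independent hyperbolicity bounds of \cite{GJ}---the eigenvalue estimates of Theorem \ref{PIMTHM2} and the geometric rates $\kappa_\pm$ of Lemma \ref{lMSBds}---which ensure that all the $G_k$ share a common stability neighborhood of $G_*$ once $\varrho$ is small.
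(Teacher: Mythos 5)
Your proposal is correct and takes essentially the same route as the paper's proof: the exact componentwise factorization through the diagonal scaling (your identity is the paper's closing computation $\arrowvert \cP_x ( H_{G_k} - Id) \arrowvert_{\cC_*} = |\lambda_{k,G}|^{-1} \arrowvert \cP_x ( \HkG - \Lambda_{k,G} \circ \Lambda_*^{-k} )\arrowvert_{\Lambda_*^k(\cC_*)}$ read in reverse), the reduction to $\arrowvert H_{G_k}-Id \arrowvert_{\cC_*} \le C(\varrho)\,\nu^k$ (resp. $\omega^k$) via Lipschitz dependence of the conjugacy on the perturbation, and the renormalization convergence rate on $\bW(\varrho)$, resp. $\bW_\omega(\varrho)$. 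The only cosmetic difference is that where you sketch a cohomological-equation/implicit-function-theorem argument (and the uniformity discussion) for the structural-stability estimate, the paper simply invokes the strong structural stability theorems of \cite{KH}.
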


\begin{proof}
By the strong structural stability property of the hyperbolic sets (see e.g. Theorem 18.1.3 and 18.2.1 in \cite{KH}), $\Arrowvert G_k - G_* \Arrowvert_{\cD_3}  \converge{{k \rightarrow \infty}}   0$ implies 
\begin{equation}\label{stab_converge}
\arrowvert H_{G_k} - Id \arrowvert_{\cC_*}  \converge{{k \rightarrow \infty}}   0,
\end{equation}
and, in fact,  if $\varrho$ is sufficiently small then there exists a constant $C'$ such that for all $F \in \bW(\varrho)$
$$\arrowvert H_{G_k} - Id \arrowvert_{\cC_*}  < C' \, \Arrowvert G_k - G_* \Arrowvert_{\cD_3}.$$
Now, for all $F \in \bW(\varrho)$
$$\Arrowvert G_k - G_* \Arrowvert_{\cD_3}  \le C''(\varrho) \,  \nu^k,$$
where the ``constant'' $C''(\varrho)$ decreases to zero with the size of the local manifold $\bW(\varrho)$, therefore,
\begin{equation}\label{unif_converge_nu}
\arrowvert H_{G_k} - Id \arrowvert_{\cC_*}  \le C(\varrho) \,  \nu^k
\end{equation}
for some function $C(\varrho)$ with the property $\lim_{\varrho \rightarrow 0} C(\varrho)=0$. In a similar way, if $F \in \bW_\omega(\varrho)$, then  
\begin{equation}\label{unif_converge_omega}
\arrowvert H_{G_k} - Id \arrowvert_{\cC_*}  \le C(\varrho)  \, \omega^k.
\end{equation}

Finally,
\begin{eqnarray}\label{HkGconv}
\nonumber \arrowvert \cP_x ( H_{G_k} - Id) \arrowvert_{\cC_*} &=&\arrowvert \cP_x (\Lambda_{k,G}^{-1} \circ \HkG \circ \Lambda_*^k - \Lambda_*^{-k} \circ \Lambda_*^k)  \arrowvert_{\cC_*}\\
&=&|\lambda_{k,G}|^{-1}  \arrowvert \cP_x ( \HkG  - \Lambda_{k,G} \circ \Lambda_*^{-k} )\arrowvert_{\Lambda_*^k(\cC_*)},
\end{eqnarray}
and similarly for $\cP_u ( H_{G_k} - Id)$. The claim follows.
\end{proof}

In  several following propositions and theorems we will have to use a number of ``constants'' $c_i(\varrho)$  all of which have the property 
$$\lim_{\varrho \rightarrow 0} c_i(\varrho)=0.$$

\begin{prop}\label{convergence}
 There exists $\varrho>0$ such that for all $F \in \bW(\varrho)$ the sets $\cV^k_G$ and $\cC^k_G$ converge in the Hausdorff metric, specifically:
$$d_H(\cV_G^k,\cV_G^{k+1}) \le {\rm const} \, \theta^k, \quad d_H(\cC_G^k,\cC_G^{k+1}) \le {\rm const} \, \theta^k,$$  
where $\theta=0.436$.

Furthermore, for any fixed $i$, there is $K>0$, such that for all $k \ge K$ 
\begin{equation}\label{convergence1}
d_H(\cV_G^{k,i},G^i((0,0))) \le {\rm const} \, \theta^k,
\end{equation}
the limit set is closed, and satisfies

$$\cC_G^{\infty} = \overline{\bigcup_{i \in \fZ} G^{i}((0,0))}.$$
\end{prop}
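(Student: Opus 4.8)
The plan is to show that the sequence $(\cC_G^k)_k$ is Cauchy in the Hausdorff metric with geometric rate $\theta^k$, by comparing consecutive levels under the natural ``binary refinement'' pairing. Writing each index below $2^{k+1}$ as $i$ or $i+2^k$ with $0\le i<2^k$, I would pair the level-$k$ piece $\cC_G^{k,i}$ with the two level-$(k+1)$ pieces $\cC_G^{k+1,i}$ and $\cC_G^{k+1,i+2^k}$. Since $\bigcup_{i<2^k}\bigl(\cC_G^{k+1,i}\cup\cC_G^{k+1,i+2^k}\bigr)=\cC_G^{k+1}$, subadditivity of Hausdorff distance over a common index set gives
\[
d_H(\cC_G^k,\cC_G^{k+1})\le\max_{0\le i<2^k}d_H\!\left(\cC_G^{k,i},\,\cC_G^{k+1,i}\cup\cC_G^{k+1,i+2^k}\right).
\]
The first reduction factors out the common prefix $G^i\circ\Lambda_{k,G}$. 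Using $\cC_G^{k,i}=G^i\circ\Lambda_{k,G}(\cC_{G_k})$, the scaling identity $G^{2^k}\circ\Lambda_{k,G}=\Lambda_{k,G}\circ G_k$ (the definition $G_k=\Lambda_{k,G}^{-1}\circ G^{2^k}\circ\Lambda_{k,G}$), and $\Lambda_{k+1,G}=\Lambda_{k,G}\circ\Lambda_{G_k}$, one rewrites both level-$(k+1)$ pieces as $G^i\circ\Lambda_{k,G}$ applied to $\Lambda_{G_k}(\cC_{G_{k+1}})$ and to $G_k\Lambda_{G_k}(\cC_{G_{k+1}})$, so that
\[
d_H\!\left(\cC_G^{k,i},\cC_G^{k+1,i}\cup\cC_G^{k+1,i+2^k}\right)\le \bigl\|D(G^i\circ\Lambda_{k,G})\bigr\|\cdot d_H\!\left(\cC_{G_k},\ \Lambda_{G_k}(\cC_{G_{k+1}})\cup G_k\Lambda_{G_k}(\cC_{G_{k+1}})\right).
\]
The base distance compares the hyperbolic set $\cC_{G_k}$ with the two-element Markov decomposition obtained by renormalizing $G_k$; it is bounded, equal to the fixed-point value $d_H(\cC_*,\Lambda_*(\cC_*)\cup G_*\Lambda_*(\cC_*))$ up to errors controlled by $\|H_{G_k}-\mathrm{Id}\|$, $\|H_{G_{k+1}}-\mathrm{Id}\|$ and $\|G_k-G_*\|$, all $O(\nu^k)$ by Lemma~\ref{HkGLemma} and the structural-stability estimates there.

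The heart of the matter, and the step I expect to be the main obstacle, is the uniform derivative bound $\|D(G^i\circ\Lambda_{k,G})\|\le\mathrm{const}\cdot\theta^k$ for \emph{all} $i<2^{k+1}$ at once, with $\theta=\sqrt{|\lambda_*|\,A}\approx 0.436$. Using $G^i\circ\Lambda_{k,G}=\TiG{i}\circ\bigl(\Lambda_{G_{j_m}}\circ\cdots\circ\Lambda_{G_{k-1}}\bigr)$ from (\ref{GiT}) and the interval enclosures of Lemma~\ref{4-set}, the derivative factor is bounded uniformly in $G\in\bG$ by the enclosure norm of $\bTi{i}\circ\bLambda^{k-j_m}$, which expands as an \emph{alternating} composition of $m$ copies of $\bT_1$ (one per active binary digit of $i$) and $k-m$ powers of $\bLambda$. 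The estimate then follows by region-tracking with the inclusions verified in the proof of Lemma~\ref{4-set} (Figure~\ref{containDynamics}): $\bT_1(\cE_1\cup\cE_3)\subset\cE_3$, while $\bLambda(\cE_1\cup\cE_3)\subset\cE_1$, together with the computer bounds $\|D\bT_1\|_{\cE_1}<A_1$, $\|D\bT_1\|_{\cE_3}<A_3$ of Remark~\ref{norms}. Every $\bLambda$ resets the point to $\cE_1$ and contributes $\le|\lambda_*|$; a $\bT_1$ contributes $A_1$ if the preceding factor was a $\bLambda$ and $A_3$ if it directly followed another $\bT_1$. Counting gives $\|D(G^i\circ\Lambda_{k,G})\|\le|\lambda_*|^{\,k-m}A_1^{\,m-t}A_3^{\,t}$, where $t$ is the number of adjacent active digits. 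Since $A_1>A_3$ and $A_1/|\lambda_*|>1$, this is maximized at $t=0$ and $m$ as large as the constraint $j_m\le k-1$ permits, i.e.\ digits spaced by $2$ so that $m\approx k/2$, yielding $(|\lambda_*|A_1)^{k/2}=\theta^k$. The delicacy is precisely the uniformity over all $2^{k+1}$ indices $i$ and over all $F\in\bW(\varrho)$, which hinges on the interval-enclosure norms being valid for every $G\in\bG$.

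Combining the two estimates gives $d_H(\cC_G^k,\cC_G^{k+1})\le(\delta_0+C(\varrho)\nu^k)\theta^k\le\mathrm{const}\cdot\theta^k$, since $\nu^k\theta^k\le\theta^k$; the identical argument for the covers $\cV_G^k$, using $\cU_{G_k}^k$ and the diameter bounds of Lemma~\ref{lMSBds} in place of $\cC_{G_k}$, yields the first two displayed inequalities. Completeness of the Hausdorff metric on compact subsets of $\cD$ then produces a closed limit set $\cC_G^\infty$. For the remaining claims I would observe that the same bound gives $\mathrm{diam}(\cC_G^{k,i})\le\mathrm{const}\cdot\theta^k$ \emph{uniformly in $i$}, and that $G^i((0,0))=G^i\circ\Lambda_{k,G}(0,0)$ lies within $\|D(G^i\circ\Lambda_{k,G})\|\cdot\mathrm{dist}(0,\cC_{G_k})\le\mathrm{const}\cdot\theta^k$ of $\cC_G^{k,i}$; this proves (\ref{convergence1}) for fixed $i$ (with room to spare, since for fixed $i$ the rate is in fact $O((|\lambda_*|\kappa_+)^k)$), and letting $k\to\infty$ identifies the limit with $\overline{\bigcup_{i\in\fZ}G^{i}((0,0))}$, the two-sided orbit closure following from the $G$-invariance obtained by passing to the limit in the conjugacy relation $G\circ\cH_{k,G}|_{\cC^k_*}=\cH_{k,G}\circ G_*|_{\cC^k_*}$.
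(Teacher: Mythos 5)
Your proposal reproduces the paper's central mechanism, and that part is fine: the representation $G^i\circ\Lambda_{k,G}=\TiG{i}\circ\bigl(\Lambda_{j_m,G}^{-1}\circ\Lambda_{k,G}\bigr)$, the region tracking $\bLambda(\cE_1\cup\cE_3)\subset\cE_1$, $\bT_1(\cE_1\cup\cE_3)\subset\cE_3$, the count $|\lambda_-|^{k-m}A_1^{q}A_3^{m-q}$ with the constraint $j_m\le k-1$, the optimization yielding $\theta=\sqrt{|\lambda_-|A_1}<0.436$, and the identification of the limit with $\overline{\bigcup_i G^i((0,0))}$ are all exactly the paper's. The genuine gap is in your refinement pairing, specifically for the pieces $\cC_G^{k+1,i+2^k}$. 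Your reduction
$d_H\bigl(\cC_G^{k,i},\,\cC_G^{k+1,i}\cup\cC_G^{k+1,i+2^k}\bigr)\le\bigl\|D(G^i\circ\Lambda_{k,G})\bigr\|\cdot d_H\bigl(\cC_{G_k},\,\Lambda_{G_k}(\cC_{G_{k+1}})\cup G_k\Lambda_{G_k}(\cC_{G_{k+1}})\bigr)$
is a mean-value estimate, so it needs control of $D(G^i\circ\Lambda_{k,G})$ along \emph{paths joining} the two base sets, not just along their own orbits. For pairs involving the second base set this is unavailable: $x\in\cC_{G_k}\subset\Delta$ while $z\in G_k\Lambda_{G_k}(\cC_{G_{k+1}})\subset\bG(\cE_1)=\cE_2$, and whenever $j_m=k-1$ --- i.e.\ for every $i$ with $2^{k-1}\le i<2^k$ --- the initial linear block $\Lambda_{j_m,G}^{-1}\circ\Lambda_{k,G}$ is the single map $\Lambda_{G_{k-1}}$, which sends these sets into $\cE_1$ and into $\bLambda(\cE_2)\subset\cE_3$ respectively; after the first application of $\bG$ the two orbits sit in $\cE_2$ and $\cE_4$. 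The verified data (Lemma \ref{4-set}, Remark \ref{norms}) bound $D\bT_1$, $D\bG$ only on $\cE_1\cup\cE_3$ and its forward images, never on any region connecting $\cE_1$ to $\cE_3$; and you cannot appeal to convexity of $\cE_1$, since nothing implies $\cE_3\subset\cE_1$ (the paper verifies $\bLambda(\cE_3)\Subset\cE_1$ separately from $\bLambda(\cE_1)\Subset\cE_1$, which would be superfluous if it did). By contrast, for the pieces $\cC_G^{k+1,i}$, and for $\cC_G^{k+1,i+2^k}$ when $k-j_m\ge 2$ or $i=0$, both base points land in the convex ellipse $\cE_1$ after the (linear) block, the connecting segment lies in $\cE_1$, and its image curves remain in the tracked regions --- that part of your argument is sound and coincides with the paper's forward estimate.

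The missing ingredient is reversibility. The paper handles the second half of the level-$(k+1)$ pieces by viewing them as common \emph{backward} iterates of the base pieces: by invariance of $\cC_G^{k,0}$ under $G^{2^k}$ and of $\cC_G^{k+1,0}$ under $G^{2^{k+1}}$ one has $\cC_G^{k,2^k-n}=G^{-n}(\cC_G^{k,0})$ and $\cC_G^{k+1,2^{k+1}-n}=G^{-n}(\cC_G^{k+1,0})$ for $1\le n\le 2^k-1$, and $G^{-1}=T\circ G\circ T$ lets one rerun the forward estimate verbatim, since $T$ is an isometry commuting with $\bLambda$ and preserving $\cE_1$. Unless you either adopt this step or add a new (computer-assisted) verification that segments from $\cE_1$ to $\cE_3$ stay in a region of controlled derivative, your bound on $d_H(\cC_G^{k,i},\cC_G^{k+1,i+2^k})$ for $i\ge 2^{k-1}$, and hence on $d_H(\cC_G^k,\cC_G^{k+1})$ and $d_H(\cV_G^k,\cV_G^{k+1})$, does not close.
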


\begin{proof}
 Clearly, 
$$d_H(\cV_G^{k,0},\cV_G^{k+1,0})=d_H(\Lambda_{k,G}(\cU_{G_k}^k),\Lambda_{k+1,G}(\cU_{G_{k+1}}^{k+1})) \le {\rm const}\, |\lambda_-|^k.$$

Let the binary expansion of $i<2^k$ be as in $(\ref{binaryi})$. Recall, that according to Lemma $\ref{4-set}$

$$G^i(\Lambda_{k,G}(\cU_{G_k}^k)) \subset  \bTi{i} \circ \bLambda^{k-j_m} (\cU_{G_k}^k),$$ 
$$G^i(\Lambda_{k+1,G}(\cU_{G_{k+1}}^{k+1})) \subset   \bTi{i} \circ \bLambda^{k+1-j_m} (\cU_{G_{k+1}}^{k+1}),$$
where $\bTi{i}$ is as in $(\ref{Ttransform5})$.

Let $s_k \equiv \Lambda_{k,G}(s)  \in \cV_G^{k,0}$ and $p_{k+1} \equiv \Lambda_{k+1,G}(p) \in \cV_G^{k+1,0}$ be any two points in the corresponding sets.  Since $ j_m \le k-1$, both $\bLambda^{k-j_m} (s) \subset \bLambda^{k-j_m} (\cU_{G_k}^{k})$ and $\bLambda^{k+1-j_m} (p) \subset \bLambda^{k+1-j_m} (\cU_{G_{k+1}}^{k+1})$ are  contained in $\cE_1$. According to Lemma $\ref{4-set}$ the sequences
$$\bTis{i}{m-l+1}{m} \circ \bLambda^{k-j_m} (s) \quad {\rm and} \quad \bTis{i}{m-l+1}{m} \circ \bLambda^{k+1-j_m} (p), \quad 1 \le l \le m,$$
land in $\cE_1$ if $j_{m-l+1}-j_{m-l} >1$, and in  $\cE_3$ if $j_{m-l+1}-j_{m-l} = 1$.  Suppose, out of $m$ differences $j_{n}-j_{n-1}$, $n=1,\ldots,m$, $q$ are larger than $1$ and $m-q$ equal to $1$.  Then,
\begin{eqnarray}\label{diam_bound}
\nonumber  |G^i(s_k)-G^i (p_{k+1})| &\le& \left|  \bTi{i} \circ \bLambda^{k-j_m} (s) -\bTi{i} \circ  \bLambda^{k+1-j_m} (p) \right| \\
  &\le & |\lambda_-|^{k-m}  \|D \bT_1 \|_{\cE_1}^{q} \|D \bT_1 \|_{\cE_3}^{m-q}  |s-\bLambda(p)|,
\end{eqnarray}
where
\begin{eqnarray}
\nonumber \arrowvert  \bTi{i} \circ \bLambda^{k-j_m} (s) &-&\bTi{i} \circ  \bLambda^{k+1-j_m} (p) \arrowvert \equiv \\
\nonumber  &\equiv& \sup_{G \in \bG, \Lambda \in \bLambda} \left|  \TiG{i} \circ \Lambda^{k-j_m}  (s) -  \TiG{i} \circ \Lambda^{k+1-j_m}  (p) \right|.
\end{eqnarray}.

The more often $\|D \bG \|$in  $\|D \bT_1\|$ has to be evaluated on $\cE_1$, that is, the more often the bound $A_1$ (see Remark $\ref{norms}$) appears in the product in $(\ref{diam_bound})$, the worse the resultant  bound. Recall that $m\le k$ and $j_m \le {k-1}$. Therefore, if $m\le \left[{k \over 2} \right]$, then all differences $j_{n}-j_{n-1}$ may be larger than 1 ($q=m$), and 
$$ |G^i(s_k)-G^i (p_{k+1})| < |\lambda_-|^{k-m} A_1^{m} |s-\bLambda(p)| <{\rm const} \, |\lambda_-|^{\left[{k \over 2}\right]} A_1^{\left[{k \over 2}\right]}.$$

If  $m > \left[{k \over 2} \right]$ then there are at most $q=k-m$ differences $j_{n}-j_{n-1}$ that are larger than $1$:
\begin{eqnarray}
\nonumber |G^i(s_k)-G^i (p_{k+1})| &<& |\lambda_-|^{k-m}A_1^{k-m} A_3^{m-(k-m)}  |s-\bLambda(p)| \\
\nonumber & = & \left[{A_3^2 \over A_1 |\lambda_-|} \right]^m \,  \left[{|\lambda_-| A_1 \over A_3} \right]^k |s-\bLambda(p)|,
\end{eqnarray}
and since $A_3^2 / |\lambda_-| A_1 <1$ we get in this case
$$ |G^i(s_k)-G^i (p_{k+1})| < {\rm const} \, \left[{A_3^2 \over A_1 |\lambda_-|} \right]^{\left[k \over 2\right]} \left[{|\lambda_-| A_1 \over A_3} \right]^k= {\rm const} \, \left[|\lambda_-|^{1/2} A_1^{1/2 } \right]^k.$$

Since  $|\lambda_-|^{1/2} A_1^{1/2 } < 0.436 < 1$, we get
$$  |G^i(s_k)-G^i(p_{k+1})| < {\rm const } \,  0.436^k, \quad 1 \le i \le 2^k-1.$$

Any point in $\cV_G^{k,i}$ can be represented as  $G^i(s_k)$ for some  $s_k \in \cV_G^{k,0}$, and any point in $\cV_G^{k+1,i}$ can be represented as  $G^i(p_{k+1})$ for some  $p_{k+1} \in \cV_G^{k+1,0}$, therefore
$$ d_H( \cV_G^{k,i} , \cV_G^{k+1,i} ) < {\rm const } \, 0.436^k, \quad i \le 2^k-1.$$

A similar computation holds for inverse iterates $G^{-1}=T \circ G \circ T$:
$$ d_H( \cV_G^{k,2^k-n} , \cV_G^{k+1,2^{k+1}-n} ) < {\rm const } \, 0.436^k, \quad 1 \le n \le 2^k-1.$$

This demonstrates that the Hausdorff distance between components  $\cV_G^{k,i}$ and components $\cV_G^{k+1,i}$, $\cV_G^{k+1,2^k+i}$ decreases with $k$ at a  geometric rate.

An identical argument for sets $\cC_G^k$ (rather than $\cV_G^k$) shows that these sets converge in the Hausdorff metric at the same rate $\theta$. We define the set $\cC^\infty_G$ as the set of all limit points of sequences $\{p_k\}$,  $p_k \in \cC^k_G$. Such set is clearly closed. %Suppose a sequence $s_i \in \cC^\infty_G$ converges to a point $s_\infty$. Each point $s_i$ is a limit point of some sequence $p_k^i \in \cC^k_G$. We can, therefore, choose a subsequence $p_k(i)^i \in \cC^{k(i)}_G$ so that $p_k(i)^i$ converges to $s_\infty$ 

Finally, to show $(\ref{convergence1})$,  we again notice that if $s_{k,i} \in \cV^{k,i}$, then there exists a point $s_k \equiv \Lambda_{k,G}(s) \in \cV_G^{k,0}$ such that $s_{k,i}=G^i(s_k)$. Therefore, if $K \in \fZ$ is such that $2^K>i$ then for any $k > K$
%\begin{eqnarray}
$$|s_{k,i}-G^i ((0,0))|=|G^i(s_k)-G^i ((0,0))| \le  |\lambda_-|^{k-m}  \|D \bT_1 \|_{\cE_1}^{q} \|D \bT_1 \|_{\cE_3}^{m-q}  |s-(0,0)| < {\rm const} \, \theta^k. $$
%\end{eqnarray}
\end{proof}

\medskip

We will now show that the set $\cC_G^\infty$ is invariant 
for $G$.

\begin{lemma}
For any $F \in \bW(\varrho)$ the sets $\cC^k_G$ are invariant under $G$. 
The same is true about the set $\cC_G^\infty$.
\end{lemma}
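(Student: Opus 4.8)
The plan is to prove the finite-level statement first, by showing that $G$ permutes the $2^k$ pieces $\cC_G^{k,i}$ cyclically, and then to pass to the Hausdorff limit. For fixed $k$, I would work from the representation $\cC_G^{k,i}=\iHkG(G_*^i(\Lambda_*^k(\cC_*)))$ together with the conjugacy relations $(\ref{Gconj})$. Using the second of these, $G\circ\iHkG=\phantom{a}_{i+1}\HkG\circ G_*$ on $\cC_*^{k,i}=G_*^i(\Lambda_*^k(\cC_*))$, one obtains for $0\le i\le 2^k-2$
$$G(\cC_G^{k,i})=\phantom{a}_{i+1}\HkG\bigl(G_*^{i+1}(\Lambda_*^k(\cC_*))\bigr)=\cC_G^{k,i+1}.$$
Granting the wrap-around $G(\cC_G^{k,2^k-1})=\cC_G^{k,0}$, the union $\cC_G^k=\bigcup_{i=0}^{2^k-1}\cC_G^{k,i}$ is then visibly invariant, and in fact $G$ acts as the cyclic shift $i\mapsto(i+1)\bmod 2^k$ on its components.

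The hard part is precisely this wrap-around at $i=2^k-1$, where $G(\cC_G^{k,2^k-1})=\phantom{a}_{2^k}\HkG\bigl(G_*^{2^k}(\Lambda_*^k(\cC_*))\bigr)$, and two ingredients must be assembled. First, the index is periodic modulo $2^k$: from the first relation in $(\ref{Gconj})$, $G^{2^k}\circ\HkG=\HkG\circ G_*^{2^k}$, so $\phantom{a}_{2^k}\HkG=G^{2^k}\circ\HkG\circ G_*^{-2^k}=\HkG$. Second, $\Lambda_*^k(\cC_*)$ is fixed by $G_*^{2^k}$, and for this I would exploit the self-similarity of the renormalization fixed point: $R[F_*]=F_*$ gives $F_*^2=\Lambda_*\circ F_*\circ\Lambda_*^{-1}$, whence by induction $F_*^{2^k}=\Lambda_*^k\circ F_*\circ\Lambda_*^{-k}$ and therefore $G_*^{2^k}=(F_*^{2^k})^3=\Lambda_*^k\circ G_*\circ\Lambda_*^{-k}$; combined with the $G_*$-invariance of the hyperbolic set $\cC_*$ from Theorem $\ref{PIMTHM2}$, this yields $G_*^{2^k}(\Lambda_*^k(\cC_*))=\Lambda_*^k(G_*(\cC_*))=\Lambda_*^k(\cC_*)$. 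Putting the two ingredients together gives $G(\cC_G^{k,2^k-1})=\HkG(\Lambda_*^k(\cC_*))=\cC_G^{k,0}$, closing the cycle. The only care needed is bookkeeping of the domains of the composed maps, but these coincide with the sets on which $(\ref{Gconj})$ is asserted to hold.

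For the limit set I would pass to the Hausdorff limit, which is routine once the finite-level invariance is available. The cleanest route uses the orbit description $\cC_G^\infty=\overline{\bigcup_{i\in\fZ}G^i((0,0))}$ supplied by Proposition $\ref{convergence}$: the bi-infinite orbit $O=\bigcup_{i\in\fZ}G^i((0,0))$ satisfies $G(O)=O$ by construction, and since Lemma $\ref{4-set}$ confines everything to the compact set $\cE\subset\cD_3$ on which $G$ is an analytic diffeomorphism, continuity of $G$ and of $G^{-1}$ gives $G(\overline{O})\subseteq\overline{G(O)}=\overline{O}$ and $G^{-1}(\overline{O})\subseteq\overline{O}$, hence $G(\cC_G^\infty)=\cC_G^\infty$. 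Alternatively, since each $\cC_G^k$ is invariant and $G$ is uniformly Lipschitz on $\cE$, one has $G(\cC_G^k)\to G(\cC_G^\infty)$ while $G(\cC_G^k)=\cC_G^k\to\cC_G^\infty$, and uniqueness of Hausdorff limits forces $G(\cC_G^\infty)=\cC_G^\infty$.
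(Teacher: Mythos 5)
Your proof is correct, but it routes the key wrap-around step differently from the paper, so a comparison is worthwhile. The paper's argument is a single computation on the $G$ side: it writes $\cC_G^{k,0}=\HkG(\Lambda_*^k(\cC_*))=\Lambda_{k,G}(H_{G_k}(\cC_*))=\Lambda_{k,G}(\cC_{G_k})$, uses $G^{2^k}\circ\Lambda_{k,G}=\Lambda_{k,G}\circ G_k$ (i.e.\ $G_k=\Lambda_{k,G}^{-1}\circ G^{2^k}\circ\Lambda_{k,G}$) and the invariance of the \emph{renormalized} map's hyperbolic set $\cC_{G_k}$ under $G_k$, to get $G^{2^k}(\cC_G^{k,0})\subset\cC_G^{k,0}$; invariance of the union $\cC_G^k$ is then immediate since the other pieces just shift forward. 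You instead work on the $G_*$ side: the cyclic shift $G(\cC_G^{k,i})=\cC_G^{k,i+1}$ follows formally from $(\ref{Gconj})$, and you close the cycle using the periodicity $\phantom{a}_{2^k}H_{k,G}=\HkG$ together with the fixed-point self-similarity $G_*^{2^k}=\Lambda_*^k\circ G_*\circ\Lambda_*^{-k}$ (derived from $R[F_*]=F_*$) and the $G_*$-invariance of $\cC_*$ from Theorem $\ref{PIMTHM2}$. These two inputs are conjugate to one another via $H_{G_k}$ --- indeed $(\ref{Gconj})$ is itself derived in the paper from exactly the facts its proof invokes --- so the underlying content coincides; what your version buys is the explicit cyclic action of $G$ on the pieces, consistent with the odometer picture of Section $\ref{Cantor_Set}$, and genuine invariance $G(\cC_G^k)=\cC_G^k$ rather than only the forward inclusion the paper records. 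For the limit set both arguments appeal to Proposition $\ref{convergence}$ plus continuity: the paper uses its limit-point definition of $\cC_G^\infty$ (if $p_\infty=\lim p_k$ with $p_k\in\cC_G^k$, then $G(p_\infty)=\lim G(p_k)\in\cC_G^\infty$), while you use the orbit-closure characterization, which additionally hands you invariance under $G^{-1}$. One cosmetic correction: $\cE$ as defined in Lemma $\ref{4-set}$ is open, not compact; your closure argument should refer to $\overline{\cE}$ (or simply to continuity of $G$ and $G^{-1}$ at points of $\cC_G^\infty$, which is all that is needed and is exactly what the paper uses).
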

\begin{proof}
This follows from a simple computation:
\begin{eqnarray}
\nonumber G^{2^k}( \HkG(\Lambda_*^k(\cC_*)) )&=&G^{2^k}( 
\Lambda_{k,G}(\Lambda_{k,G}^{-1}(\HkG(\Lambda_*^k(\cC_*)))) )\\
\nonumber &=&G^{2^k}( \Lambda_{k,G}(H_{G_k}(\cC_*))) \\
\nonumber &=&\Lambda_{k,G}(G_k(H_{G_k}(\cC_*)))\\
\nonumber &\subset& \Lambda_{k,G}(H_{G_k}(\cC_*))\\
\nonumber &=& \HkG(\Lambda_*^k(\cC_*).
\end{eqnarray}

By Proposition $\ref{convergence}$, a point $p_\infty \in 
\cC_G^\infty$ is a limit point of some sequence $\{p_k\}$, 
$p_k \in \cC_G^k$. Because of the invariance of $\cC_G^k$ we have that 
$G(p_k) \in \cC_G^k$ for all $k\in \fN$. Analyticity of the map $G$ 
implies that $\{G(p_k)\}$ converges in $\cC_G^\infty$:
$$G(p_\infty)=G(\lim_{k \rightarrow \infty} (p_k))=\lim_{k \rightarrow 
\infty} G(p_k) \in \cC_G^\infty.$$
\end{proof}

\medskip

\begin{figure}[t]
\begin{center}
\resizebox{90mm}{!}{\includegraphics[angle=-90]{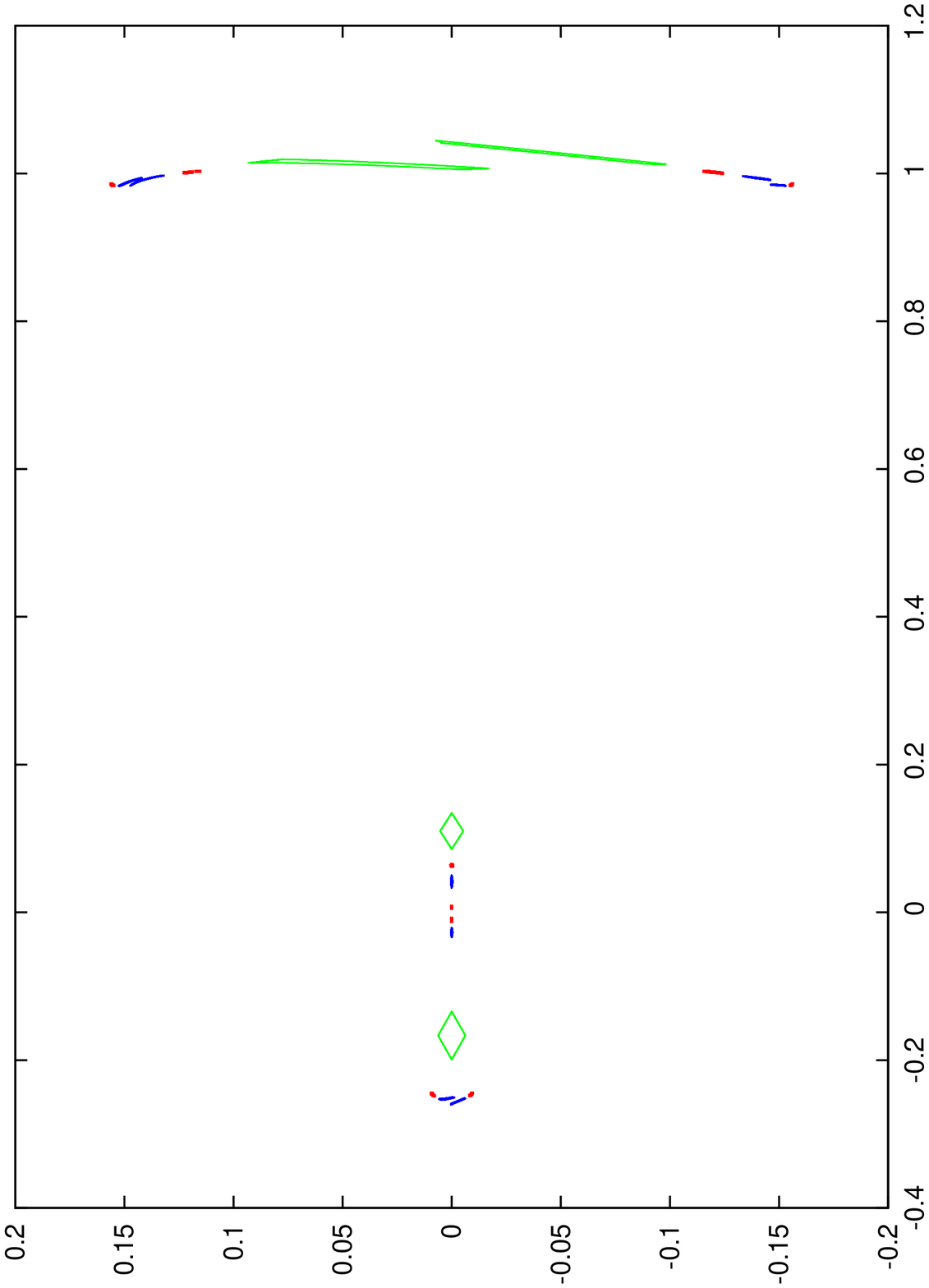}}
\caption{Approximations of sets $\cC_G^1$ (green), $\cC_G^2$ (blue) and $\cC^3_G$ (red)}\label{CkFig}
\end{center}
\end{figure}

We will now address the convergence properties of transformations $\iHkG$.

\begin{prop}\label{Hconvergence}
 There exists $\varrho>0$ such that for all $F \in \bW(\varrho)$ the following holds.

\noindent 1)  The transformations $G^i \circ \Lambda_{k,G} \circ \Lambda^{-k}_* \circ G_*^{-i}$  are defined and analytic on $\cV_*^{k,i}$ for all $k \in \fN$ and  $0 \le i < 2^k$, and satisfy%there exists a sequence of constants $\{C_k\}$, $$\lim_{k \rightarrow \infty} C_k=0,$$ such that
%\begin{eqnarray}
%\label{iHkGdiff} \arrowvert \iHkG - G^i \circ \Lambda_{k,G} \circ \Lambda^{-k}_* \circ G_*^{-i}\arrowvert_{G^i_*(\Lambda_*^k(\cC_*))} &\le& C_k \, \theta^k, \\
%\label{ApprConjDiff}\arrowvert \, G^i \circ \Lambda_{k,G} \circ \Lambda^{-k}_* \circ G_*^{-i} - Id \, \arrowvert_{\cV_*^{k,i}}  &\le& c_1(\varrho),
%\end{eqnarray}
\begin{eqnarray}
\label{iHkGdiff} \arrowvert \iHkG - G^i \circ \Lambda_{k,G} \circ \Lambda^{-k}_* \circ G_*^{-i}\arrowvert_{\cC_*^{k,i}} &\le& C(\varrho)\, (\nu \, \theta)^k, \\
\label{ApprConjDiff}\arrowvert \, G^i \circ \Lambda_{k,G} \circ \Lambda^{-k}_* \circ G_*^{-i} - Id \, \arrowvert_{\cV_*^{k,i}}  &\le& c_1(\varrho),
\end{eqnarray}
where  $C(\varrho)$ is as in Lemma $\ref{HkGLemma}$, and   $c_1(\varrho)$ is some function of $\varrho$ independent of $k$, $i$ and $G$, and satisfying
$$\lim_{\varrho \rightarrow 0} c_1(\varrho)=0.$$

\medskip

\noindent 2) For any $p \in \cV^{k,i}_*$ and $s \in \cV^{{k+1},i}_*$ 
\begin{equation}
\label{iHkGconv} \arrowvert G^i \circ \Lambda_{k,G} \circ \Lambda^{-k}_* \circ G_*^{-i}(p) - G^i \circ \Lambda_{k+1,G} \circ \Lambda^{-k-1}_* \circ G_*^{-i}(s)\arrowvert < {\rm const} \, \theta^k.
\end{equation}
\end{prop}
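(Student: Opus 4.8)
The plan is to prove the three estimates in Proposition \ref{Hconvergence} by reducing everything to the convergence rate $\nu^k$ of $H_{G_k}\to Id$ (Lemma \ref{HkGLemma}) together with the contraction estimates $A_1,A_3,b$ of Remark \ref{norms} and the telescoping representation $(\ref{GiT})$ of $G^i$. First I would establish that the transformations $G^i\circ\Lambda_{k,G}\circ\Lambda_*^{-k}\circ G_*^{-i}$ are well-defined and analytic on $\cV_*^{k,i}$: the domains of analyticity of $G$ and $\Lambda_{k,G}$ are controlled by Lemma \ref{4-set} (the sets $\cV_*^{k,i}$ land in the invariant region $\cE$ under the $\bT_n$-dynamics), so I would verify that the iterated images stay inside the analyticity domains $\cD,\cD_3$ for all $F\in\bW(\varrho)$ once $\varrho$ is small.

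For $(\ref{iHkGdiff})$ I would write, using the definition $\iHkG=G^i\circ\HkG\circ G_*^{-i}$,
\begin{eqnarray*}
\iHkG - G^i\circ\Lambda_{k,G}\circ\Lambda_*^{-k}\circ G_*^{-i}
&=& G^i\circ\big(\HkG-\Lambda_{k,G}\circ\Lambda_*^{-k}\big)\circ G_*^{-i},
\end{eqnarray*}
and bound the displacement of $G^i$ acting on the two nearby arguments by the Lipschitz constant of $G^i$ times the displacement $\HkG-\Lambda_{k,G}\circ\Lambda_*^{-k}$, which Lemma \ref{HkGLemma} controls by $C(\varrho)\,|\lambda_{k,G}|\,\nu^k$ in the $x$-component and $C(\varrho)\,\mu_{k,G}\,\nu^k$ in the $u$-component. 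The key point is that the Lipschitz constant of $G^i$ restricted to these components, when expressed via $(\ref{GiT})$ as a product of $\bT_1$-type factors, is estimated by exactly the same combinatorial bound on the number of $\cE_1$ versus $\cE_3$ visits that produced the rate $\theta=|\lambda_-|^{1/2}A_1^{1/2}<0.436$ in Proposition \ref{convergence}. Combining the displacement factor $\nu^k$ with the expansion/contraction balance that yields $\theta^k$ gives the product $(\nu\,\theta)^k$. Here one must be careful that the factors $|\lambda_{k,G}|^{-1}$ and $\mu_{k,G}^{-1}$ that appear when differentiating $G^i$ (which undoes the scalings $\Lambda_{k,G}$) cancel against the scalings $|\lambda_{k,G}|$, $\mu_{k,G}$ multiplying $\nu^k$ in Lemma \ref{HkGLemma}; this cancellation is what leaves a clean $(\nu\theta)^k$ rather than a spurious growing factor.

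For $(\ref{ApprConjDiff})$ I would note that $G^i\circ\Lambda_{k,G}\circ\Lambda_*^{-k}\circ G_*^{-i}$ differs from $Id$ only through the mismatch between the $G$-dynamics and the $G_*$-dynamics at each of the $i<2^k$ steps, since if $F=F_*$ the map is exactly the identity. Writing the difference as a telescoping sum over the $i$ applications of $G$ versus $G_*$ and using $\Arrowvert G_k-G_*\Arrowvert_{\cD_3}\le C''(\varrho)\nu^k$ together with the contraction afforded by $b$ and $A$ on $\cE$, I would show each summand is dominated by a geometric series whose sum is bounded by a single $c_1(\varrho)$ independent of $k,i,G$, with $c_1(\varrho)\to0$ as $\varrho\to0$ because every term carries a factor of $\Arrowvert G_k-G_*\Arrowvert$. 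Finally, $(\ref{iHkGconv})$ is proved exactly as $(\ref{convergence1})$ in Proposition \ref{convergence}: represent points of $\cV_*^{k,i}$ and $\cV_*^{k+1,i}$ as images under $G^i$ of points in the level-$0$ component, apply $(\ref{GiT})$ and the same $q$ versus $m-q$ counting of $\cE_1/\cE_3$ visits, and conclude the displacement is $\le{\rm const}\,\theta^k$. I expect the main obstacle to be bookkeeping the cancellation of the scaling factors in part 1) uniformly in $i$ and $k$: the Lipschitz constant of $G^i$ depends delicately on the binary pattern of $i$, and one must check that the worst case (governed by how many differences $j_n-j_{n-1}$ exceed $1$) still yields the stated $(\nu\theta)^k$ rate rather than degrading it.
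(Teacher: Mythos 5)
Your handling of the analyticity claim, of the bound $(\ref{iHkGdiff})$, and of part 2) coincides with the paper's own argument. The paper likewise invokes Lemma $\ref{4-set}$ for well-definedness, reduces $(\ref{iHkGdiff})$ to the factorization $G^i=\TiG{i}\circ\Lambda_{j_m,G}^{-1}$ of $(\ref{GiT})$, bounds the Lipschitz constant of $\TiG{i}\circ\Lambda_{j_m,G}^{-1}\circ\Lambda_{k,G}$ by $\theta^k$ via the same $\cE_1$/$\cE_3$ counting as in Proposition $\ref{convergence}$, and pairs it with the displacement $\arrowvert \Lambda_{k,G}^{-1}\circ\HkG\circ\Lambda_*^k-Id\arrowvert_{\cC_*}=\arrowvert H_{G_k}-Id\arrowvert_{\cC_*}\le C(\varrho)\nu^k$ from Lemma $\ref{HkGLemma}$; your ``cancellation of the scalings'' is precisely this factoring out of $\Lambda_{k,G}$, and part 2) is indeed proved exactly as $(\ref{convergence1})$.

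The gap is in $(\ref{ApprConjDiff})$, which is where the paper spends most of its effort. A telescoping sum ``over the $i$ applications of $G$ versus $G_*$'' cannot give the stated bound: it produces up to $2^k-1$ terms, and each term carries the raw mismatch between $G$ and $G_*$ (equivalently, the failure of $\Lambda_{k,G}\circ\Lambda_*^{-k}$ to intertwine them), which is of order $\varrho$ and does not decay in $k$ or in the step index. The estimate $\Arrowvert G_k-G_*\Arrowvert_{\cD_3}\le C''(\varrho)\nu^k$ that you invoke concerns the renormalized maps $G_k=R^k[G]$, and these never enter the picture if one replaces $G$ by $G_*$ one iterate at a time; moreover each telescoping term is further multiplied by Lipschitz constants of long iterates of $G$, which are not uniformly bounded. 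Summing $\sim 2^k$ terms of size $O(\varrho)$ cannot yield a $c_1(\varrho)$ independent of $k$ and $i$. What makes the paper's proof work is that the decomposition is organized over the $m\le k$ renormalization scales $j_1<\cdots<j_m$ of the binary expansion of $i$, via the representation $(\ref{Gi})$, in which the block at scale $j_q$ is $\Lambda_{j_q,G}\circ G_{j_q}\circ\Lambda_{j_q,G}^{-1}$, so the error injected at that scale is $O(\nu^{j_q})$ --- geometrically small in the scale, not in the iterate count. Concretely, the paper writes $G^i\circ\Lambda_{k,G}\circ\Lambda_*^{-k}\circ G_*^{-i}=\cJ_{G,q,i}\circ\left\{Id+h_{j_q}\right\}\circ\cJ_{G_*,q,i}^{-1}$ and runs an induction from $q=m$ down to $q=1$, propagating $h_{j_q}$ through one block per step with amplification at most $a=\Arrowvert D(\bG\circ\bLambda)\Arrowvert_{\cE}<0.585$ (this, not $b$ or $A$, is the relevant constant from Remark $\ref{norms}$), so the accumulated error is dominated by $c_6(\varrho)\sum_{l\ge 0}(a\nu)^l$, finite precisely because $a\nu<1$. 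This scale-wise factorization and induction is the missing idea; without it the assertion that the summands form a convergent geometric series has no justification.
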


\begin{proof}$$\phantom{a}$$
\noindent 1) Notice that 
$$G^i \circ \Lambda_{k,G} \circ \Lambda^{-k}_* \circ G_*^{-i}({G^i_*(\cV_*^{k,0})})= G^i  ( {\Lambda_{k,G}(\cU_*^k)} ) \subset  G^i ( {\bLambda^k(\cU_*^k)} ).$$ 
By Lemma $\ref{4-set}$ the iterate $G^i$, $1 \le i < 2^k$, is well-defined and analytic on $\bLambda^k(\cU_*^k)$ for all $G \in \bG$. 

 Proving  $(\ref{iHkGdiff})$ is similar to $(\ref{diam_bound})$ and arguments that follow it:
\begin{eqnarray}
\nonumber \arrowvert \iHkG \!\!\!\!&-&\!\!\!\! G^i \circ \Lambda_{k,G} \circ \Lambda^{-k}_* \circ G_*^{-i}\arrowvert_{G^i_*(\Lambda_*^k(\cC_*))} \\
\nonumber &=& \arrowvert G^i \circ H_{k,G} - G^i \circ \Lambda_{k,G} \circ \Lambda^{-k}_*\arrowvert_{\Lambda_*^k(\cC_*)} \\
\nonumber & \le &  \left|\TiG{i} \circ \Lambda_{j_m,G}^{-1} \circ \Lambda_{k,G}  \circ \Lambda_{k,G}^{-1} \circ  H_{k,G} -\TiG{i} \circ \Lambda_{j_m,G}^{-1} \circ \Lambda_{k,G} \circ \Lambda^{-k}_*\right|_{\Lambda_*^k(\cC_*)} \\
\nonumber & \le & \theta^k  \left| \Lambda_{k,G}^{-1} \circ  H_{k,G} \circ \Lambda_*^k - Id \right| _{\cC_*} \le C(\varrho) \, (\nu \, \theta)^k,
\end{eqnarray}
where the function $C(\varrho)$ is as in Lemma $\ref{HkGLemma}$.

We will now demonstrate $(\ref{ApprConjDiff})$ in two steps.

\noindent {\it \underline{Step (1)}}. Write
\begin{equation}\label{bracket}
G^i \circ \Lambda_{k,G} \circ \Lambda^{-k}_* \circ G_*^{-i}\!=\cJ_{G,m,i} \circ \!\left\{ G_{j_m}\!\circ \Lambda_{j_m,G}^{-1} \circ \Lambda_{k,G} \circ \Lambda_*^{j_m\!-k} \circ G_*^{-1}\!\right\} \!\circ \cJ_{G_*,m,i}^{-1},
\end{equation}
where we have denoted for all $1 \le q \le m$:
\begin{eqnarray}
\nonumber \cJ_{G,q,i} &\equiv& \TisG{i}{1}{q-1} \circ \Lambda_{j_{q-1},G}^{-1} \circ \Lambda_{j_q,G}=G^{i} \circ  \Lambda_{j_m,G} \circ  \TisG{i}{q+1}{m}^{-1}  \circ G_{j_q}^{-1}.
\end{eqnarray}

The image of $\cV_*^{k,i}$ under the inverse of this map is contained in $\cE_2 \cup \cE_4$ for all $1 \le q \le m$:
\begin{equation}\label{cJimage}
\cJ_{G_*,q,i}^{-1}(\cV_*^{k,i})= G_* \circ \Tis{i}{q+1}{m} \circ \Lambda^{k-j_m}_*( \cU_*^{k}) \subset \bG \circ \bTis{i}{q+1}{m} \circ \bLambda^{k-j_m}( \cU_*^{k}) \Subset \cE_2 \cup \cE_4,
\end{equation}
while
\begin{equation}\label{cJimage2}
G_{*}^{-1}(\cJ_{G,q,i}^{-1}(\cV_*^{k,i})) \Subset \cE_1 \cup \cE_3.
\end{equation}

Since $|\lambda_{G_n} - \lambda_*| \le  c_2(\varrho) \, \nu^n$ and  $|\mu_{G_n} - \mu_*| \le c_2(\varrho) \, \nu^n$,  we get
\begin{equation}\label{LinId}
\arrowvert \Lambda_{j_m,G}^{-1} \circ \Lambda_{k,G} \circ \Lambda_*^{j_m-k}-Id \arrowvert_{G^{-1}_*(\cJ_{G_*,m,i}^{-1}(\cV_*^{k,i}))} \le c_3(\varrho) \, \nu^{j_m}.
\end{equation}
Since containment of  $G_{j_q}^{-1}(\cJ_{G,q,i}^{-1}(\cV_*^{k,i}))$ in $\cE_1 \cup \cE_3$ is compact, it is possible to  choose $\varrho$ so that for all $1 \le q \le m$
$$\Lambda_{j_q,G}^{-1} \circ \Lambda_{k,G} \circ \Lambda_*^{j_q-k} (G^{-1}_*(\cJ_{G_*,m,i}^{-1}(\cV_*^{k,i})))  \subset \cE_1 \cup \cE_3.$$ 
The map $G_{j_m}$ is defined and analytic on $\cE_1 \cup \cE_3$ and maps it into $\cE_2 \cup \cE_4$, and therefore $ G_{j_m} \circ \Lambda_{j_m,G}^{-1} \circ \Lambda_{k,G} \circ \Lambda_*^{j_m-k} \circ G_*^{-1}$ is analytic on $\cJ_{G_*,m,i}^{-1}(\cV_*^{k,i})$ and maps it into $\cE_2 \cup \cE_4$. Because of $(\ref{LinId})$ we also have for any $n>j_q$:
\begin{eqnarray}\label{IdDiffTr}
\nonumber \arrowvert Id \!\!\!\!&-&\!\!\!\!  G_{j_q} \circ \Lambda_{j_q,G}^{-1} \circ \Lambda_{n,G} \circ \Lambda_*^{j_q-n} \circ G^{-1}_* \arrowvert_{\cJ_{G_*,q,i}^{-1}(\cV_*^{k,i})} \\
\nonumber &\le& \arrowvert  G_{j_q} \circ \Lambda_{j_q,G}^{-1} \circ \Lambda_{n,G} \circ \Lambda_*^{j_q-n} \circ G^{-1}_*-G_{j_q} \circ G_*^{-1} \arrowvert_{\cJ_{G_*,q,i}^{-1}(\cV_*^{k,i})} \\
\nonumber &+& \arrowvert G_{j_q} \circ G_*^{-1} -Id \arrowvert_{\cJ_{G_*,q,i}^{-1}(\cV_*^{k,i})}\\
\hspace{-1.5cm} &\le& {\rm const} \, c_3(\varrho) \, \nu^{j_q} + c_4(\varrho) \, \nu^{j_q} \le c_5(\varrho) (1+{\rm const})\nu^{j_q}=c_6(\varrho) \nu^{j_q},
\end{eqnarray}
where $c_4(\varrho)$ is another constant decreasing to zero together with $\varrho$, and $c_5(\varrho)$ is the maximum of $c_3(\varrho)$ and $c_4(\varrho)$.

As the result of the above discussion, we have
\begin{equation}\label{Gjm}
G^i \circ \Lambda_{k,G} \circ \Lambda^{-k}_* \circ G_*^{-i} \arrowvert_{\cV^{k,i}_* }= \cJ_{G,m,i} \circ  \left\{ Id+h_{j_m} \right\} \circ \cJ_{G_*,m,i}^{-1} \arrowvert_{\cV^{k,i}_* },
\end{equation}
where $h_{j_m}$ is some function analytic on $\cJ_{G_*,m,i}^{-1}({\cV^{k,i}_* })$ and satisfying 
\begin{equation}\label{hjm}
\arrowvert h_{j_m}  \arrowvert_{\cJ_{G_*,m,i}^{-1}(\cV^{k,i}_*) } \le c_6(\varrho) \nu^{j_m}.
\end{equation}

\noindent {\it \underline{Step (2)}}. At the next step, to obtain the bound $(\ref{ApprConjDiff})$ we will use an inductive argument.

Suppose that for $q \le m$
\begin{equation}\label{Gjq}
 G^i \circ \Lambda_{k,G} \circ \Lambda^{-k}_* \circ G_*^{-i} \arrowvert_{\cV^{k,i}_* }= \cJ_{G,q,i} \circ  \left\{ Id+h_{j_q} \right\} \circ \cJ_{G_*,q,i}^{-1} \arrowvert_{\cV^{k,i}_* },
\end{equation}
where $h_{j_q}$ is some function analytic on $\cJ_{G_*,q,i}^{-1}({\cV^{k,i}_* })$ and satisfying 
$$\arrowvert h_{j_q}  \arrowvert_{\cJ_{G_*,q,i}^{-1}(\cV^{k,i}_*) } \le c_6(\varrho) \left[\sum_{i=0}^{m-q} a^i \nu^{j_{q+i}-j_q}   \right] \nu^{j_q}.$$
This is certainly satisfied for $q=m$ (see $(\ref{Gjm})$ and $(\ref{hjm})$).

We prove that a representation similar to $(\ref{Gjq})$ holds for $q-1$ with a similar bound on $h_{j_{q-1}}$. First,
\begin{eqnarray}
\nonumber   G^i \circ \Lambda_{k,G} \circ \Lambda^{-k}_* \circ G_*^{-i} \arrowvert_{\cV^{k,i}_* } = \cJ_{G,q-1,i} & \circ &  \left\{ G_{j_{q-1}} \circ \Lambda_{j_{q-1},G}^{-1} \circ \Lambda_{j_q,G} \circ (Id+h_{j_q})  \right.\\
&\circ& \left. \Lambda_*^{j_{q-1}-j_{q}} \circ G^{-1}_* \right\} \circ \cJ_{G_*,q-1,i}^{-1} \arrowvert_{\cV^{k,i}_* }.
\end{eqnarray}

Again, consider the map in the brackets:
\begin{eqnarray}\label{IdDiffTr2}
\nonumber \arrowvert Id \!\!\!\!&-&\!\!\!\! G_{j_{q-1}} \circ \Lambda_{j_{q-1},G}^{-1} \circ \Lambda_{j_q,G} \circ (Id+h_{j_q})  \circ  \Lambda_*^{j_{q-1}-j_q} \circ G^{-1}_* \arrowvert_{ \cJ_{G_*,m-1,i}^{-1}(\cV^{k,i}_*) } \\
\nonumber  &\le & \arrowvert Id -G_{j_{q-1}} \circ \Lambda_{j_{q-1},G}^{-1} \circ \Lambda_{j_q,G} \circ  \Lambda_*^{j_{q-1}-j_q} \circ G^{-1}_* \arrowvert_{ \cJ_{G_*,q-1,i}^{-1}(\cV^{k,i}_*)} \\
\nonumber& +&  \arrowvert G_{j_{q-1}} \circ \Lambda_{j_{q-1},G}^{-1} \circ \Lambda_{j_q,G} \circ (Id+h_{j_q})  \circ  \Lambda_*^{j_{q-1}-j_q} \circ G^{-1}_* \\
&-& G_{j_{q-1}} \circ \Lambda_{j_{q-1},G}^{-1} \circ \Lambda_{j_q,G} \circ  \Lambda_*^{j_{q-1}-j_q} \circ G^{-1}_* \arrowvert_{ \cJ_{G_*,q-1,i}^{-1}(\cV^{k,i}_*) }.
\end{eqnarray}

The first norm in $(\ref{IdDiffTr2})$ has been estimated in $(\ref{IdDiffTr})$. To provide a bound on the second norm we will use the fact that 
$$G_{j_{q-1}} \circ \Lambda_{j_{q-1},G}^{-1} \circ \Lambda_{j_q,G}= 
\left\{ G_{j_{q-1}} \circ \Lambda_{G_{j_q-1}} \right\} \circ \left[ \Lambda_{j_{q-1},G}^{-1} \circ \Lambda_{j_q-1,G}\right],$$
and that if $j_q-1-j_{q-1}=0$ then 
$$\Lambda_{j_{q-1},G}^{-1}( \Lambda_{j_q-1,G} (\cE_2 \cup \cE_4))=\cE_2 \cup \cE_4,$$ 
if  $j_q-1-j_{q-1}=1$ then  
$$\Lambda_{j_{q-1},G}^{-1}( \Lambda_{j_q-1,G} (\cE_2 \cup \cE_4)) \subset \cE_3,$$
if  $j_q-1-j_{q-1}\ge 2$ then  
$$\Lambda_{j_{q-1},G}^{-1}( \Lambda_{j_q-1,G} (\cE_2 \cup \cE_4)) \subset \cE_1$$
Therefore,
\begin{eqnarray}
\nonumber \arrowvert Id \!\!\!\!&-&\!\!\!\! G_{j_{q-1}} \circ \Lambda_{j_{q-1},G}^{-1} \circ \Lambda_{j_q,G} \circ (Id+h_{j_q})  \circ  \Lambda_*^{j_{q-1}-j_m} \circ G^{-1}_* \arrowvert_{ \cJ_{G_*,m-1,i}^{-1}(\cV^{k,i}_*) } \\
\nonumber&\le&  c_6(\varrho)\nu^{j_{q-1}}+ |\lambda_-|^{j_q-1-j_{q-1}} \left\| D \left\{G_{j_{q-1}}\! \circ \Lambda_{G_{j_q-1}}\right\} \right\|_{\cE} \arrowvert h_{j_q} \arrowvert_{ \cJ_{G_*,q,i}^{-1}(\cV^{k,i}_*) } \\
\nonumber&\le&  c_6(\varrho)\nu^{j_{q-1}}+ a  c_6(\varrho) \left[\sum_{i=0}^{m-q} a^i \nu^{j_{q+i}-j_q}   \right] \nu^{j_q} \\
\nonumber &=& c_6(\varrho) \left[\sum_{i=0}^{m-q+1} a^i \nu^{j_{q-1+i}-j_{q-1}}   \right] \nu^{j_{q-1}}.
\end{eqnarray}
Therefore,
\begin{eqnarray}\label{Gjq2}
\nonumber   \arrowvert Id  \!\!\!\!&-&\!\!\!\! G^i \circ \Lambda_{k,G} \circ \Lambda^{-k}_* \circ G_*^{-i} \arrowvert_{\cV^{k,i}_* } \\
\nonumber &=& \arrowvert Id - \cJ_{G,1,i} \circ  \left\{ Id+h_{j_1} \right\} \circ \cJ_{G_*,1,i}^{-1} \arrowvert_{\cV^{k,i}_* } \\
\nonumber &=&\arrowvert Id \!-\! \Lambda_{j_1,G} \circ  \! \left\{ Id\!+\!h_{j_1} \right\} \! \circ \Lambda_*^{-j_1} \arrowvert_{\cV^{k,i}_*} \\ \nonumber &=& \arrowvert Id \!-\! \Lambda_{j_1,G} \circ  \Lambda_*^{-j_1} \arrowvert_{\cV^{k,i}_*} \!+\! \arrowvert \Lambda_{j_1,G} \circ h_{j_1}  \arrowvert_{ \cJ_{G_*,1,i}^{-1}(\cV^{k,i}_*\!) } \\
\nonumber & \le &c_3(\varrho)  + |\lambda^{j_1}_-|  c_6(\varrho) \left[\sum_{i=0}^{m-1} a^i \nu^{j_{1+i}-j_1}   \right] \nu^{j_1} \\ 
\nonumber &\le& c_3(\varrho) + c_6(\varrho) \, |\lambda_- \nu|^{j_1}  \left[\sum_{i=0}^{k-1} (a \nu)^i   \right] \\
\nonumber & \le &c_3(\varrho) + c_6(\varrho) [ 1-a \nu  ]^{-1} \equiv c_1(\varrho),
\end{eqnarray}
the last equality being the definition of $c_1(\varrho)$.

\medskip

\noindent 2) 
To demonstrate $(\ref{iHkGconv})$ we notice that $G^{-i}_*(p)$ is in  $\cV^{k,0}_*$ and $G^{-i}_*(s)$ is in $\cV^{k+1,0}_*$, while  $\hat{p}=\Lambda^{-k}_*(G^{-i}_*(p))$ and $\hat{s}=\Lambda^{-k-1}_*(G^{-i}_*(s))$ are in $\cU_*^k$. It follows from a computation similar to $(32)$ that 
\begin{eqnarray}
\nonumber &&\arrowvert G^i \circ \Lambda_{k,G} \circ \Lambda^{-k}_* \circ G_*^{-i}(p) -G^i \circ \Lambda_{k+1,G} \circ \Lambda^{-k-1}_* \circ G_*^{-i}(s)\arrowvert =\\
\nonumber && \arrowvert G^i \circ \Lambda_{k,G}(\hat{p}) -G^i \circ \Lambda_{k+1,G}(\hat{s})\arrowvert \\
\nonumber  && \le  \left|  \bTi{i} \circ \bLambda^{k-j_m} (\hat{p}) -\bTi{i} \circ  \bLambda^{k+1-j_m} (\hat{s}) \right|\\
\nonumber && \le  \ {\rm const} \, \theta^k. 
\end{eqnarray}
\end{proof}

\medskip

The above proposition implies, that if $p$ is in the limit set $\cC_*^{\infty}$, then there exist integers $i$ and $K$, dependent on $p$, and a  sequence of points $p_{k,i} \in \cC_*^{k,i}$, $k \ge K$, that converge to $p$: $\lim_{k \rightarrow \infty} p_{k,i}=p$.  We have from $(\ref{Hconj})$
$$G \circ \cH_{k,G}(p_{k,i})=\cH_{k,G} \circ G_*(p_{k,i}).$$
Bounds $(\ref{iHkGdiff})$---$(\ref{iHkGconv})$ imply that the limit
\begin{equation}\label{Hconv}
\cH_G(p) \equiv \lim_{k \rightarrow \infty}\cH_{k,G}(p_{k,i})=\lim_{k \rightarrow \infty}  G^i \circ \Lambda_{k,G} \circ \Lambda^{-k}_* \circ G_*^{-i}(p_{k,i})
\end{equation}
exists.

We will finally demonstrate that the limit set $\cC_G^\infty$ is stable.

Recall, the definition of the upper Lyapunov exponent of $(p,v) \in (\cD
\cap \fR^2) \times \fR^2$ with respect to $G$:
$$\chi(p,v;G) \equiv \overline{\lim}_{i \rightarrow \infty} {1 \over i}
\log\left[  \Arrowvert DG^i(p) v
\Arrowvert\right],$$
where $\Arrowvert \Arrowvert$ is some norm in $\fR^2$.
The maximal Lyapunov exponent of $p \in (\cD \cap \fR^2)$ with respect
to $G$ is defined as
$$\chi(p;G) \equiv \sup_{||v||=1} \chi(p,v;G).$$

\begin{lemma}
For any $F \in \bW(\varrho)$ and $p \in \cC^k_G$ the maximal Lyapunov exponent
$\chi(p;G)$ satisfies

$$ \chi(p;G) \le C {1\over 2^k},$$
where $C=C(G)$ is some constant independent of $k$.
\end{lemma}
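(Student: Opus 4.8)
The plan is to reduce the estimate to the renormalized map $G_k$, whose derivative is uniformly bounded on its invariant hyperbolic set, and to use the exact renormalization conjugacy $G^{2^k}=\Lambda_{k,G}\circ G_k\circ\Lambda_{k,G}^{-1}$ already exploited in the invariance lemma above. First I would note that the maximal Lyapunov exponent is constant along $G$-orbits: since $G$ is a diffeomorphism of a neighborhood of the bounded set $\cE$ of Lemma~\ref{4-set} and $\cC_G^{k,i}=G^i(\cC_G^{k,0})$, any $p\in\cC_G^{k,i}$ can be written $p=G^i(p_0)$ with $p_0\in\cC_G^{k,0}$, whence $\chi(p;G)=\chi(p_0;G)$. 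It therefore suffices to bound $\chi(p;G)$ for $p\in\cC_G^{k,0}$.

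For such $p$ put $q\equiv\Lambda_{k,G}^{-1}(p)\in\cC_{G_k}$. Iterating the conjugacy gives $G^{2^k n}=\Lambda_{k,G}\circ G_k^{\,n}\circ\Lambda_{k,G}^{-1}$, and since $G_k$ preserves $\cC_{G_k}$ the orbit $q,G_kq,\dots$ stays in $\cC_{G_k}$. Because $\Lambda_{k,G}$ is linear, the chain rule gives
$$DG^{2^k n}(p)=\Lambda_{k,G}\cdot DG_k^{\,n}(q)\cdot\Lambda_{k,G}^{-1},$$
so that
$$\|DG^{2^k n}(p)\|\le\|\Lambda_{k,G}\|\,\|\Lambda_{k,G}^{-1}\|\prod_{j=0}^{n-1}\bigl\|DG_k\bigl(G_k^{\,j}q\bigr)\bigr\|\le\|\Lambda_{k,G}\|\,\|\Lambda_{k,G}^{-1}\|\,M^{\,n},$$
where $M\equiv\sup_{k}\sup_{\cC_{G_k}}\|DG_k\|$. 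The two scaling factors depend on $k$ and $G$ but not on $n$, so dividing $\log\|DG^{2^k n}(p)\|$ by $2^k n$ and letting $n\to\infty$ leaves $\limsup_{n}\tfrac{1}{2^k n}\log\|DG^{2^k n}(p)\|\le \tfrac{\log M}{2^k}$.

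The step I expect to be the crux is verifying that $M$ is finite and, crucially, independent of $k$. This should follow from facts already in hand: the renormalized maps $G_k=R^k[G]$ are all analytic on the common domain $\cD_3$ and converge to $G_*$ (the proof of Lemma~\ref{HkGLemma} gives $\|G_k-G_*\|_{\cD_3}\le C''(\varrho)\,\nu^k$), so their sup norms on $\cD_3$ are uniformly bounded in $k$; a Cauchy estimate then bounds $\|DG_k\|$ uniformly on the compactly contained hyperbolic set $\cC_{G_k}\subset\cD_3$. For each fixed $k$ the scaling norms $\|\Lambda_{k,G}\|,\|\Lambda_{k,G}^{-1}\|$ are mere constants, so no uniformity in $k$ is needed for them.

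It remains to pass from multiples of $2^k$ to a general iterate. I would write $i=2^k n+r$ with $0\le r<2^k$ and split
$$DG^i(p)=DG^r\bigl(G^{2^k n}(p)\bigr)\cdot DG^{2^k n}(p).$$
Since the whole forward orbit of $p$ stays in $\cE$ by Lemma~\ref{4-set}, the first factor is bounded by $(M')^{2^k}$ with $M'\equiv\|D\bG\|_{\cE}$, a constant independent of $i$; after dividing by $i$ and letting $i\to\infty$ this contributes nothing to the $\limsup$, while $\tfrac{2^k n}{i}\to1$. Hence $\chi(p;G)\le \log M/2^k$, and taking $C=C(G)=\log M$ (independent of $k$) finishes the argument.
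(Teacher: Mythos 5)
Your argument is correct and is essentially the paper's own proof: both rest on the renormalization conjugacy $G^{2^k}=\Lambda_{k,G}\circ G_k\circ\Lambda_{k,G}^{-1}$, a bound on $\|DG_k\|$ over the invariant set of $G_k$ (the paper's $B_k$, your $M$), and the observation that the conjugating scalings and the leftover iterates (fewer than $2^k$ of them) contribute factors that depend on $k$ but not on the number of completed $2^k$-blocks, hence disappear in the Lyapunov limit. You are in fact more explicit than the paper on two points it glosses over: the reduction from $\cC_G^k$ to its $i=0$ component via constancy of $\chi$ along orbits of a diffeomorphism, and the $k$-uniformity of $M=\sup_k\sup_{\cC_{G_k}}\|DG_k\|$ (without which the constant $C$ in the statement would not be independent of $k$), which indeed follows from $\|G_k-G_*\|_{\cD_3}\le C''(\varrho)\,\nu^k$ together with a Cauchy estimate on a compactly contained set.

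One caveat concerning your remainder step: the finiteness of $M'=\|D\bG\|_{\cE}$ is not available from the paper's results. Derivative bounds for $\bG$ are only computed on $\cE_1\cup\cE_3$ (Remark $\ref{norms}$), and the paper's telescoped representation $G^n=\TiG{n}\circ\Lambda_{j_m,G}^{-1}$ is arranged precisely so that $G$-type maps are only ever differentiated on $\cE_1\cup\cE_3$, the connecting maps being linear scalings; nothing in the paper guarantees that $\bG$ is analytic with bounded derivative on all of $\cE_2\cup\cE_4$. This gap is harmless for your argument, however, because the remainder factor requires no uniformity in $k$: replace $M'$ by $\sup_{\cC_G^k}\|DG\|$, which is finite for each fixed $k$ since $\cC_G^k$ is a compact $G$-invariant subset of $\cD_3$, and the contribution $r\log M'/i$ with $r<2^k$ still vanishes as $i\to\infty$, leaving the bound $\chi(p;G)\le \log M/2^k$ intact.
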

\begin{proof}
Let $i=q 2^k +n$, $n=2^{j_1}+2^{j_2}+ \ldots + 2^{j_m} < 2^k$ and  $p
\in \HkG (\Lambda_*^k(\cC_*))$. Denote

\begin{equation}
\nonumber t \equiv  \Lambda^{-1}_{k,G}(p) \in \cC_{G_k}, \quad
 s \equiv \Lambda_{k,G}(G^q_k(t)) \in
\Lambda_{k,G}(\cC_{G_k}),
\end{equation}
then
\begin{eqnarray}
\nonumber 
DG^i(p)&=&DG^{n+q 2^k}(p)=DG^n(G^{q 2^k}(p)) \cdot DG^{q 2^k}(p)\\
\nonumber &=&DG^n( \Lambda_{k,G} \circ G^q_k \circ \Lambda^{-1}_{k,G}(p)  )\cdot 
\Lambda_{k,G} \cdot DG^q_k
(\Lambda^{-1}_{k,G}(p))  \cdot \Lambda^{-1}_{k,G}\\
\nonumber &=&DG^n( s ) \cdot \Lambda_{k,G} \cdot DG^q_k (t) \cdot 
\Lambda^{-1}_{k,G}\\
\nonumber 
&=& D \left( \TiG{n} \circ  \Lambda_{j_{m},G}^{-1} \right)(s)  \cdot
\Lambda_{k,G} \cdot  DG^q_k (t) \cdot \Lambda^{-1}_{k,G}
\end{eqnarray}
where  we have used the representation $(\ref{Gi})$. According to Lemma
$\ref{4-set}$
$$\TisG{n}{l+1}{m} \circ \Lambda_{j_m,G}^{-1} (s) \in \cE_1 \cup \cE_3.$$
Denote $B_k$ -  an upper bound  on the derivative norm
of $G_k$ on its invariant set  $H_{G_k}(\cC_*)$.
%$$\Arrowvert DG_k\arrowvert_{\cC_{G_k}} \Arrowvert < \bar{B}_k.$$
Then
\begin{equation}
\Arrowvert DG^i(p) \Arrowvert  \le \left(\frac{A}{|\lambda_-|}\right)^m 
\left( {
|\lambda_-| \over \mu_-  }\right)^{j_m} B^q_k \left( {
|\lambda_-| \over \mu_-  }\right)^k.
\end{equation}

Finally,
\begin{eqnarray}
\nonumber \chi(p;G) & = &
\overline{\lim}_{i \rightarrow\infty} {1 \over i} \log \left[  \Arrowvert DG^i(p)\Arrowvert \right]  \le  \lim_{i \rightarrow \infty} {1 \over i}
\log \left[  \left(\frac{A}{|\lambda_-|}\right)^m  B^q_k \left( {|\lambda_-| \over\mu_- }\right)^{j_m+k} \right] \\
\nonumber & \le & \lim_{i \rightarrow \infty} \left\{ {k\over i} \log\left[ A\left(\frac{|\lambda_-|}{\mu_-^2}\right) \right] + {q \over i} 
\log B_k \right\} \le {1 \over
2^k}   \log B_k.
\end{eqnarray}
\end{proof}

\medskip

Clearly, the above result implies stability of the limit set:

\begin{corollary}
For any $F \in \bW(\varrho)$ and $p \in \cC^\infty_G$ the maximal Lyapunov exponent
$ \chi(p;G)$ is equal to zero.
\end{corollary}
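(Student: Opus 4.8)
The plan is to sandwich the maximal Lyapunov exponent between a universal lower bound coming from area preservation and the family of upper bounds already supplied by the preceding Lemma, and then let the level $k\to\infty$.

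First I would record the elementary lower bound $\chi(p;G)\ge 0$, valid for every $p$ whose $G$-orbit stays in the compact set $\cE$. Each $G=F\circ F\circ F$ is exact symplectic, so $\det DG^i(p)=1$ along any orbit in $\cE$; writing the singular values of $DG^i(p)$ as $\sigma_1^{(i)}\ge\sigma_2^{(i)}>0$ with $\sigma_1^{(i)}\sigma_2^{(i)}=1$ forces $\|DG^i(p)\|=\sigma_1^{(i)}\ge 1$, whence $\tfrac1i\log\|DG^i(p)\|\ge 0$ for all $i$. Since $\chi(p;G)$ is exactly the exponential growth rate of the operator norm $\|DG^i(p)\|$, this gives $\chi(p;G)\ge 0$. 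Thus it suffices to prove $\chi(p;G)\le 0$ for $p\in\cC^\infty_G$.

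Next I would exploit the orbit-invariance of $\chi$: as the orbit of $p$ stays in the compact $\cE$, the factors $\|DG(G^ip)\|$ and $\|DG(p)^{-1}\|$ are uniformly bounded, so $\chi$ is constant along $G$-orbits. Together with the $G$-invariance of $\cC^\infty_G$, this lets me move any chosen point into the ``central'' level-$k$ piece $\cC^{\infty,0}_G\equiv\Lambda_{k,G}(\cC^\infty_{G_k})$, where $\cC^\infty_{G_k}$ is the stable set of the renormalized map $G_k$. The structural input I would invoke is that the limit set inherits the self-similar renormalization microstructure of its finite approximations: $\cC^\infty_G=\bigcup_{i=0}^{2^k-1}G^i(\cC^{\infty,0}_G)$ with $G^{2^k}\arrowvert_{\cC^{\infty,0}_G}=\Lambda_{k,G}\circ G_k\circ\Lambda_{k,G}^{-1}$, exactly as for $\cC^k_G$. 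Granting this, for $p\in\cC^{\infty,0}_G$ the point $t\equiv\Lambda_{k,G}^{-1}(p)$ lies in the bounded, $G_k$-invariant set $\cC^\infty_{G_k}\subset\cE_1\cup\cE_3$, so the telescoping identity for $DG^i(p)$ from the preceding Lemma applies verbatim, with $\cC^\infty_{G_k}$ replacing $\cC_{G_k}$ and the uniform bound $\|DG_k\|_\cE\le B\equiv\sup_k\|DG_k\|_\cE<\infty$ (finite because $G_k\to G_*$ in $\cA(\rho)$, so Cauchy estimates bound the derivatives uniformly). Writing $i=q2^k+n$ with $n<2^k$, this yields $\|DG^i(p)\|\le (A/|\lambda_-|)^m(|\lambda_-|/\mu_-)^{j_m}B^q(|\lambda_-|/\mu_-)^k$, and hence, dividing by $i$ and letting $i\to\infty$, $\chi(p;G)\le\tfrac1{2^k}\log B$.

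Finally, since this bound holds for every $k$, letting $k\to\infty$ gives $\chi(p;G)\le 0$, which together with the first step yields $\chi(p;G)=0$. I expect the main obstacle to be the structural claim used in the third step: that accumulation points of $\cC^\infty_G$ lying in no finite $\cC^k_G$ --- most notably the orbit-closure of the origin $(0,0)$, which by Proposition \ref{convergence} is all of $\cC^\infty_G$ --- nevertheless sit inside a $\Lambda_{k,G}$-rescaled, $G_k$-invariant copy of a renormalized stable set. The cleanest route is to read this nested-cylinder structure off the adding-machine / presentation-function description outlined in Section \ref{Cantor_Set}, which realizes $\cC^\infty_G$ as an inverse limit with precisely this self-similarity; the derivative telescoping and the uniformity $B<\infty$ are then routine, being identical to the computation already performed in the preceding Lemma.
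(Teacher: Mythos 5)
Your proposal is correct, and its engine --- the telescoping bound $\|DG^i(p)\| \le (A/|\lambda_-|)^m (|\lambda_-|/\mu_-)^{j_m} B^q (|\lambda_-|/\mu_-)^k$, followed by $i \to \infty$ and then $k \to \infty$ --- is exactly the computation in the Lemma preceding the Corollary; the paper itself gives no further argument, passing from that Lemma to the Corollary with the single word ``Clearly''. The value of your write-up is that it supplies the two ingredients this deduction actually requires. First, the Lemma is only an upper bound, while the Corollary asserts equality with zero; your lower bound $\chi(p;G) \ge 0$ from area preservation ($\det DG^i(p)=1$ forces $\|DG^i(p)\|\ge 1$) is genuinely needed and appears nowhere in the paper. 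Second, and more substantively, the Lemma is stated for $p \in \cC^k_G$, whereas $\cC^\infty_G$ is only a Hausdorff limit of the non-nested sets $\cC^k_G$: since each $\cC^k_G$ is $G$-invariant and $(0,0)\notin\cC^k_G$ (the central piece satisfies $\Lambda_{k,G}(\cC_{G_k})\subset\Lambda_{k,G}(\Delta)$ with $(0,0)\notin\Delta$, and the components $\cC_G^{k,i}$, $i\ge 1$, have $x$-projections bounded away from $0$ by the containments of Lemma \ref{4-set} and Lemma \ref{separation}), no point of the orbit of the origin --- which by Proposition \ref{convergence} is dense in $\cC^\infty_G$ --- lies in any $\cC^k_G$. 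As Lyapunov exponents do not pass to Hausdorff limits, the Lemma's \emph{statement} cannot be invoked on $\cC^\infty_G$; one must rerun its \emph{proof} there, which is precisely what your self-similarity claim $\cC^\infty_G=\bigcup_{i=0}^{2^k-1}G^i\bigl(\Lambda_{k,G}(\cC^\infty_{G_k})\bigr)$, with $G^{2^k}$ conjugate to $G_k$ on the central piece, makes possible. That claim is true and can be established either as you suggest, from the nested pieces of Section \ref{Cantor_Set}, or directly by applying Proposition \ref{convergence} to $G_k$ and noting that finite unions and the continuous maps $G^a\circ\Lambda_{k,G}$ commute with Hausdorff limits. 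Two small corrections: the uniform constant should be $B=\sup_k\|DG_k\|_{\cE_1\cup\cE_3}$ (or a bound on a neighborhood of $\cC^\infty_{G_k}$) rather than $\sup_k\|DG_k\|_{\cE}$, since $G_k$ need not be defined on $\cE_2\cup\cE_4$; this costs nothing, because the $G_k$-orbit of $t=\Lambda_{k,G}^{-1}(p)$ stays in $\cC^\infty_{G_k}\subset\overline{\cE_1\cup\cE_3}$. And constancy of $\chi$ along an orbit needs only finiteness of $\|DG^{\pm 1}\|$ at the finitely many points traversed, not a uniform bound on $\cE$.
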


\medskip

\section{The stable set as a Cantor set}\label{Cantor_Set}

In this Section we will sketch the construction of the stable set using the method of presentation functions. The construction of this Section is almost identical to that of \cite{dCLM}, and we will therefore omit many  details. In fact, we will attempt to use the notation similar to that of  \cite{dCLM}.

Given $F \in \bW(\varrho)$ define the two {\it presentation functions}
$$\psi^F_0=\Lambda_F \quad {\rm and} \quad \psi^F_1=F \circ \psi^F_0,$$
and
\begin{eqnarray}
\Psi^{F,1}_0 &\equiv& \psi^F_0 \quad {\rm and} \quad \Psi^{F,1}_1 \equiv \psi^F_1,\\
\Psi^{F,n}_w&\equiv& \psi^F_{w_1} \circ \ldots \circ \psi^{R^n[F]}_{w_n}, \quad w=(w_1,\ldots,w_n) \in \{0,1\}^n.
\end{eqnarray}

\begin{lemma} For every  $F \in \bW(\varrho)$ there exists a simply connected closed set $B_F$ such that $B^1_0(F) \equiv \psi^F_0(B_F) \subset B_F$ and $B^1_1(F) \equiv \psi^F_1(B_F) \subset B_F$ are disjoint,  $F(B^1_1(F)) \cap B^1_0 \ne \emptyset$, and
\begin{equation}
\max\{ \|D \psi^F_0\|_{B_F}, \| D \psi^F_1 \|_{B_F}\} \le \vartheta, \quad \vartheta=0.272.
\end{equation}
 \end{lemma}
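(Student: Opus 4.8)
The plan is to exhibit an explicit simply connected closed set $B_F$ --- in fact a single topological disk $B$ that serves uniformly for every $F\in\bW(\varrho)$ through the interval enclosures $\bF$ and $\bLambda$ --- and then verify the five required properties by a combination of elementary geometry and rigorous interval computation, in the same spirit as Lemmas \ref{4-set} and \ref{separation} and Remark \ref{norms}. I would take $B$ to be a convex region symmetric about the $x$-axis (so as to respect the reversibility $T(x,u)=(x,-u)$, the same symmetry that produced $\cE_4=T(\bG(\cE_1))$) and containing both the origin and the relevant $F$-image region; a box or ellipse enclosing $\cE_1$ together with $\bF(\cE_1)$ is a natural candidate. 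The point $0$ plays a distinguished role, since $\Lambda_F(0)=0$ and the stable set is the closure of the orbit of $0$.

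The derivative bound splits into two parts. For $\psi^F_0=\Lambda_F$ the derivative is the constant diagonal matrix $\Lambda_F=\mathrm{diag}(\lambda_F,\mu_F)$, whose $l_2$ operator norm equals $\max\{|\lambda_F|,|\mu_F|\}=|\lambda_F|\le|\lambda_-|<0.272$, so this half is immediate from Theorem \ref{EKWTheorem}. For $\psi^F_1=F\circ\Lambda_F$ one has $D\psi^F_1(p)=DF(\Lambda_F p)\cdot\Lambda_F$, and the estimate $\|D\psi^F_1\|_{B}\le\vartheta=0.272$ is the computational heart of the lemma: it is established by bounding the interval matrix product $D\bF\cdot\bLambda$ over the set $\Lambda_F(B)$ and over the whole family $\bF$, exactly as the norms $A_1,A_3,a,b$ of Remark \ref{norms} were obtained (see \cite{GP}). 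What makes $\vartheta<1$ attainable is the strong anisotropy of $\Lambda_F$, namely the factor $\mu_F\approx 0.061$ in the vertical direction, which suppresses the expanding direction of $DF$.

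For the inclusions, $\psi^F_0(B)=\Lambda_F(B)\subset B$ follows geometrically, since $B$ is convex, symmetric about $0$, and $|\lambda_F|,\mu_F<1$; the inclusion $\psi^F_1(B)=F(\Lambda_F(B))\subset B$ is checked by a rigorous enclosure of $\bF(\bLambda(B))$. Disjointness of $B^1_0(F)=\Lambda_F(B)$ and $B^1_1(F)=F(\Lambda_F(B))$ follows from a separation estimate of the same type as Lemma \ref{separation}: $\Lambda_F(B)$ has $x$-projection contracted toward $0$, while $F(\Lambda_F(B))$ lands in a region whose $x$-projection is bounded away from $0$, so the two horizontal projections do not overlap. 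For the transition condition I would use the renormalization identity $F\circ F\circ\Lambda_F=\Lambda_F\circ R[F]$, which gives $F(B^1_1(F))=F^2(\Lambda_F(B))=\Lambda_F(R[F](B))$; since $\Lambda_F$ is injective this yields
$$F(B^1_1(F))\cap B^1_0(F)=\Lambda_F\bigl(R[F](B)\cap B\bigr),$$
so that the condition reduces to $R[F](B)\cap B\neq\emptyset$, which holds by renormalization invariance ($R[F]\in\bW(\varrho)$) and is confirmed by a direct interval enclosure of the overlap, precisely the ``linking'' that underlies the odometer structure in \cite{dCLM}.

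I expect the main obstacle to be the contraction estimate $\|D\psi^F_1\|_B\le 0.272$: it must hold simultaneously over the spatial set $B$ and over the infinite-dimensional family $\bW(\varrho)$, and the margin is modest, so $B$ must be chosen tightly enough that the images $\Lambda_F(B)$ stay clear of the strongly expanding part of $F$ near its hyperbolic fixed point $p_0$, yet large enough to accommodate the two inclusions and the transition condition at once. Balancing these competing constraints on a single $B$ is the delicate step; everything else is either elementary or a direct interval-arithmetic verification analogous to those already recorded in Lemmas \ref{4-set} and \ref{separation}.
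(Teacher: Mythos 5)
Your high-level strategy --- produce one explicit set and certify each required property by rigorous interval arithmetic --- is the same as the paper's, but the set you propose is not, and the difference lands exactly on the step you yourself flag as delicate and leave unresolved. The paper never works with a single convex region. It first certifies a large ellipse $\tilde B$ centred at $(0.47,-0.04)$, for which the computer verifies $\psi^F_0(\tilde B)\subset\tilde B$, $\psi^F_1(\tilde B)\subset\tilde B$, disjointness of these two images, and $F(\psi^F_1(\tilde B))\cap\psi^F_0(\tilde B)\ne\emptyset$; it then takes
\begin{equation*}
B_F=\psi^F_0(\tilde B)\,\cup\,\psi^F_1(\tilde B)\,\cup\,\hat B
\end{equation*}
(the $\cap$'s printed in the paper are evidently typos, since the first two sets are disjoint), where $\hat B\subset\tilde B$ is a second, extremely flat ellipse (vertical semi-axis about $0.0024$) whose boundary meets each image in a single arc, so the union is simply connected. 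This construction decouples the two constraints you are trying to balance: since $B_F\subset\tilde B$, the inclusions come for free ($\psi^F_i(B_F)\subset\psi^F_i(\tilde B)\subset B_F$), disjointness is inherited from $\tilde B$, and the contraction bound $\vartheta=0.272$ needs to be verified only on the thin, dynamically adapted set $B_F$, i.e.\ $DF$ is only ever evaluated on $\Lambda_F(B_F)$, a union of three slivers. On your candidate --- a convex box or ellipse containing both $\cE_1$ and the image region near $x\approx 1$ --- the claim $\|D(F\circ\Lambda_F)\|_{B}\le 0.272$ requires controlling $DF$ on all of $\Lambda_F(B)$, a solid region with $x$-extent of order $[-0.3,0.3]$; that is a strictly stronger numerical statement, nothing in the paper's computations supports it, and there is no room to weaken $\vartheta$, because part 5) of Lemma \ref{pieces} needs precisely $-\log 2/\log\vartheta<0.5324$. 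So your plan may well fail at its central step, and the union-plus-connector construction is exactly the missing idea that makes the lemma provable with the stated constant.

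Two further points. First, your claim that $\psi^F_0(B)=\Lambda_F(B)\subset B$ ``follows geometrically'' from convexity, symmetry about $0$, and $|\lambda_F|,\mu_F<1$ is false as stated: a thin convex region symmetric about the origin but aligned with the diagonal is not preserved by $\mathrm{diag}(\lambda_F,\mu_F)$, and since $\lambda_F<0$ the map moreover flips the $x$-axis; you need an axis-aligned box or ellipse positioned so that $|\lambda_F|\,x_{\max}\le|x_{\min}|$, or simply one more interval check. Second, on the credit side, your reduction of the transition condition via $F\circ F\circ\Lambda_F=\Lambda_F\circ R[F]$, giving $F(B^1_1)\cap B^1_0=\Lambda_F\bigl(R[F](B)\cap B\bigr)$, is correct and genuinely cleaner than what the paper does: if one and the same $B$ works for every map in the $R$-invariant set $\bW(\varrho)$, then $\psi^{R[F]}_1(B)\subset R[F](B)\cap B$, so the transition condition costs nothing extra. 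Note, however, that this argument requires $B$ to be independent of $F$, which the paper's $B_F$ is not --- the paper instead verifies the intersection directly on $\tilde B$ --- so the trick is tied specifically to a uniform-set approach like yours.
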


\begin{proof}
First, we verify the following on the computer:
$$
\psi^F_0(\tilde{B}) \subset \tilde{B}, \quad \psi^F_1(\tilde{B}) \subset \tilde{B} \quad {\rm and} \quad F( \psi^F_1(\tilde{B})) \cap \psi^F_0(\tilde{B}) \ne \emptyset
$$
for all $F \in \bW(\varrho)$, where
$$
\tilde{B}=\{(x,u) \in \fR^2: {(x-0.47)^2 \over 0.82^2}+{(u+0.04)^2 \over 0.301398806^2} \le 1\}.
$$ 
We also check that the sets $\psi^F_0(\tilde{B}) \subset \tilde{B}$ and $\psi^F_1(\tilde{B}) \subset \tilde{B}$ are disjoint.

Second, we verify that the boundary of the ellipse $\hat{B} \subset \tilde{B}$,
$$
\hat{B}=\{(x,u) \in \fR^2: {(x-0.47)^2 \over 0.53^2}+{u^2 \over 0.002370226^2} \le 1\},
$$ 
intersects each of $\psi^F_0(\tilde{B})$,  $\psi^F_1(\tilde{B})$ along a single arc. Therefore, the set 
$$B \equiv \psi^F_0(\tilde{B}) \cap \psi^F_1(\tilde{B}) \cap \hat{B}$$
(see Figure $\ref{cantor_s}$) is simply connected, and satisfies the claim. 
\end{proof}

\begin{figure}[t]\label{cantor_s}
\begin{center}
\resizebox{90mm}{!}{\includegraphics[angle=-90]{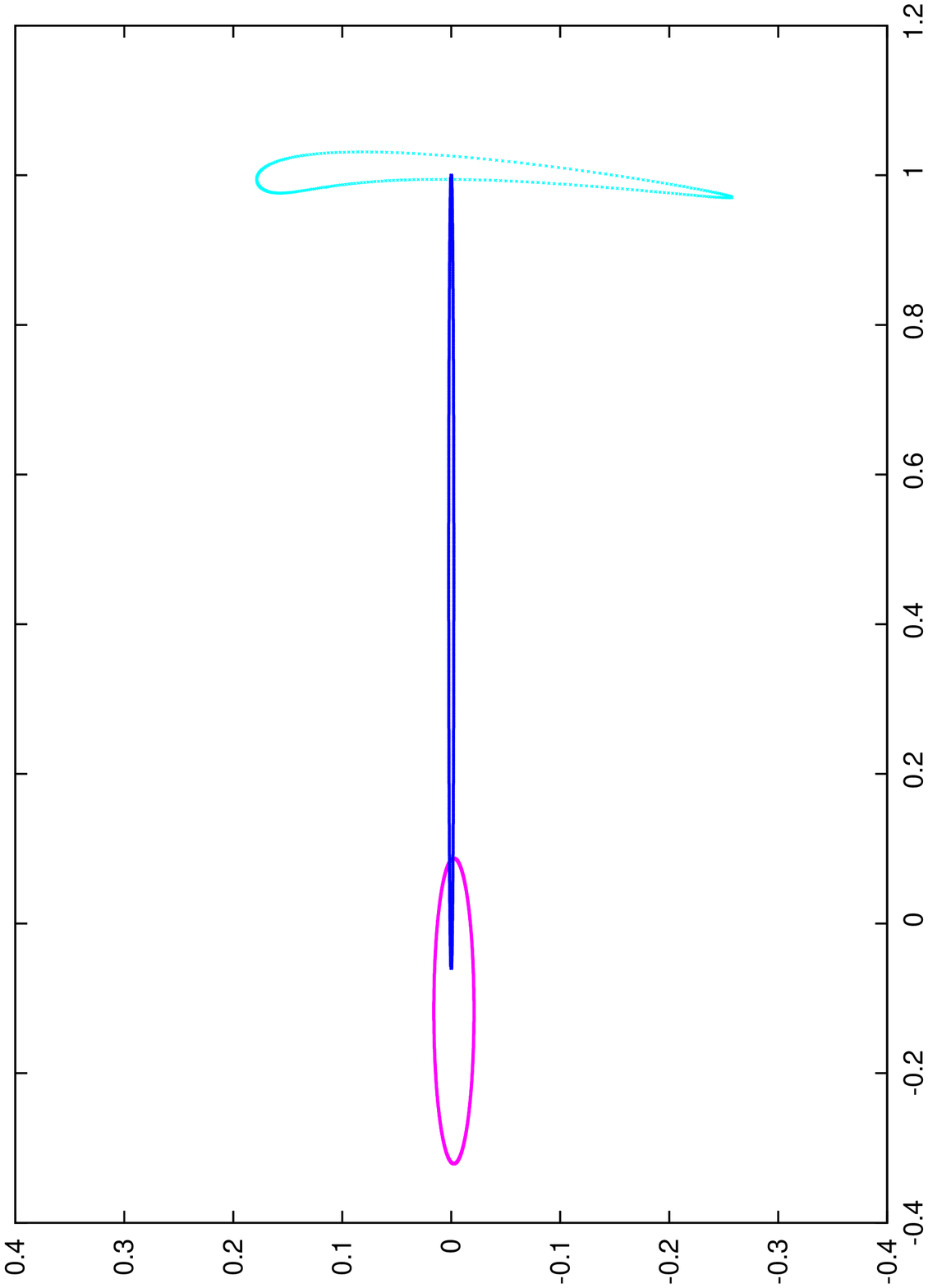}}
\caption{ Sets $\psi^F_0(\tilde{B})$ (magenta),  $\psi^F_1(\tilde{B})$ (cyan) and $\hat{B}$ (blue).}
\end{center}
\end{figure}

Set $B^1_0(F)=\psi^F_0(B_F)$,  $B^1_1(F)=\psi^F_1(B_F)$, and define ``pieces''
$$B^n_w(F)=\Psi^{F,n}_w(B_F), \quad w \in \{0,1\}^n.$$

One can view $\{0,1\}^n$ as an additive group of residues {\it mod} $2^n$ via an identification
$$w \rightarrow \sum_{k=0}^{n-1} w_{k+1} 2^k.$$
Let $p: \{0,1\}^n \rightarrow  \{0,1\}^n$, be the operation of adding $1$ in this group. The following Lemma has been proved in \cite{dCLM}, and it's proof holds in our case of area-preserving maps word by word:
\begin{lemma} \label{pieces}
$$\phantom{aaa}$$
\begin{itemize}
\item[1)] The above families of pieces are nested:
$$B^n_{wv} \subset B^{n-1}_w, \quad w \in \{0,1\}^{n-1}, \quad v \in  \{0,1\}.$$
\item[2)] The pieces $B^n_w, \quad w \in \{0,1\}^n$ are pairwise disjoint. 

\item[3)] $F$ permutes the pieces as follows: $F(B^n_w)=B^n_{p(w)}$ unless $p(w)=0^n$. If $p(w)=0^n$, then $F(B^n_w) \cap B^n_{0^n} \ne \emptyset$.

\item[4)] ${\rm diam}( B_w^n) \le {\rm const} \, \vartheta^n$.

\item[5)] $C_d^H(\cC^\infty_F) \le -{\log(2) / \log(\vartheta)} < 0.5324$, where
\begin{equation}\label{CS}
\cC^\infty_F \equiv \bigcap_{n=1}^\infty \bigcup _{w \in \{0,1\}^n} B_w^n.
\end{equation}
\end{itemize}
\end{lemma}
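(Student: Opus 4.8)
The plan is to read off every claim from the renormalization self-similarity of the pieces. First I would record the two functional identities satisfied by the presentation functions: since $R[F]=\Lambda_F^{-1}\circ F\circ F\circ\Lambda_F$, a direct substitution gives
$$F\circ\psi^F_0=\psi^F_1\quad\text{and}\quad F\circ\psi^F_1=F^2\circ\Lambda_F=\Lambda_F\circ R[F]=\psi^F_0\circ R[F],$$
valid wherever the compositions are defined. Because $\bW(\varrho)$ is invariant under renormalization, $R^k[F]\in\bW(\varrho)$ for every $k$, so the preceding Lemma applies verbatim to each $R^k[F]$: the images $\psi^{R^k[F]}_0(B_{R^k[F]})$ and $\psi^{R^k[F]}_1(B_{R^k[F]})$ are disjoint and contained in $B_{R^k[F]}$, $\|D\psi^{R^k[F]}_v\|\le\vartheta$, and $R^k[F]$ satisfies the overflow relation $R^k[F](\psi^{R^k[F]}_1(B_{R^k[F]}))\cap\psi^{R^k[F]}_0(B_{R^k[F]})\ne\emptyset$. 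Throughout I would use the recursive description $B^n_w(F)=\psi^F_{w_1}(B^{n-1}_{w'}(R[F]))$, where $w'=(w_2,\dots,w_n)$, in which this structure is transparent, and prove (1)--(3) by induction on $n$.

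For the nesting (1) the base case is $\psi^G_v(B_G)\subset B_G$, and the inductive step applies the injective map $\psi^F_{w_1}$ to the level-$(n-1)$ nesting inclusion for the renormalized map $R[F]$. For the disjointness (2), the two children $B^n_{w0}$ and $B^n_{w1}$ of a common piece are the images under one injective composition of presentation functions of the two disjoint sets furnished by the previous Lemma; any two distinct pieces $B^n_w$ and $B^n_{\tilde w}$ lie in the two disjoint children of their longest common ancestor, so (2) reduces to (1), the disjointness of siblings, and the injectivity of the presentation functions. For the diameter bound (4) the chain rule and the uniform estimate give
$$ {\rm diam}(B^n_w)\le\Bigl(\prod_{k=0}^{n-1}\|D\psi^{R^k[F]}_{w_{k+1}}\|\Bigr)\,{\rm diam}(B_{R^n[F]})\le{\rm const}\cdot\vartheta^n, $$
where the nesting (1) guarantees that each factor is evaluated on a set on which $\|D\psi\|\le\vartheta$ holds, and the seeds $B_{R^n[F]}$ have uniformly bounded diameter.

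The essential step, which I expect to be the main obstacle, is the permutation property (3), since it is what encodes the adding-machine dynamics. I would prove $F(B^n_w)=B^n_{p(w)}$ by induction on $n$, splitting into cases according to the binary carry in $p$. When $w_1=0$ there is no carry and $F\circ\psi^F_0=\psi^F_1$ gives $F(B^n_{0w'})=\psi^F_1(B^{n-1}_{w'}(R[F]))=B^n_{1w'}=B^n_{p(w)}$. When $w_1=1$ and $w\ne 1^n$, the identity $F\circ\psi^F_1=\psi^F_0\circ R[F]$ transfers the action of $F$ to that of $R[F]$ one level down, and the inductive hypothesis $R[F](B^{n-1}_{w'}(R[F]))=B^{n-1}_{p(w')}(R[F])$ together with $p(w)=(0,p(w'))$ closes the step. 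The delicate case is the overflow $w=1^n$: here $w'=1^{n-1}$ triggers the level-$(n-1)$ overflow, so one has only $R[F](B^{n-1}_{1^{n-1}})\cap B^{n-1}_{0^{n-1}}\ne\emptyset$; applying the injective $\psi^F_0$ and using $\psi^F_0(A)\cap\psi^F_0(C)=\psi^F_0(A\cap C)$ propagates this to $F(B^n_{1^n})\cap B^n_{0^n}\ne\emptyset$, with the base case $n=1$ supplied by the previous Lemma. The real care here is bookkeeping: keeping the seeds $B_{R^k[F]}$ and the domains of all the compositions consistent across levels, so that each functional identity is applied only where it is valid.

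Finally, for (5) the families $\{B^n_w:w\in\{0,1\}^n\}$ are, by (1), a nested sequence of covers of $\cC^\infty_F$ consisting of $2^n$ sets of diameter at most ${\rm const}\cdot\vartheta^n$ by (4). Hence for the cover $\cB_n$ one has
$$ C_d[\cB_n]=\sum_{w\in\{0,1\}^n}{\rm diam}(B^n_w)^d\le{\rm const}^d\,(2\vartheta^d)^n, $$
which tends to $0$ as $n\to\infty$ exactly when $2\vartheta^d<1$, that is when $d>-\log 2/\log\vartheta$. Therefore $C_d^H[\cC^\infty_F]=0$ for every such $d$, so ${\rm dim}_H(\cC^\infty_F)\le -\log 2/\log\vartheta$, and substituting $\vartheta=0.272$ yields the stated bound $<0.5324$.
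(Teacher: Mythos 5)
Your proposal is correct, but note that the paper itself does not actually prove this lemma: it states that ``the following Lemma has been proved in \cite{dCLM}, and its proof holds in our case of area-preserving maps word by word.'' What you have written is, in effect, a reconstruction of that de Carvalho--Lyubich--Martens argument: the two functional identities $F\circ\psi^F_0=\psi^F_1$ and $F\circ\psi^F_1=\psi^F_0\circ R[F]$ (both of which you derive correctly from $R[F]=\Lambda_F^{-1}\circ F\circ F\circ\Lambda_F$), induction on $n$ with the carry/no-carry/overflow trichotomy for the odometer, the chain rule for the diameter bound, and the standard $2^n$-cover estimate $C_d[\cB_n]\le {\rm const}^d\,(2\vartheta^d)^n$ for part (5); so in substance your route coincides with the proof the paper is invoking, and you supply details the paper omits. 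Two small points deserve attention. First, your recursion $B^n_w(F)=\psi^F_{w_1}\bigl(B^{n-1}_{w'}(R[F])\bigr)$ uses $B_{R^{n-1}[F]}$ as the innermost seed, whereas the paper's literal definition $B^n_w(F)=\Psi^{F,n}_w(B_F)$ keeps $B_F$ at every level; these agree only under the renormalization-adapted reading. Your reading is the right one: Lemma 7.1 applied to $R^k[F]$ gives $\psi^{R^k[F]}_v(B_{R^k[F]})\subset B_{R^k[F]}$, not $\psi^{R^k[F]}_v(B_F)\subset B_F$, so the nesting in part (1) only comes out cleanly with the seeds you use (this is exactly the ``bookkeeping'' you flag, and it is where the paper's shorthand is loose, since $\bW(\varrho)$-invariance under $R$ makes Lemma 7.1 available at every level but for the map-adapted sets). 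Second, part (5) as stated conflates the Hausdorff content $C^H_d$ with the Hausdorff dimension; your argument silently and correctly reads it as ${\rm dim}_H(\cC^\infty_F)\le -\log 2/\log\vartheta\approx 0.5324$, which is what is used later in the Main Theorem. The only cosmetic gap is the diameter estimate ${\rm diam}(\psi(S))\le\|D\psi\|\,{\rm diam}(S)$, which strictly requires convexity of $S$ or a path-metric argument; since all seeds sit inside fixed ellipses, the intrinsic and Euclidean diameters are uniformly comparable and the constant is absorbed, but a one-line remark to that effect would make the induction airtight.
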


Since the set $\tilde{B}$ from Lemma 7.1 contains $(0,0)$, so does each piece $B^n_{0^n}$. It follows from part 3) of Lemma 7.2 that the set $\bigcup _{w \in \{0,1\}^n} B_w^n$ contains iterates $G^i((0,0))$ up to order $2^n$. Therefore, the Cantor set $\cC_F^\infty$ is the closure of the orbit of zero, and is equal to $\cC^\infty_G \cup F(\cC^\infty_G) \cup F(F(\cC^\infty_G))$.

Recall the definition $(\ref{dyadic_group})$  of the dyadic group.  Lemma $\ref{pieces}$ implies the following:
\begin{corollary}
The restriction $F \arrowvert_{\cC^\infty_F}$ is homeomorphic to $p: \{0,1\}^\infty \rightarrow  \{0,1\}^\infty$ via $h: \{0,1\}^\infty \rightarrow \cC^\infty_F$ defined as
$$h(w)=\bigcap_{n=1}^\infty B^n_{w_1 w_2 \ldots w_n}.$$
\end{corollary}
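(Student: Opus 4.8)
The plan is to verify in turn that $h$ is well-defined and single-valued, that it is a homeomorphism onto $\cC^\infty_F$, and that it conjugates $p$ to $F\arrowvert_{\cC^\infty_F}$; every ingredient is already contained in Lemma $\ref{pieces}$, so the argument is the standard one for odometers (cf. \cite{dCLM}). For $w \in \{0,1\}^\infty$ write $w|_n=(w_1,\ldots,w_n)$ for its truncation. First I would check that $h$ is well-defined: by part $1)$ the compact sets $B^n_{w|_n}(F)$ are nested and decreasing in $n$, and by part $4)$ their diameters satisfy ${\rm diam}(B^n_{w|_n}) \le {\rm const}\,\vartheta^n \to 0$; hence by the Cantor intersection theorem $h(w)=\bigcap_n B^n_{w|_n}$ is a single point of $\fR^2$.

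Next I would establish that $h$ is a continuous bijection onto $\cC^\infty_F$. Injectivity is immediate from part $2)$: if $w \ne w'$ and $n$ is the first index at which they differ, then $w|_n \ne w'|_n$, so $B^n_{w|_n}$ and $B^n_{w'|_n}$ are disjoint while $h(w) \in B^n_{w|_n}$ and $h(w') \in B^n_{w'|_n}$. Surjectivity uses the definition $(\ref{CS})$: any $x \in \cC^\infty_F$ lies, for each $n$, in a unique piece $B^n_{v^{(n)}}$ (uniqueness by part $2)$), and part $1)$ forces $v^{(n)}$ to be the truncation of $v^{(n+1)}$; these consistent symbols assemble into a single $w \in \{0,1\}^\infty$ with $h(w)=x$. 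Continuity follows from part $4)$: if $w$ and $w'$ agree on their first $n$ symbols then $h(w),h(w') \in B^n_{w|_n}$, whence $|h(w)-h(w')| \le {\rm const}\,\vartheta^n$, a modulus of continuity with respect to the standard ultrametric on the inverse limit. Since $\{0,1\}^\infty$ is compact and $\fR^2$ is Hausdorff, a continuous bijection is automatically a homeomorphism.

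Finally I would prove the conjugacy $F \circ h = h \circ p$. The key algebraic fact is that reduction modulo $2^n$ is a group homomorphism, so truncating commutes with adding one: $(p(w))|_n = p(w|_n)$, where on the right $p$ is the finite odometer on $\{0,1\}^n \cong \fZ/2^n\fZ$. Fix $w$ and put $x=h(w)$. For every level $n$ with $w|_n \ne 1^n$, part $3)$ gives $F(B^n_{w|_n})=B^n_{p(w|_n)}=B^n_{(p(w))|_n}$, hence $F(x) \in B^n_{(p(w))|_n}$. If $w \ne 1^\infty$, let $m$ be the least index with $w_m=0$; then $w|_n \ne 1^n$ for all $n \ge m$, so $F(x)$ lies in $B^n_{(p(w))|_n}$ for all $n \ge m$, and by nestedness (part $1)$) the tail intersection equals the full one, giving $F(x)=\bigcap_n B^n_{(p(w))|_n}=h(p(w))$.

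The single exceptional point $w=1^\infty$, where part $3)$ produces only the nonempty intersection $F(B^n_{1^n}) \cap B^n_{0^n} \ne \emptyset$ at every level because of the propagating carry, is handled by continuity: $\{0,1\}^\infty$ is a Cantor set, so $1^\infty$ is a limit of points $w \ne 1^\infty$; since $h$, $F$ and $p$ are all continuous and $F \circ h = h \circ p$ holds off $1^\infty$, the identity extends to $1^\infty$ by passing to the limit (consistently, $h(p(1^\infty))=h(0^\infty)=(0,0)$, the image of the fixed symbol $0^\infty$). I expect this reconciliation of the finite-level permutation of part $3)$ — an equality except precisely at the carry $p(w|_n)=0^n$ — with the infinite odometer to be the only genuinely delicate point; everything else is soft topology driven by the contraction estimate in part $4)$.
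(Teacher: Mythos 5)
Your proof is correct and follows essentially the same route as the paper, which states this corollary without proof as an immediate consequence of the lemma on nested pieces (following the construction of de Carvalho--Lyubich--Martens); your write-up simply supplies the standard omitted details: well-definedness and bijectivity from nestedness, disjointness and shrinking diameters, continuity plus compactness for the homeomorphism, and the conjugacy with the carry point $1^\infty$ handled by density and continuity. The only nitpick is the parenthetical calling $0^\infty$ a ``fixed symbol'' --- the odometer has no fixed points ($p(0^\infty)=10^\infty$) --- but nothing in your argument uses this, since the consistency check only needs $h(0^\infty)=(0,0)$.
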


\medskip

\section{``Weak'' rigidity}\label{Weak_Rigidity}

In this Section we will demonstrate that the map $\cH_G$ is bi-Lipschitz for a subset of infinitely renormalizable maps.

\medskip

\begin{prop}\label{Lipschitz}
There exist $\varrho>0$ and $\omega$, 
$$\omega=\min \left\{  {\mu_* \over |\lambda_*|},  {b \over A} \right\},$$
such that for all $F \in \bW_\omega(\varrho)$ the transformation $\cH_G$ is bi-Lipschitz with a constant $\cL=\cL(\varrho)$, that satisfies $\cL(\varrho) \converge{{\varrho \rightarrow 0}} 1$.
\end{prop}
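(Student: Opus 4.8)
The plan is to estimate $|\cH_G(p)-\cH_G(q)|$ against $|p-q|$ directly, since for a map of the totally disconnected set $\cC_*^\infty$ the relevant notion is (\ref{bi-Lip}) rather than a derivative. By Proposition \ref{convergence}, $\cC_*^\infty=\overline{\bigcup_i G_*^i((0,0))}$, so it suffices to establish (\ref{bi-Lip}) with a \emph{uniform} constant on the dense set of orbit points $p=G_*^{i_1}((0,0))$, $q=G_*^{i_2}((0,0))$ and then extend to the closure by uniform continuity (continuity of $\cH_G$ being read off from the uniform convergence in (\ref{Hconv})). For such points the definition (\ref{Hconv}) gives $\cH_G(G_*^{i}((0,0)))=G^{i}((0,0))$, and by the factorization (\ref{GiT}) together with $\Lambda_{j_m,G}^{-1}((0,0))=(0,0)$ these are simply the iterated $T$-compositions $G^i((0,0))=\TiG{i}((0,0))$ and $G_*^i((0,0))=\Ti{i}((0,0))$. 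The statement thus reduces to comparing the families $\TiG{i}$ and $\Ti{i}$ applied to the origin.

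The local engine is the inductive reduction carried out in Proposition \ref{Hconvergence}, which rewrites the approximate conjugacy $G^i\circ\Lambda_{k,G}\circ\Lambda_*^{-k}\circ G_*^{-i}$ on $\cV_*^{k,i}$, starting from (\ref{Gjm}), as $\Lambda_{j_1,G}\circ(Id+h_{j_1})\circ\Lambda_*^{-j_1}$, where $j_1$ is the lowest set bit of $i$ and $|h_{j_1}|$ carries a convergent geometric factor times $c(\varrho)$. The diagonal map $\Lambda_{j_1,G}\circ\Lambda_*^{-j_1}$ has entries $\lambda_{j_1,G}/\lambda_*^{j_1}$ and $\mu_{j_1,G}/\mu_*^{j_1}$; since $F\in\bW_\omega(\varrho)$ gives $|\lambda_{G_n}-\lambda_*|,|\mu_{G_n}-\mu_*|\le c(\varrho)\,\omega^n$, these ratios are products $\prod_n(1+O(\omega^n))$ converging, uniformly in $j_1$, to limits within $\cL(\varrho)$ of $1$, with $\cL(\varrho)\to1$ as $\varrho\to0$. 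First I would upgrade the sup-norm bound on $h_{j_1}$ to control of its increments across a piece: every map is analytic on a fixed complex bi-disk and the real pieces contain balls of radius $\ge c\,\kappa_-^k$ by Lemma \ref{lMSBds}, so a Cauchy estimate converts $|h_{j_1}|\le c(\varrho)(\cdots)$ into commensurate control of the variation of $Id+h_{j_1}$. Consequently the approximate conjugacy distorts distances \emph{within} a piece by at most $\cL(\varrho)\to1$.

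The main obstacle is to glue these per-piece estimates into a global bound, i.e. to compare distances between points in \emph{different} pieces and to keep the level-to-level corrections summable; this is exactly where both constraints defining $\omega$ enter. For $p,q$ first separating at renormalization level $n$, the distance $|p-q|$ is comparable to the scale of the common level-$n$ piece, and Lemma \ref{separation} supplies the lower bound (bounded geometry) guaranteeing that $|p-q|$ is genuinely \emph{commensurate} with that scale rather than merely bounded above by it; the same holds for $\cH_G(p),\cH_G(q)$. The condition $\omega\le\mu_*/|\lambda_*|$ forces the $\cP_u$-component of the conjugacy error, which carries the factor $\mu_{k,G}$ in Lemma \ref{HkGLemma}, to stay small relative to the thinnest, $u$-directional scale of the pieces, so that the error never dominates the geometry; the condition $\omega\le b/A$ ensures that when the error committed at level $j$ is transported through the remaining $T$-maps, whose singular values lie in $[b,A]$ by Remark \ref{norms}, the resulting relative distortions form a geometric series in $\omega A/b\le1$ that converges.

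Assembling the pieces, I would telescope the consecutive-level comparisons (\ref{iHkGdiff}) and (\ref{iHkGconv}), sum the controlled relative errors using $\omega A/b\le1$, and combine with the within-piece near-isometry of the second paragraph to obtain
$$\tfrac{1}{\cL(\varrho)}\,|p-q|\le|\cH_G(p)-\cH_G(q)|\le\cL(\varrho)\,|p-q|$$
on orbit points with $\cL(\varrho)\to1$, and then pass to the closure; this is (\ref{bi-Lip}). The delicate point throughout is to keep every comparison \emph{relative} to the local scale: the absolute bounds of Proposition \ref{Hconvergence} degenerate geometrically, and only their ratios against the equally geometrically small piece diameters remain meaningful, which is precisely why the two-sided bound $\omega\le\min\{\mu_*/|\lambda_*|,\,b/A\}$ on the renormalization convergence rate is what makes the argument close.
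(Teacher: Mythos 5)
Your proposal assembles the right ingredients (reduction to the orbit of the origin, the separation Lemma \ref{separation}, the binary decomposition with a common prefix, the $[b,A]$ singular-value bounds, both constraints on $\omega$), but its central gluing step is a genuine gap. You claim that if $p,q$ first separate at level $n$, then $|p-q|$ --- and likewise $|\cH_G(p)-\cH_G(q)|$ --- is commensurate, with a constant independent of $n$, with ``the scale of the common level-$n$ piece'', citing Lemma \ref{separation} as bounded geometry. But Lemma \ref{separation} only yields a definite separation \emph{at the moment of splitting}, i.e.\ before the common prefix maps are applied: this is exactly $(\ref{lower_bound})$--$(\ref{commens_bound})$, which pin $\arrowvert \hat{p}_q-\hat{s}_q\arrowvert$ to the scale $|\lambda_*|^{j_q-j_{q-1}}$. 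To say anything about $|p-q|$ itself you must transport this through the $q-1$ common maps $\Tis{i}{1}{q-1}$, and all that is known about them is $(\ref{upper_lower})$: the transported distance is pinned only within a window whose endpoints differ by the factor $(A/b)^{q-1}\,(|\lambda_*|/\mu_*)^{j_{q-1}-(q-1)}$, which diverges with the level. So ``distance $\asymp$ common-piece scale with a uniform constant'' is simply not available for this anisotropic Cantor set, and neither condition on $\omega$ can repair it: those conditions control the size of the $G$-versus-$G_*$ \emph{errors} ($\sim \omega^{j}$) against this worst-case amplification, not the amplification itself. The paper's remark following the proof --- that commensurability holds only for $i\ne\hat{i}$, because for $i=\hat{i}$ the lower bound $(\ref{lower_bound})$ does not exist --- is precisely the symptom of the missing bounded geometry you are assuming.

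What the paper does instead, and what your outline lacks, is to avoid comparing either distance to an absolute scale: both pairs are transported \emph{simultaneously} through the two compositions $\TisG{i}{1}{q-1}$ and $\Tis{i}{1}{q-1}$, which agree to ${\bf O}(\omega^{j_l})$ at each step; an inductive Mean Value Theorem argument (Step 3) shows the two transported differences agree up to a factor $I+{\bf O}(\varrho)$, and then a single comparison of $I_1$ with $I_2$ (Step 4) bounds the error-to-signal ratio by ${\rm const}\cdot\max\{|\lambda_*|/\mu_*,A/b\}^{j_{q-1}}\omega^{j_{q-1}}$ --- this is where $\omega\le\min\{\mu_*/|\lambda_*|,b/A\}$ enters, as a bound on one term, not as a convergent geometric series (at equality your ``geometric series in $\omega A/b\le 1$'' would not even converge). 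Two secondary flaws: a sup bound for $h_{j_1}$ on the \emph{real} piece together with real balls of radius $c\,\kappa_-^k$ from Lemma \ref{lMSBds} does not give Cauchy estimates (those require bounds on complex neighborhoods of definite size, which the inductive construction of $(\ref{Gjm})$ establishes only on real sets); and even granting $\|Dh_{j_1}\|\lesssim c(\varrho)\,\omega^{j_1}$, the distortion of $\Lambda_{j_1,G}\circ(Id+h_{j_1})\circ\Lambda_*^{-j_1}$ is governed by the conjugated matrix $\Lambda_{j_1,G}\cdot Dh_{j_1}\cdot\Lambda_*^{-j_1}$, whose $(1,2)$ entry is amplified by $|\lambda_*|^{j_1}/\mu_*^{j_1}\approx 4^{j_1}$, so your ``within-piece near-isometry'' already requires $\omega\le\mu_*/|\lambda_*|$ --- a point your sketch asserts away as a direct consequence of the Cauchy estimate.
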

\begin{proof}
`Let $i=2^{j_1} + \ldots + 2^{j_m}$ and $\hat{i}=2^{\hat{j}_1}+\ldots +2^{\hat{j}_n}$, $i \ne \hat{i}$, be arbitrary but fixed.  Let $\{p_{k,i}^*\}_{k=\max(j_m,\hat{j}_n)}^\infty$ and  $\{s_{k,\hat{i}}^*\}_{k=\max(j_m,\hat{j}_n)}^\infty$ be any two sequences of points that satisfy:   $p_{k,i}^* \in \cC^{k,i}_*$ and  $s_{k,\hat{i}}^* \in \cC^{k,\hat{i}}_*$,  $p_{k,i}^* \ne s_{k,\hat{i}}^*$.

We would like to show, that there exist $\varrho>0$, $\omega<\nu$ and  $\cL=\cL(\varrho)$ , such that if $F \in \bW_\omega(\varrho)$ then the distances 
$$|\iHkG(p_{k,i}^*)-\!\!\phantom{a}_{\hat{i}}H_{k,G}(s_{k,\hat{i}}^*)| \quad {\rm and} \quad |p_{k,i}^*-s_{k,\hat{i}}^*|$$
are commensurate with a constant $\cL(\varrho)$, independent of $k$ and approaching $1$ as $\varrho \rightarrow 0$.

Commensurability, together with convergence property $(\ref{Hconv})$ implies that the limit $\cH_G$ is a bi-Lipschitz transformation.

Define the following points:

$$p_{k,i}=\iHkG(p_{k,i}^*),$$ 
$$s_{k,\hat{i}}=\hiHkG(s_{k,\hat{i}}^*),$$
$$p_{k}^*=G^{-i}_*(p_{k,i}^*) \equiv \Lambda_*^{k} (p^*),$$
$$s_{k}^*=G^{-\hat{i}}_*(s_{k,\hat{i}}^*)\equiv \Lambda_*^{k} (s^*),$$
$$p_{k,i}\equiv G^i(\Lambda_{k,G} (p))\equiv G^i(\Lambda_{k,G} (H_{G_k}(p^*))),$$ 
$$s_{k,\hat{i}}\equiv G^{\hat{i}} (\Lambda_{k,G} (s))\equiv  G^{\hat{i}}( \Lambda_{k,G} (H_{G_k}(s^*))),$$
%\medskip
%
%\begin{tabular}{l l l}
% $p_{k,i}=\iHkG(p_{k,i}^*)$, & $p_{k}^*=G^{-i}_*(p_{k,i}^*) \equiv \Lambda_*^{k} (p^*)$, & $p_{k,i}\equiv G^i(\Lambda_{k,G} (p))\equiv G^i(\Lambda_{k,G} (H_{G_k}(p^*)))$ ,\\
% $s_{k,\hat{i}}=\hiHkG(s_{k,\hat{i}}^*)$, & $s_{k}^*=G^{-\hat{i}}_*(s_{k,\hat{i}}^*)\equiv \Lambda_*^{k} (s^*)$, & $s_{k,\hat{i}}\equiv G^{\hat{i}} (\Lambda_{k,G} (s))\equiv  G^{\hat{i}}( \Lambda_{k,G} (H_{G_k}(s^*)))$.
%\end{tabular}
%
%\medskip
where the the last four lines are understood as definitions of points $p$, $s \in \cC_{G_k}$ and $p^*$, $s^* \in \cC_*$.

For any $j<k$ and $F \in \bW_\omega(\varrho)$ there exists $c'_7(\varrho)$ such that $|\lambda_{G_{j}}| \le |\lambda_*|+c'_7(\varrho) \omega^j$, therefore
\begin{eqnarray}\label{LambdaDiff}
\nonumber | \Lambda_{j,G}^{-1} \circ \Lambda_{k,G}(p) -\Lambda_*^{k-j}(p) | &=&\left[\prod_{n=j}^{k-1}(|\lambda_*|+c'_7(\varrho) \omega^{n})-|\lambda_*|^{k-j} \right] |p|\\
\nonumber &=&|\lambda_*|^{k-j} \left[{\rm exp} \left\{ \sum_{n=j}^{k-1}\ln \left(|1+c'_7(\varrho) {\omega^{n}\over |\lambda_*|}\right) \right\}-1 \right] |p|\\
\nonumber &=&|\lambda_*|^{k-j} \left[{\rm exp} \left\{ c_7''(\varrho) \omega^j { 1-\omega^{k-j} \over 1-\omega } \right\}-1 \right] |p|\\
 &\le& c_7(\varrho)\,|\lambda_*|^{k-j} \omega^j |p|,
\end{eqnarray}
where $c_7(\varrho)$ and $c_7''(\varrho)$ are some constants. This, together with $(\ref{unif_converge_omega})$ implies the following bound for any $p^*$ in $\cC_*$ and $p=H_{G_k}(p^*)$ and  all $j<k$
\begin{eqnarray}\label{LambdaDiff*}
\nonumber | \Lambda_{j,G}^{-1}( \Lambda_{k,G}(p)) -\Lambda_*^{k-j}(p^*) | &\le& |\lambda_*|^{k-j}|p-p^*| +c_7(\varrho) \omega^{j} |\lambda_*|^{k-j}|p| \\
\nonumber &\le& C(\varrho) \, |\lambda_*|^{k-j} \omega^{k} +c_8(\varrho) \omega^{j} |\lambda_*|^{k-j},
\end{eqnarray}
where $c_8(\varrho)=c_7(\varrho) {\rm diam}(\{C_G \cup (0,0)\}).$

Next, suppose that $q$ is the smallest integer such that $j_q \ne \hat{j}_q$ and $j_l=\hat{j}_l$, $l < q$. For definitiveness, suppose $\hat{j}_q>j_q$. We expand, as before,
\begin{equation}\label{Giexp}
 p_{k,i} =  \TisG{i}{1}{q-1} \circ \left( \Lambda_{j_{q-1},G}^{-1} \circ  \Lambda_{j_q,G} \right) \circ \left[ G_{j_q} \circ \TisG{i}{q+1}{m} \circ \Lambda_{j_m,G}^{-1} \circ \Lambda_{k,G}\right](p),
\end{equation}
and similarly for $G^{\hat{i}}_*$. Our immediate goal will be to show that $\arrowvert p_{k,i}-s_{k,\hat{i}} \arrowvert$ and  $\arrowvert p_{k,i}^*-s^*_{k,\hat{i}}\arrowvert$ are commensurate. To this end we will show that the distances between the images of points $p$,$s$ and  $p^*$,$s^*$ under the consecutive application of the three maps  $ \TisG{i}{1}{q-1}$, $(\ldots)$ and $\{\ldots\}$ in $(\ref{Giexp})$ stay commensurate. We will perform this in three steps.

\medskip

\noindent {\it \underline{Step (1)}}. Both 
$$\bar{p}_q\equiv
\left[ G_{j_q} \circ \TisG{i}{q+1}{m} \circ \Lambda_{j_m,G}^{-1} \circ \Lambda_{k,G}\right](p) \quad {\rm and} \quad \bar{p}^*_q \equiv \left[ G_* \circ \Tis{i}{q+1}{m} \circ \Lambda^{k-j_m}_*\right](p^*)
$$ 
lie in $\cE_2 \cup \cE_4$. We use $(\ref{LambdaDiff*})$ in the following bound
\begin{eqnarray}\label{pqDiff}
\nonumber |\bar{p}_q-\bar{p}^*_q  |&\le&\left|  G_* \circ \Tis{i}{q+1}{m}  \circ \Lambda_{j_m,G}^{-1}  \circ \Lambda_{k,G}(p) - G_* \circ \Tis{i}{q+1}{m} \circ  \Lambda^{k-j_m}_*(p^*) \right| \\
\nonumber &+&\left|  G_{j_q} \circ \TisG{i}{q+1}{m}  \circ \Lambda_{j_m,G}^{-1} \circ \Lambda_{k,G}(p)-   G_* \circ \Tis{i}{q+1}{m} \circ \Lambda_{j_m,G}^{-1} \circ \Lambda_{k,G}(p) \right| \\
\nonumber &\le &\| D \bG\|_{\cE_1 \cup \cE_3}  A^{m-q} |\lambda_-|^{j_m-j_{q} -(m-q)} \times \\
\nonumber & \times & \left[C(\varrho) \, |\lambda_*|^{k-j_m} \omega^{k} +c_8(\varrho) \omega^{j_m} |\lambda_*|^{k-j_m}\right]+c_9(\varrho) \omega^{j_q}\\
 &\le& c_{10}(\varrho) \,\omega^{j_q}.
\end{eqnarray}
Similarly, $\bar{s}_q$ and $\bar{s}_q^*$ are in $\cE_2 \cup \cE_4$, and $|\bar{s}_q-\bar{s}^*_q  | \le c_{10}(\varrho) \, \omega^{\hat{j}_q}$.

\medskip

\noindent{\it \underline{Step (2)}}. Next, denote,

\medskip

\begin{tabular}{l l l}
$\hat{p}_q \equiv \Lambda_{j_{q-1},G}^{-1} \circ  \Lambda_{j_q,G} (\bar{p}_q)$, & $\hat{p}_q^* \equiv \Lambda^{j_q-j_{q-1}}_* (\bar{p}_q^*)$, & $\tilde{p}_q \equiv \Lambda_*^{j_q-j_{q-1}}(\bar{p}_q)$,\\
$\hat{s}_q \equiv \Lambda_{j_{q-1},G}^{-1} \circ  \Lambda_{\hat{j}_q,G} (\bar{s}_q)$, & $\hat{s}_q^* \equiv \Lambda^{\hat{j}_q-j_{q-1}}_* (\bar{s}_q^*)$, & $\tilde{s}_q \equiv \Lambda_*^{\hat{j}_q-j_{q-1}}(\bar{s}_q)$.
\end{tabular}

\medskip

 According to  Lemma $\ref{separation}$ the sets $\cE_2$ and $\cE_4$ are horizontally separated from their rescalings, therefore,  
\begin{equation}\label{lower_bound}
\arrowvert \bar{p}_q-\Lambda_{j_{q},G}^{-1} \circ \Lambda_{\hat{j}_{q},G}(\bar{s}_q) \arrowvert \ge \inf_{p \in \cE_2 \cup \cE_4
}|\cP_x p| -|\lambda_-| \,  \sup_{p \in \cE_2 \cup \cE_4}|\cP_x p|  \equiv {\rm \delta_1}.
\end{equation}
Clearly, there is also a constant $\delta_2$, such that 
$$\delta_2 > \arrowvert \bar{p}_q-\Lambda_{j_{q},G}^{-1} \circ \Lambda_{\hat{j}_{q},G}(\bar{s}_q) \arrowvert.$$

One can now use a computation similar to $(\ref{LambdaDiff})$, to show that there exists a $c_{11}(\varrho)$, such that 
$$\arrowvert \hat{p}_q - \hat{s}_q \arrowvert = \left| \Lambda_{j_{q-1},G}^{-1} \circ  \Lambda_{j_q,G}\left(\bar{p}_q-\Lambda_{j_{q},G}^{-1} \circ \Lambda_{\hat{j}_{q},G}(\bar{s}_q)\right) \right|$$ 
satisfies 
\begin{equation}\label{commens_bound}
\delta_2  (1+c_{11}(\varrho)\,\omega^{j_{q-1}}) |\lambda_*|^{j_q-j_{q-1}} \ge\arrowvert \hat{p}_q - \hat{s}_q \arrowvert \ge  \delta_1  (1-c_{11}(\varrho)\,\omega^{j_{q-1}}) |\lambda_*|^{j_q-j_{q-1}},  
\end{equation}
and similarly for $\arrowvert \hat{p}_q^* - \hat{s}_q^* \arrowvert$. We use estimates $(\ref{LambdaDiff})$ and  $(\ref{pqDiff})$ to compare these two distances: 
\begin{eqnarray}\label{long_comp}
\nonumber \arrowvert \hat{p}_q - \hat{s}_q \arrowvert  &\le& \arrowvert \hat{p}_q^* - \hat{s}_q^* \arrowvert+\arrowvert \hat{p}_q - \tilde{p}_q  \arrowvert+\arrowvert \tilde{p}_q\!-\! \hat{p}_q^*  \arrowvert +\arrowvert \hat{s}_q - \tilde{s}_q  \arrowvert +\arrowvert \tilde{s}_q - \hat{s}_q^*  \arrowvert \\
\nonumber &\le& \arrowvert \hat{p}_q^* -\hat{s}_q^* \arrowvert+ c_7(\varrho) |\lambda_*|^{j_q-j_{q-1}} \omega^{j_{q-1}}|\bar{p}_q|+|\lambda_*|^{j_q-j_{q-1}} |\bar{p}_q^*-\bar{p}_q|\\
\nonumber &+&c_7(\varrho) |\lambda_*|^{\hat{j}_q-j_{q-1}} \omega^{j_{q-1}}|\bar{s}_q|+|\lambda_*|^{\hat{j}_q-j_{q-1}} |\bar{s}_q^*-\bar{s}_q|\\
\nonumber &\le& \arrowvert \hat{p}_q^* - \hat{s}_q^* \arrowvert+ c_7(\varrho) | \lambda_*|^{j_q-j_{q-1}}  \omega^{j_{q-1}}(|\bar{p}_q|+|\bar{s}_q|) \\ 
\nonumber &+&2 \, c_{10}(\varrho) |\lambda_*|^{j_q-j_{q-1}} \omega^{j_q}
\end{eqnarray}
Therefore, there exists a $c_{12}(\varrho)$, such that
$$ \arrowvert \hat{p}_q -\hat{s}_q \arrowvert \le \arrowvert \hat{p}_q^* - \hat{s}_q^* \arrowvert+ c_{12}(\varrho) |\lambda_*|^{j_q-j_{q-1}} \, \omega^{j_{q-1}},$$
and $\varrho$ can be chosen sufficiently small, so that, for instance, 
$$c_{12}(\varrho) < {1 / 2} \cdot \delta_1  (1-c_{11}(\varrho)\,\omega^{j_{q-1}}),$$ then
$${ \arrowvert \hat{p}_q - \hat{s}_q \arrowvert \over \arrowvert \hat{p}_q^* - \hat{s}_q^* \arrowvert} \le 1+{ c_{12}(\varrho) |\lambda_*|^{j_q-j_{q-1}} \, \omega^{j_{q-1}} \over  \delta_1  (1\!-\!c_{11}(\varrho)\,\omega^{j_{q-1}}) |\lambda_*|^{j_q-j_{q-1}}} \le {3 \over 2}.$$

One can use a similar argument to show that $\varrho$ can be chosen sufficiently small, so that $ \arrowvert \hat{p}_q^* - \hat{s}_q^* \arrowvert / \arrowvert \hat{p}_q - \hat{s}_q \arrowvert$ is also bounded from above by a constant. In particular, if $q=1$, then 
$\arrowvert \hat{p}_q^* - \hat{s}_q^* \arrowvert=|p_{k,i}^*-s_{k,\hat{i}}^*|$ and $\arrowvert \hat{p}_q - \hat{s}_q \arrowvert=|p_{k,i}-s_{k,\hat{i}}|$, and 
$$ \arrowvert\iHkG(p_{k,i}^*)-\!\!\phantom{a}_{\hat{i}}H_{k,G}(s_{k,\hat{i}}^*)\arrowvert= |p_{k,i}-s_{k,\hat{i}}| \asymp |p_{k,i}^*-s_{k,\hat{i}}^*|.$$

\medskip 

\noindent{\it \underline{Step (3)}}. Suppose that $q>1$. We will now demonstrate that 
%\begin{eqnarray}\label{Gcommens}
%\nonumber \arrowvert \TisG{i}{1}{q-1}(\hat{p}_q)-\TisG{\hat{i}}{1}{q-1}(\hat{s}_q)\arrowvert &\le& (1+c_{13}(\varrho) \omega^{j_1}) \,\arrowvert \Tis{i}{1}{q-1}(\hat{p}_q)-\Tis{\hat{i}}{1}{q-1}(\hat{s}_q)\arrowvert\\
%\nonumber \arrowvert \Tis{i}{1}{q-1}(\hat{p}_q)-\Tis{\hat{i}}{1}{q-1}(\hat{s}_q)\arrowvert &\le& (1+c_{13}(\varrho) \omega^{j_1}) \,\arrowvert \TisG{i}{1}{q-1}(\hat{p}_q)-\TisG{\hat{i}}{1}{q-1}(\hat{s}_q)\arrowvert,
%\end{eqnarray}
%i.e.
\begin{equation}
\arrowvert \Tis{i}{1}{q-1}(\hat{p}_q)-\Tis{\hat{i}}{1}{q-1}(\hat{s}_q)\arrowvert \asymp |p_{k,i}-s_{k,\hat{i}}|.
\end{equation}

 Denote for brevity
$${\rm Diff}_l= \left( \TisG{i}{j_{l+1}}{j_{q-1}}(\hat{p}_q)-\TisG{\hat{i}}{j_{l+1}}{j_{q-1}}(\hat{s}_q)\right)-\left( \Tis{i}{j_{l+1}}{j_{q-1}}(\hat{p}_q)-\Tis{\hat{i}}{j_{l+1}}{j_{q-1}}(\hat{s}_q)\right).$$

First, recall that $G_*-G_{j_m}=O(\omega^{j_m})$, and so is the difference of derivatives at any point contained in a compact subset of the domain $\cD_3$.  A straightforward calculation gives for any such point $t$:
\begin{eqnarray}
\label{inv1}D T_{j_m,j_{m-1},G}(t) \cdot D T_{j_m-j_{m-1}}(t)^{-1}&=&{\bf O}(\omega^{j_{m-1}}),\\
\label{inv2}D T_{j_1,j_{0},G}(t) \cdot D T_{j_1}(t)^{-1}&=&{\bf O}(\varrho),
\end{eqnarray}
where ${\bf O}(\delta)$ signifies a $2 \times 2$ matrix whose elements are $O(\delta)$.

Next, 
\begin{eqnarray}
\nonumber {\rm Diff}_{q-2}&=&\left( D \TisG{i}{j_{q-1}}{j_{q-1}}(t^G_{q-2}) - D \Tis{i}{j_{q-1}}{j_{q-1}}(t^*_{q-2}) \right) \cdot (\hat{p}_q -\hat{s}_q)\\
\nonumber &=& \left( D T_{j_{q-1},j_{q-2},G}(t^G_{q-2}) - D T_{j_{q-1}-j_{q-2}}(t^*_{q-2}) \right) \cdot (\hat{p}_q -\hat{s}_q),
\end{eqnarray}
where the derivatives are evaluated at some points $t^G_{q-2}$ and $t^*_{q-2}$  on the line segment between $\hat{p}_q$ and $\hat{s}_q$ (Mean Value Theorem). Again, it is straightforward to demonstrate that $t^G_{q-2}-t^*_{q-2}=O(\omega^{j_{q-2}})$, therefore
\begin{eqnarray}
\nonumber{\rm Diff}_{q-2}&=& \left(D \TisG{i}{j_{q-1}}{j_{q-1}}(t^G_{q-2}) \cdot D \Tis{i}{j_{q-1}}{j_{q-1}}(t^*_{q-2})^{-1} -I\right) \cdot  D \Tis{i}{j_{q-1}}{j_{q-1}}(t^*_{q-2}) \cdot (\hat{p}_q -\hat{s}_q)\\
\nonumber &=&{\bf O}(\omega^{j_{q-2}}) \cdot  D \Tis{i}{j_{q-1}}{j_{q-1}}(t^*_{q-2}) \cdot (\hat{p}_q -\hat{s}_q)\\
\label{Diffq_2} &=&{\bf O}(\omega^{j_{q-2}}) \cdot  \left( \Tis{i}{j_{q-1}}{j_{q-1}}(\hat{p}_q)-\Tis{\hat{i}}{j_{q-1}}{j_{q-1}}(\hat{s}_q)\right).
\end{eqnarray}

Assume,  that for some $2 \le  l<q-2$,
$${\rm Diff}_{l}={\bf O}(\omega^{j_{l}}) \cdot \left( \Tis{i}{j_{l+1}}{j_{q-1}}(\hat{p}_q)-\Tis{\hat{i}}{j_{l+1}}{j_{q-1}}(\hat{s}_q)\right).$$
Then
\begin{eqnarray}
\nonumber{\rm Diff}_{l-1}&=&\left( T_{j_{l},j_{l-1},G} \circ \TisG{i}{j_{l+1}}{j_{q-1}}(\hat{p}_q)-T_{j_{l},j_{l-1},G} \circ \TisG{\hat{i}}{j_{l+1}}{j_{q-1}}(\hat{s}_q)\right)\\
\nonumber &\phantom{=}&-\left( T_{j_{l},j_{l-1}} \circ \Tis{i}{j_{l+1}}{j_{q-1}}(\hat{p}_q)-T_{j_{l},j_{l-1}} \circ \Tis{\hat{i}}{j_{l+1}}{j_{q-1}}(\hat{s}_q)\right)\\
\nonumber&=&D T_{j_{l},j_{l-1},G}  (\tilde{t}^G_{l-1}) \cdot \left(  \TisG{i}{j_{l+1}}{j_{q-1}}(\hat{p}_q)- \TisG{\hat{i}}{j_{l+1}}{j_{q-1}}(\hat{s}_q)\right)\\
\nonumber &\phantom{=}&- D T_{j_{l},j_{l-1}}  (\tilde{t}^*_{l-1}) \cdot \left(\Tis{i}{j_{l}+1}{j_{q-1}}(\hat{p}_q)- \Tis{\hat{i}}{j_{l+1}}{j_{q-1}}(\hat{s}_q)\right)\\
\nonumber&=&\left\{ D T_{j_{l},j_{l-1},G} (\tilde{t}^G_{l-1}) \cdot \left(  \Tis{i}{j_{l+1}}{j_{q-1}}(\hat{p}_q)- \Tis{\hat{i}}{j_{l+1}}{j_{q-1}}(\hat{s}_q)\right)\right.\\
\nonumber &\phantom{=}&\left.- D T_{j_{l},j_{l-1}} (\tilde{t}^*_{l-1}) \cdot \left(  \Tis{i}{j_{l+1}}{j_{q-1}}(\hat{p}_q)- \Tis{\hat{i}}{j_{l+1}}{j_{q-1}}(\hat{s}_q)\right) \right\}\\
\nonumber&\phantom{=}&+\left[D T_{j_{l},j_{l-1},G} (\tilde{t}^G_{l-1}) \cdot \left(  \TisG{i}{j_{l+1}}{j_{q-1}}(\hat{p}_q)- \TisG{\hat{i}}{j_{l+1}}{j_{q-1}}(\hat{s}_q)\right)\right.\\
\nonumber &\phantom{=}&\left.- D T_{j_{l},j_{l-1},G} (\tilde{t}^G_{l-1}) \cdot \left(\Tis{i}{j_{l+1}}{j_{q-1}}(\hat{p}_q)- \Tis{\hat{i}}{j_{l+1}}{j_{q-1}}(\hat{s}_q)\right)\right],
\end{eqnarray}
where $\tilde{t}^G_{l-1}$ is some point on the line segment connecting $\TisG{i}{j_{l+1}}{j_{q-1}}(\hat{p}_q)$ and $\TisG{\hat{i}}{j_{l+1}}{j_{q-1}}(\hat{s}_q)$, while  $\tilde{t}^*_{l-1}$ is a point on the line segment between $\Tis{i}{j_{l+1}}{j_{q-1}}(\hat{p}_q)$ and $\Tis{\hat{i}}{j_{l+1}}{j_{q-1}}(\hat{s}_q)$.''

 We treat expression $\{\ldots\}$ and $[ \ldots]$ separately. The first one is worked out similarly to $(\ref{Diffq_2})$:
\begin{eqnarray}
\nonumber\left\{\ldots \right\}&=&{\bf O}(\omega^{j_{l-1}}) \cdot D T_{j_{l},j_{l-1}} (\tilde{t}^*_{l-1})  \cdot \left(  \Tis{i}{j_{l+1}}{j_{q-1}}(\hat{p}_q)- \Tis{\hat{i}}{j_{l+1}}{j_{q-1}}(\hat{s}_q)\right)\\
\nonumber&=&{\bf O}(\omega^{j_{l-1}})  \cdot \left(  \Tis{i}{j_{l}}{j_{q-1}}(\hat{p}_q)- \Tis{\hat{i}}{j_{l}}{j_{q-1}}(\hat{s}_q)\right).
\end{eqnarray}

\begin{eqnarray}
\nonumber\left[\ldots \right]&=&D T_{j_{l},j_{l-1},G} (\tilde{t}^G_{l-1})  \cdot {\rm Diff_l}=(I+{\bf O}(\omega^{j_{l-1}}) ) \cdot D T_{j_{l},j_{l-1}} (\tilde{t}^*_{l-1})  \cdot {\rm Diff_l}\\
\nonumber&=&(I+{\bf O}(\omega^{j_{l-1}}))  \cdot D T_{j_{l},j_{l-1}} (\tilde{t}^*_{l-1}) \cdot {\bf O}(\omega^{j_{l}}) \cdot  \left( \Tis{i}{j_{l+1}}{j_{q-1}}(\hat{p}_q)-\Tis{\hat{i}}{j_{l+1}}{j_{q-1}}(\hat{s}_q)\right) \\
\nonumber&=&(I+{\bf O}(\omega^{j_{l-1}})) \cdot {\bf O}(\omega^{j_{l}}) \cdot  D T_{j_{l},j_{l-1}} (\tilde{t}^*_{l-1}) \cdot  \left( \Tis{i}{j_{l+1}}{j_{q-1}}(\hat{p}_q)-\Tis{\hat{i}}{j_{l+1}}{j_{q-1}}(\hat{s}_q)\right) \\
\nonumber&=&(I+{\bf O}(\omega^{j_{l-1}}))  \cdot {\bf O}(\omega^{j_{l}}) \cdot \left(  \Tis{i}{j_{l}}{j_{q-1}} (\hat{p}_q)- \Tis{\hat{i}}{j_{l}}{j_{q-1}}(\hat{s}_q)\right). 
\end{eqnarray}

Therefore, 
$${\rm Diff}_{l-1}={\bf O}(\omega^{j_{l-1}}) \cdot  \left( \Tis{i}{j_{l}}{j_{q-1}}(\hat{p}_q)-\Tis{\hat{i}}{j_{l}}{j_{q-1}}(\hat{s}_q)\right),$$
which completes the induction. In particular, when $l=1$, we obtain
$$\left( \TisG{i}{j_{1}}{j_{q-1}}(\hat{p}_q)-\TisG{\hat{i}}{j_{1}}{j_{q-1}}(\hat{s}_q)\right)=(I+{\bf O}(\varrho)) \cdot \left(  \Tis{i}{j_{1}}{j_{q-1}}(\hat{p}_q)- \Tis{\hat{i}}{j_{1}}{j_{q-1}}(\hat{s}_q)\right),$$
where we have used $(\ref{inv2})$, or
\begin{equation}\label{Gcommens}
 \arrowvert \TisG{i}{1}{q-1}(\hat{p}_q)-\TisG{\hat{i}}{1}{q-1}(\hat{s}_q)\arrowvert \le (1+c_{13}(\varrho)) \,\arrowvert \Tis{i}{1}{q-1}(\hat{p}_q)-\Tis{\hat{i}}{1}{q-1}(\hat{s}_q)\arrowvert.
\end{equation}
for some ``constant'' $c_{13}$, $\lim_{\varrho \rightarrow 0}c_{13}(\varrho)=0$.

On the other hand, if we interchange the roles of $G$ and $G_*$ in the computation above, we get
$$ \arrowvert \Tis{i}{1}{q-1}(\hat{p}_q)-\Tis{\hat{i}}{1}{q-1}(\hat{s}_q)\arrowvert \le (1+c_{13}(\varrho)) \,\arrowvert \TisG{i}{1}{q-1}(\hat{p}_q)-\TisG{\hat{i}}{1}{q-1}(\hat{s}_q)\arrowvert.$$

\noindent{\it \underline{Step (4)}}.  Finally, we  demonstrate that 
$$|p_{k,i}-s_{k,\hat{i}}|= \arrowvert \TisG{i}{1}{q-1}(\hat{p}_q)-\TisG{\hat{i}}{1}{q-1}(\hat{s}_q)\arrowvert$$ 
and 
$$|p_{k,i}^*-s_{k,\hat{i}}^*|=|\Tis{i}{1}{q-1}(\hat{p}_q^*)-\Tis{\hat{i}}{1}{q-1}(\hat{s}_q^*)|$$
are commensurate.

We will compare 
$$I_1 \equiv \arrowvert \Tis{i}{1}{q-1}(\hat{p}^*_q)-\Tis{\hat{i}}{1}{q-1}(\hat{s}^*_q)\arrowvert$$ 
and 
$$I_2 \equiv \arrowvert \Tis{i}{1}{q-1}(\hat{p}_q)-\Tis{\hat{i}}{1}{q-1}(\hat{s}_q)\arrowvert.$$
Since $\Tis{i}{1}{q-1}=\Tis{\hat{i}}{1}{q-1}$,
\begin{equation}\label{upper_lower}
 b^{q-1} \mu_*^{j_{q-1}-(q-1)} |\hat{s}^*_q-\hat{p}^*_q|  \le   I_1 \le A^{q-1} |\lambda_*|^{j_{q-1}-(q-1)} |\hat{s}^*_q-\hat{p}^*_q|,
\end{equation}
and 
\begin{eqnarray}
\nonumber I_2 &\le& I_1+  \arrowvert \Tis{i}{1}{q-1}(\hat{p}^*_q)-\Tis{i}{1}{q-1}(\hat{p}_q)\arrowvert + \arrowvert \Tis{\hat{i}}{1}{q-1}(\hat{s}^*_q)-\Tis{\hat{i}}{1}{q-1}(\hat{s}_q)\arrowvert\\
\nonumber &\le& I_1+  A^{q-1} |\lambda_*|^{j_{q-1}-(q-1)} \left( |\hat{p}^*_q-\hat{p}_q| + |\hat{s}^*_q-\hat{s}_q)| \right)\\
\nonumber  &\le& I_1+  c_{12}(\varrho) \, A^{q-1} |\lambda_*|^{j_{q}-(q-1)} \, \omega^{j_{q-1}}.
\end{eqnarray}

These two estimates put together with the estimate $(\ref{commens_bound})$ result in the following bound:
\begin{eqnarray}
\nonumber { I_2 \over I_1}  &\le& 1+  c_{12}(\varrho) { A^{q-1} |\lambda_*|^{j_{q}-(q-1)}  \, \omega^{j_{q-1}} \over  b^{q-1} \mu_*^{j_{q-1}-(q-1)} |\hat{p}^*_q-\hat{s}^*_q|  } \omega^{j_{q-1}}  \\
\nonumber &\le&  1+   {c_{12}(\varrho) \over  \delta_1  (1-c_{11}(\varrho)\,\omega^{j_{q-1}}) } { A^{q-1} |\lambda_*|^{j_{q}-(q-1)} \over  b^{q-1} \mu_*^{j_{q-1}-(q-1)} |\lambda_*|^{j_q-j_{q-1}} } \omega^{j_{q-1}} \\
\nonumber &\le&  1+  {c_{12}(\varrho) \over  \delta_1  (1-c_{11}(\varrho)\,\omega^{j_{q-1}}) } \left( { A \mu_* \over b |\lambda_*|} \right)^{q-1} \left[ {|\lambda_*| \over \mu_* } \right] ^{j_{q-1}}  \omega^{j_{q-1}} \\
\nonumber &\le& \left\{  1+  {c_{12}(\varrho) \over  \delta_1  (1-c_{11}(\varrho)\,\omega^{j_{q-1}}) } \left( { A \mu_* \over b |\lambda_*|} \right)^{j_{q-1}} \left[ {|\lambda_*| \over \mu_* } \right] ^{j_{q-1}}  \omega^{j_{q-1}}, \quad { A \mu_* \over b |\lambda_*|} >1  \atop    1+  {c_{12}(\varrho) \over  \delta_1  (1-c_{11}(\varrho)\,\omega^{j_{q-1}}) } \left[ {|\lambda_*| \over \mu_* } \right] ^{j_{q-1}}  \omega^{j_{q-1}}, \quad { A \mu_* \over b |\lambda_*|} \le 1   \right.\\
\label{Imp_Bound} &\le&  1+  {c_{12}(\varrho) \over  \delta_1  (1-c_{11}(\varrho)\,\omega^{j_{q-1}}) }  \max \left\{{|\lambda_*| \over \mu_*},  {A \over b} \right\}^{j_{q-1}}  \omega^{j_{q-1}}.
\end{eqnarray}
and similarly for $I_1/I_2$. Therefore, if 
$$\omega \le \min \left\{  {\mu_* \over |\lambda_*|},  {b \over A} \right\},$$ then
\begin{equation}\label{bi-Lips}
 \arrowvert\iHkG(p_{k,i}^*)-\!\!\phantom{a}_{\hat{i}}H_{k,G}(s_{k,\hat{i}}^*)\arrowvert= |p_{k,i}-s_{k,\hat{i}}| \asymp I_2 \asymp I_1 = |p_{k,i}^*-s_{k,\hat{i}}^*|.
\end{equation}
\end{proof}

\begin{remark}
We would like to emphasize that the commensurability property $(\ref{bi-Lips})$ holds only for $i \ne \hat{i}$, and therefore does not imply that the hyperbolic sets $\cC^k_G$ and $\cC^k_*$ are bi-Lipschitz conjugate. In case $i=\hat{i}$ a positive lower bound $(\ref{lower_bound})$ does not exist, which would invalidate the arguments that follow.
\end{remark}

%-------------------------------------------------------------------------------------------------------------------------

%\section{Upper bound on the Hausdorff dimension of the stable invariant set}

%We will now derive bounds on the Hausdorff dimension of the set
%$\cC_G^\infty$.

%\begin{prop}\label{limitHDabove}
% For all $F \in \bW(\varrho)$,  the Hausdorff dimension of the limit set
%$\cC^\infty_G$ admits the following upper bound:
%\begin{equation}
%{\rm dim}_H(\cC^\infty_G) < 0.836.
%\end{equation}
%\end{prop}
%\begin{proof}

%According to Theorem $\ref{convergence}$
%\begin{eqnarray}
%\nonumber {\rm diam}(G^i(\Lambda_{k,G}(\cC_{G_k})) \cup G^i(\Lambda_{k+1,{G}}(\cC_{G_{k+1}})) &<& {\rm const} \, \theta^k,\\
%\nonumber {\rm diam}(G^i(\Lambda_{k,G}(\cC_{G_k})) \cup G^{i+2^k}(\Lambda_{k+1,{G}}(\cC_{G_{k+1}})) &<& {\rm const} \, \theta^k,
%\end{eqnarray}
%and
%$$d_H(\cC_G^k,\cC_G^\infty) < {\rm const} {\theta^k \over 1-\theta}.$$   

%This implies that for each $k \in \fN$ there is a cover $\cB_k$ of $\cC^k_G$ with $2^k$ open balls $B_{k,i} \supset G^i(\Lambda_{k,G}(\cC_{G_k}))$ of radius ${\rm const} \, \theta^k/( 1-\theta)$ such that $\cB_k \supset \cC_G^\infty$. Therefore, for every $k \in \fN$
%$$C_d^H(\cC^\infty_G) < {\rm const'} \, 2^k \, \theta^{d k},$$
%and the Hausdorff dimension of $\cC^\infty_G$ is less than
%$$d=-{\log(2)\over \log(\theta)} < 0.8351.$$
%\end{proof}

\medskip

\section{Some concluding remarks}

We have demonstrated that the Hausdorff dimension of the stable set for the maps $F$ in the subset $\bW_\omega(\varrho)$ of the infinitely renormalizable maps is independent of $F$, and that the stable dynamics for two infinitely renormalizable maps in  $\bW_\omega(\varrho)$ is bi-Lipschitz-conjugate.  This is quite weaker than the corresponding  result about the invariance of the Hausdorff dimension of the Feigenbaum attractor for all infinitely renormalizable unimodal maps (see \cite{Pal,Rand,McM1,dMP}). On the other hand, it does demonstrate that one should expect at least some kind of rigidity of invariant sets for infinitely renormalizable maps in conservative dynamics  ---  rigidity which was absent in dissipative maps (see \cite{dCLM}).

Our proof of the bi-Lipschitz property of the conjugacy between stable sets $\cC_G^\infty$ and $\cC_{\tilde{G}}^\infty$ balances two phenomena that, in a sense, work against each other: convergence of renormalizations of maps $G \in \bW(\varrho)$  versus the fact that the rates of contraction of distances in different directions by maps $\Lambda_F$ and $\Lambda_F \circ F$ are essentially different. A careful look at the proof shows that  the bi-Lipschitz property is achieved if the convergence rate $\nu$ is sufficiently small to ``counteract'' the relative size of contractions. However, this is not the case with the upper bound $(\ref{contr_rate})$ on $\nu$ at hand. Although this upper bound is by no means sharp, it does indicate that one might need to choose a submanifold $\bW_\omega(\varrho)$ of $\bW(\varrho)$ on which the convergence rate is smaller.

%This also gives an indication that it is unlikely that rigidity holds for all infinitely renormalizable maps: 
%the spectral radius of the restriction to the stable subspace of the derivative of the renormalization operator at the fixed point is $0.55$, while the relative size of the contractions in the scaling transformation $\Lambda_*$ (or, of the scaling transformation in any other coordinate system) is $|\lambda_*| / \mu_* \approx 0.25$.

%It would be very interesting to explore if the bi-Lipschitz property of conjugacies and the invariance of the Hausdorff dimension of the stable set hold for all infinitely renormalizable maps. %This question is somewhat analogous to the one posed in \cite{dCLM}

Another obvious issue for investigation is whether the bi-Lipschitz conjugacy of the stable sets extends to their neighborhood as a $C^{1+\epsilon}$ map. Again, this is the case for the conjugacies between attractors of the unimodal maps (see \cite{Rand,McM1,dMP}), and it is not for very dissipative maps where, as we have already mentioned, the regularity of the conjugacy of attractors for two maps $F$ and $\tilde{F}$ has a definite upper bound $(\ref{reg_bound})$.

\medskip

%---------------------------------------------------------------------------------------------------------------------------------------------------------------------------------------
\section{Acknowledgments}

The authors would like to thank Hans Koch for his many useful 
insights into period-doubling in area-preserving maps, as well as for 
his great help with understanding the original computer assisted proof 
\cite{EKW2}.

%-------------------------------------------------------------------------------------------------------------------------------------------

%---------------------------------------------------------------------------------------------------------------------------------------------------------------------------------------

%---------------------------------------------------------------------------------------------------------------------------------------------------------------------------------------
%---------------------------------------------------------------------------------------------------------------------------------------------------------------------------------------

\end{document}